\newcommand{\eps}{\varepsilon}
\newcommand{\R}{\mathbb{R}}
\newcommand{\Q}{\mathbb{Q}}
\newcommand{\C}{\mathbb{C}}
\newcommand{\N}{\mathbb{N}}
\newcommand{\Z}{\mathbb{Z}}
\newcommand{\es}[1]{\begin{equation}\begin{split}#1\end{split}\end{equation}}
\newcommand{\est}[1]{\begin{equation*}\begin{split}#1\end{split}\end{equation*}}
\newcommand{\B}{\mathrm{B}}
\renewcommand{\P}{\mathbb{P}}
\newcommand{\tn}[1]{\textnormal{#1}}
\renewcommand{\mod}[1]{~\pr{\textnormal{mod}~#1}}
\newtheorem*{theo*}{Theorem}
\newtheorem{theo}{Theorem}
\newtheorem{lemma}{Lemma}
\newtheorem{corol}[lemma]{Corollary}
\newtheorem{remark}{Remark}
\newtheorem*{rem*}{Remark}
\def\sumstar{\operatornamewithlimits{\sum\nolimits^*}}
\def\sumprime{\operatornamewithlimits{\sum\nolimits^\prime}}
\def\sumdagger{\operatornamewithlimits{\sum\nolimits^\dagger}}
\newcommand{\pr}[1]{\left( #1\right)}
\newcommand{\e}[1]{\operatorname{e}\pr{ #1}}
\newcommand{\cc}{\operatorname{c}}
\newcommand{\Res}{\operatorname*{Res}}
\newcommand{\sign}{\operatorname*{sign}}
\newcommand{\balpha}{\boldsymbol\alpha}
\newcommand{\bbeta}{\boldsymbol\beta}
\newcommand{\bgamma}{\boldsymbol\gamma}
\newcommand{\bdelta}{\boldsymbol\delta}
\newcommand{\bv}{\boldsymbol{v}}
\newcommand{\bu}{\boldsymbol{u}}
\newcommand{\bx}{\boldsymbol{x}}
\newcommand{\by}{\boldsymbol{y}}
\newcommand{\bt}{\boldsymbol{t}}
\newcommand{\bz}{\boldsymbol{z}}
\newcommand{\ba}{\boldsymbol{a}}
\newcommand{\bxi}{\boldsymbol{\xi}}
\newcommand{\bnu}{\boldsymbol{\nu}}
\newcommand{\bzero}{\boldsymbol{0}}
\newcommand{\bw}{\boldsymbol{w}}
\let\originalleft\left
\let\originalright\right
\renewcommand{\left}{\mathopen{}\mathclose\bgroup\originalleft}
\renewcommand{\right}{\aftergroup\egroup\originalright}
\numberwithin{equation}{section}
\newcommand{\subjclass}[2][2010]{%
  \let\@oldtitle\@title%
  \gdef\@title{\@oldtitle\footnotetext{#1 \emph{Mathematics subject classification.} #2}}%
}
\newcommand{\keywords}[1]{%
  \let\@@oldtitle\@title%
  \gdef\@title{\@@oldtitle\footnotetext{\emph{Key words and phrases.} #1.}}%
}
\newcommand{\addresses}{{
  \bigskip
  \footnotesize

  S.~Bettin, \textsc{Dipartimento di Matematica, Universit\`a di Genova; via Dodecaneso 35; 16146 Genova; Italy. 
}\par\nopagebreak
  \textit{E-mail address}: \texttt{bettin@dima.unige.it}

 }}
\begin{document}

\author{Sandro Bettin}
\title{Linear correlations of the divisor function}
\date{}
\subjclass[2010]{11M41, 11N37, 11D72 (primary)}
\keywords{Divisor sums, Bihomogeneous varieties, Manin's conjecture}

\maketitle

\begin{abstract}
Motivated by arithmetic applications on the number of points in a bihomogeneous variety and on moments of Dirichlet $L$-functions, we provide analytic continuation for the series $\mathcal A_{\ba}(s):=\sum_{n_1,\dots,n_k\geq1}\frac{d(n_1)\cdots d(n_k)}{(n_1\cdots n_k)^{s}}$ with the sum restricted to solutions of a non-trivial linear equation $a_1n_1+\cdots+a_kn_k=0$. The series  $\mathcal A_{\ba}(s)$ converges absolutely for $\Re(s)>1-\frac1k$ and we show it can be meromorphically continued to $\Re(s)>1-\frac 2{k+1}$ with poles at $s=1-\frac1{k-j}$ only, for $1\leq j< (k-1)/2$.

As an application, we obtain an asymptotic formula with power saving error term for the number of points in the variety  $a_1x_1y_1+\cdots+a_kx_ky_k=0$ in $\mathbb P^{k-1}(\Q)\times \mathbb P^{k-1}(\Q)$.
\end{abstract}

\section{Introduction}

Motivated by some applications which we shall describe below,  we consider the Dirichlet series $\mathcal A_{\ba}(s)$ obtained by adding a linear constraints among the variables of summation when expanding $(\zeta(s)^2)^k$ into a product of $k$ Dirichlet series. More precisely, for $k\in\N_{\geq2}$, $\ba=(a_1\dots,a_k)\in\Z_{\neq0}^k$, and $\Re(s)>1-\frac1k$ we define $\mathcal A_{\ba}(s)$ to be
\es{\label{sds}
\mathcal A_{\ba}(s):=\sum_{\substack{n_1,\dots,n_k\geq1,\\
a_1n_1+\cdots+a_kn_k=0
}}\frac{d(n_1)\cdots d(n_k)}{(n_1\cdots n_k)^{s}}=\sum_{n\geq1}\frac{h_{\ba}(n)}{n^s},
}
where $d(n)$ is the number of divisors of $n$ and where $h_{\ba}(n)$ is defined implicitly by the second identity. Notice we can assume that $a_1,\dots,a_k$ don't all have the same sign, since otherwise $\mathcal A_{\ba}(s)=0$.
The function $\mathcal A_{\ba}(s)$ can be regarded as a degree $2$ analogue of 
\est{
\mathcal S_{\ba}(s):=\sum_{\substack{n_1,\dots,n_k\geq1,\\
a_1n_1+\cdots+a_kn_k=0
}}\frac{1}{(n_1\cdots n_k)^{s}}.
}
This function is a particular case of the Shintani zeta-function, which was investigated in a series of works by Shintani (see, e.g.~\cite{Shi76,Shi78}). In particular, he showed that $\mathcal S_{\ba}(s)$ admits a meromorphic continuation to $\C$ and studied its special values displaying a connection with the values at $s=1$ of Hecke $L$-function of totally real fields.

The value at $s=1$ of the function $\mathcal A_{\ba}(s)$ also has an arithmetic interpretation. Indeed, in~\cite{Bet15} it  was shown that  $\mathcal A_{\ba}(1)$ appears as the leading constant for the moments of a ``cotangent sum'' related to the Nyman-Beurling criterion for the Riemann hypothesis. More specifically, it was shown that as $q\to\infty$
\est{
\sum_{\substack{h=1,\\(h,q)=1}}^{q-1}\cc_0(h/q)^{2k}\sim \frac{q^{2k}}{\pi^{4k}}\sum_{\epsilon\in \{\pm1\}^k}(-1)^{k-\#\{1\leq i\leq 2k\mid \epsilon_i=1\}}\mathcal A_{\epsilon}(1)
}
where $\cc_0(h/q):=-\sum_{m=1}^q\frac{m}{q}\cot(\frac{\pi m h}q)$. We defer to~\cite{Bag,BC13a,BC13b} for more details on $\cc_0$ and on its relation with the Nyman-Beurling criterion. Also, we remark that the asymptotic for the moments of $\cc_0(h/q)$ was previously computed in~\cite{MR16a} with a different expression for the leading constant.

In this paper we are interested in the analytic continuation of $\mathcal A_{\ba}(s)$.
For $k=2$ it is very easy to analytically continue $\mathcal A_{\ba}(s)$ to a meromorphic function on $\C$. Indeed, for $a_1,a_2\in\N$ one has has
\est{
\mathcal A_{\ba}(s)=\sum_{\substack{n_1,n_2\geq1,\\ a_1n_1=a_2n_2}}\frac{d(n_1)d(n_2)}{(n_1n_2)^s}=\sum_{\substack{n\geq1}}\frac{d(a_1n)d(a_2n)}{(abn^2)^s}=\eta_{a_1,a_2}(s)\sum_{\substack{n\geq1}}\frac{d(n)^2}{n^{2s}}=\eta_{a_1,a_2}(s)\frac{\zeta(2s)^4}{\zeta(4s)}
}
by Ramanujan's identity, where $\eta_{a_1,a_2}(s)$ is a certain arithmetic factor which is meromorphic in $\C$ with poles all located  on the line $\Re(s)=0$. 
In the case $k\geq3$ the coefficients $h_{\ba}(n)$ in~\eqref{sds} are no longer multiplicative and the problem of providing meromorphic continuation for $\mathcal A_{\ba}(s)$ becomes significantly harder, but we are still able to enlarge the domain of holomorphicity of  $\mathcal A_{\ba}(s)$ to  $\Re(s)>1-\frac{2}{k+1}$. 

\begin{theo}\label{mtmc}
Let $k\geq3$, $\ba=(a_1\dots,a_k)\in\Z_{\neq0}^k$ with $a_1,\dots,a_k$ not all with the same sign. Then, $\mathcal A_{\ba}(s)$ admits meromorphic continuation to $\Re(s)>1-\frac{2}{k+1}$. 
More precisely, there exist $c_{m,j}(\ba)\in\R$ for $ \frac{k+2}2\leq m\leq k$, $1\leq j\leq m+1$ such that 
\est{
\mathcal F_{\ba}(s):=\mathcal A_{\ba}(s)-\sum_{\frac{k+2}2\leq m\leq k}\bigg(\frac{c_{m,m+1}(\ba)}{(s-(1-\tfrac1m))^{1+m}}+\cdots+\frac{c_{m,1}(\ba)}{s-(1-\tfrac1m)}\bigg)
}
is holomorphic on  $\Re(s)>1-\frac{2}{k+1}$. Moreover, there exists an absolute constant $A>0$ such that  for $\Re(s)>1-\frac{2-\eps}{k+1}$ one has $\mathcal F_{\ba}(s)\ll_{\eps} \pr{ (\frac k\eps \max_{m=1}^k|a_m|)^A  (1+|s|)^7}^{Ak^2(1-1/k-\sigma)+k\eps}$, where the implicit constant depends on $\eps$ only.  
\end{theo}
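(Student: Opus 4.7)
My plan starts from an integral representation of the linear constraint via orthogonality of additive characters. Since each inner Dirichlet series converges absolutely for $\Re(s)>1$, we have in this range
\[
\mathcal A_{\ba}(s)=\int_{0}^{1}\prod_{i=1}^{k}D_{a_i}(s,\alpha)\,d\alpha,\qquad D_{a}(s,\alpha):=\sum_{n\ge 1}\frac{d(n)\,\e{an\alpha}}{n^{s}}.
\]
(One may equivalently substitute $d(n_i)=\sum_{n_i=e_if_i}1$ and recast the original sum over the points of the bihomogeneous variety $\sum a_ix_iy_i=0$, obtaining for each fixed $\mathbf d$ a Shintani-type zeta series in $\mathbf e$ that inherits Shintani's meromorphic continuation; but the circle-method representation above gives a more direct route to the poles.) At a reduced fraction $\alpha=h/q$, $D_{a}(s,h/q)$ is a normalisation of Estermann's twisted divisor series, known to be meromorphic on $\C$ with a double pole only at $s=1$ and to satisfy a Voronoi-type functional equation of degree $2$ whose coefficients are Ramanujan/Kloosterman-type sums.

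The plan is a circle-method decomposition of the $\alpha$-integral. I would fix parameters $Q=Q(\sigma),\delta=\delta(\sigma)$ depending on $\sigma=\Re(s)$, introduce major arcs $\mathfrak M(h,q)=\{h/q+\eta:|\eta|\le\delta/q\}$ for $q\le Q$ and $(h,q)=1$, and set $\mathfrak m=[0,1]\setminus\bigcup\mathfrak M(h,q)$. The main local input is an Estermann--Voronoi expansion of $D_{a}(s,h/q+\eta)$ for small $|\eta|$: its dominant part is a finite sum of terms of shape $(q|\eta|)^{\gamma(s)}\,\Gamma(1-s)^{2}$ times arithmetic factors, plus a holomorphic remainder. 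Substituting this into each factor on $\mathfrak M(h,q)$ and expanding the product by subsets $S\subseteq\{1,\dots,k\}$ recording which $D_{a_i}$ contribute a singular piece, the level-$m$ term ($|S|=m$) reduces after integration in $\eta$ to an expression in $s$ with simple poles at the zeros of a linear form in $s$. The combinatorics, combined with the log-power contributions from the $m$ Gamma factors, identifies these as poles at $s=1-\tfrac{1}{m}$ of total order $m+1$; only the levels $\tfrac{k+2}{2}\le m\le k$ lie in the target half-plane $\Re(s)>1-\tfrac{2}{k+1}$, and the residues assemble into the coefficients $c_{m,j}(\ba)$ upon summation over $(h,q)$.

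To complete the continuation, one must show that the residue---the integral over $\mathfrak m$ together with the higher-order holomorphic remainders from the Voronoi expansion on major arcs---is holomorphic on $\Re(s)>1-\tfrac{2}{k+1}$ with polynomial growth. For $\alpha\in\mathfrak m$ a Dirichlet approximation provides $\alpha=h/q+\eta$ with $q>Q$ and $|\eta|\le 1/(qQ)$; applying the Voronoi functional equation at this rational approximant furnishes a pointwise convexity-type bound $|D_{a_i}(s,\alpha)|\ll((1+|s|)\max_i|a_i|\,q)^{A}$, from which the remaining integral is estimated. Optimising $Q$ and $\delta$ against $\sigma$ then yields both holomorphicity and the bound on $\mathcal F_{\ba}(s)$ claimed in the theorem.

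The principal obstacle will be the major-arc analysis: executing the Voronoi expansion of $D_{a_i}(s,h/q+\eta)$ to sufficient depth uniformly in $(h,q)$, and carrying out the bookkeeping so that precisely the asserted poles arise---with no spurious poles surviving the sum over $(h,q)$ or appearing from the interaction between the $\eta$-integration and the $s$-dependent exponents. A delicate secondary issue is the quantitative uniformity: tracking the Stirling growth of the iterated Gamma factors and the size of the Kloosterman sums uniformly in $(q,\ba,|s|)$, so as to produce the stated polynomial bound in $(1+|s|)^{7}$.
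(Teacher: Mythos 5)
Your high-level picture (Estermann/Voronoi expansions producing poles at $s=1-\frac1m$ of order $m+1$, with only $m>\frac{k+1}2$ landing in the target half-plane, and an arithmetic error term to be controlled) matches the paper, and the author explicitly acknowledges that a circle-method route is possible. But as written your proposal has a genuine gap at the minor arcs, and more generally off the rationals. The identity $\mathcal A_{\ba}(s)=\int_0^1\prod_i D_{a_i}(s,\alpha)\,d\alpha$ is only valid for $\Re(s)>1$, and for irrational $\alpha$ (or for $\alpha=h/q+\eta$ with $\eta\neq0$) the series $D_a(s,\alpha)$ has no analytic continuation past $\Re(s)=1$ that you can invoke: the Estermann function is meromorphic on $\C$ only at rational arguments. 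Consequently the ``pointwise convexity-type bound $|D_{a_i}(s,\alpha)|\ll((1+|s|)\max_i|a_i|q)^A$'' you use on $\mathfrak m$ does not exist for $\sigma\leq 1$, and the holomorphy of $\int_{\mathfrak m}$ in $\Re(s)>1-\frac{2}{k+1}$ — which is exactly what the continuation argument needs — is not established. The same problem infects the major arcs: the expansion of $D_a(s,h/q+\eta)$ in $\eta$ must be carried out while the series still converges, and the resulting $\eta$-integrals must be shown to define holomorphic functions of $s$ in the extended region, which is not automatic.

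The standard repair is to localize the summation variables first, so that all exponential sums are finite (smooth and dyadic), and only then apply harmonic analysis; this is precisely what the paper does. Its route is: partitions of unity in each $n_i$; when one variable dominates, a trivial bound; otherwise, elimination of the largest variable via the linear relation, a Ramanujan-sum identity for $\delta_{a|n}\sigma_\gamma(n/a)$ (a ``delta-method'' summed over all moduli $\ell$, which replaces your major/minor dichotomy for the arithmetic part), a Mellin-type separation of $(a_2n_2+\cdots+a_kn_k)^{v}$, and then the Estermann functional equation variable by variable; the polar terms are reassembled by contour shifts, and the whole analysis is run for shifted divisor functions $\tau_{\alpha_i,\beta_i}$ so that the order-$(m+1)$ poles arise cleanly as a confluence of simple poles when $\balpha,\bbeta\to\bzero$. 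The exponent $\frac{2}{k+1}$ then emerges from balancing the ``comparable sizes'' error against the ``one large variable'' error, and the uniformity in $k$ (needed for the stated bound with implied constant depending only on $\eps$) requires separate care (the paper's Lemma~\ref{absc}). So your outline identifies the right local input, but the missing idea — truncation/localization before the circle method — is not a technicality: without it the objects you integrate over $\mathfrak m$ are undefined in the region where you need them, and repairing this essentially forces the paper's actual architecture.
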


Notice that Theorem~\ref{mtmc} is uniform in $k,\ba$ and $s$. We remark that the uniformity in $k$ of some bounds for $\mathcal A_{\ba}(s)$ at $s=1$ was crucial in the works~\cite{Bet15} and~\cite{MR16b} and, in general, it is also needed for our application~\cite{Bet}.

The value of the arithmetic factors $c_{m,j}(\ba)$ can be computed explicitly starting from equation~\eqref{dacc1} below. In particular, for $m=k$, $j=k+1$ one has 
\es{\label{mcost}
c_{k,k+1}(\ba)
= \frac{\rho(\ba)}{|a_1\cdots a_k|^{\frac1k}} \frac{k! }{k^{k+1}} \frac{\zeta(k-1)}{\zeta(k)}
\frac{ \Gamma(\tfrac1k)^k}{\Gamma(\tfrac rk)\Gamma(1-\tfrac rk)},
}
where $r$ is the number of $a_i$ which are positive and $\rho(\ba)$ is as defined in~\eqref{arco}; in particular if $\tn{GCD}(a_1,\dots,a_k)=1$ then $1\ll \rho(\ba)\ll_\eps |a_1\cdots a_k|^\eps$. Also, for $1\leq j\leq k$ one has that $c_{k,j}(\ba)$ can be written in terms of an arithmetic factor of shape similar to $\rho(\ba)$ times an expression depending on Euler's constant $\gamma$, the derivatives of $\zeta(s-1)$ and $\zeta(s)$ computed at $k$, and the derivatives of the $\Gamma$-function computed at $\frac1k$, $\frac{r}k$ and $1-\frac rk$ (cf. equation~\eqref{dacc2}). To give an explicit numerical example for $k=3$, $\ba=(-1,1,1)$, we have
\est{
c_{3,4}\approx 0.537228,\quad c_{3,3}\approx 0.554669,\quad c_{3,2}\approx 1.689055, \quad c_{3,1}\approx -64.704169.
}

An interesting question left open by Theorem~\ref{mtmc} is whether $\mathcal A_{\ba}(s)$ extends meromorphically further to the left, perhaps to a meromorphic function on $\C$ with poles at $s=1-\frac1m$ for all $1\leq m\leq k$, or whether it has a natural boundary. As an approach to this problem one could try to input a recursive argument into the proof of Theorem~\ref{mtmc}.
We notice however that the expression for the coefficients $c_{m,j}$ arising in the proof of Theorem~\ref{mtmc} does not visibly extend to a meaningful formula in the case $m\leq \frac12(\sqrt{4k+1}+1)$ for $k>3$ ($m=1$ if $k=3$), thus suggesting these coefficients might change form at some point or perhaps casting doubts on the possibility of a meromorphic continuation of $\mathcal A_{\ba}(s)$ to $\C$. Finally we mention that numerical computations in the case $k=3$ suggest there is a pole also at the subsequent expected location $s=\frac12$ (i.e. a term of order $P(\log X)X^{\frac32}$ in~\eqref{k3} below), however the computations do not clarify whether the corresponding coefficients have the same shape of the previous coefficients or not. 

Our first application of Theorem~\ref{mtmc} is given in the following Corollary.

\begin{corol}\label{mtmcc}
Let $k\geq3$ and $\ba\in\Z_{\neq0}^k$. Let $\Phi(x)$ be a smooth function with support in $[-1,1]$ and such that $\phi^{(j)}(x)\ll j^{Bj}$ for some $B>0$ and all $x\in\R$. Then, for $ \frac{k+1}2<i\leq k$ there exist polynomials $P_{\ba,i}(x)\in\R[x]$ of degree $i$ such that for all $X\geq 1$
\es{\label{mtmcce}
&\sum_{\substack{n_1,\dots,n_k\geq1,\\ a_1n_1+\cdots+a_kn_k=0}}\hspace{-1.5em}d(n_1)\cdots d(n_k)\Phi\pr{\frac{n_1\cdots n_k}{X^k}}\\[-1em]
&\hspace{10em}=\sum_{\frac{k+2}2\leq i\leq k}P_{\ba,i}(\log X)X^{k-\frac{k}{i}}+O_{\eps,\Phi}\big(\big( \tfrac {k^B}\eps  \max_{i=1}^k|a_i|\big)^{Ak} X^{k-2+\frac{ 2+\eps}{k+1}} \big),
}
for some absolute constant $A>0$.
\end{corol}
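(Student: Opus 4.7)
The plan is to apply Mellin inversion. Writing $\Phi$ in terms of its Mellin transform $\widetilde\Phi(s):=\int_0^\infty\Phi(y)y^{s-1}\,dy$, one obtains, for any $\sigma_0>1-\frac1k$, the identity
\est{
\sum_{\substack{n_1,\dots,n_k\geq1,\\ a_1n_1+\cdots+a_kn_k=0}}d(n_1)\cdots d(n_k)\,\Phi\pr{\frac{n_1\cdots n_k}{X^k}}=\frac1{2\pi i}\int_{(\sigma_0)}\widetilde\Phi(s)X^{ks}\mathcal A_{\ba}(s)\,ds,
}
where the interchange of sum and integral is justified by the absolute convergence of $\mathcal A_{\ba}(s)$ on $\Re(s)>1-\frac1k$ and the fast decay of $\widetilde\Phi$ on vertical lines. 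I would then shift the contour leftward to the line $\Re(s)=\sigma:=1-\frac{2}{k+1}+\frac{\eps}{k(k+1)}$, which in the notation of Theorem~\ref{mtmc} is $\Re(s)=1-\frac{2-\eps/k}{k+1}$.

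By Theorem~\ref{mtmc}, the only singularities crossed are the poles of $\mathcal A_{\ba}(s)$ at $s=1-\frac1m$ for $\frac{k+2}2\le m\le k$, each of order $m+1$ with explicit principal part $\sum_{j=1}^{m+1} c_{m,j}(\ba)(s-(1-\tfrac1m))^{-j}$. Since $X^{ks}=X^{k-k/m}\exp(k(\log X)(s-(1-\tfrac1m)))$ near such a pole, the residue formula
\est{
\Res_{s=1-\frac1m}\widetilde\Phi(s)X^{ks}\mathcal A_{\ba}(s)=\sum_{j=1}^{m+1}\frac{c_{m,j}(\ba)}{(j-1)!}\frac{d^{j-1}}{ds^{j-1}}\bigl(\widetilde\Phi(s)X^{ks}\bigr)\Big|_{s=1-\frac1m}
}
produces a main term of the form $P_{\ba,m}(\log X)X^{k-k/m}$ with $P_{\ba,m}$ a polynomial of degree $m$ in $\log X$, the maximal power $(\log X)^m$ arising from the $c_{m,m+1}(\ba)$ contribution.

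For the shifted integral, I would bound $\widetilde\Phi(\sigma+it)$ by iterated integration by parts: from the identity
\est{
\widetilde\Phi(s)=\frac{(-1)^N}{s(s+1)\cdots(s+N-1)}\int_0^\infty\Phi^{(N)}(y)y^{s+N-1}\,dy
}
and the hypothesis $\Phi^{(j)}(y)\ll j^{Bj}$, one obtains $|\widetilde\Phi(\sigma+it)|\ll((BN)^B/|t|)^N$ for every $N\ge 1$; optimizing $N\asymp|t|^{1/B}$ produces sub-exponential decay in $|t|$ which absorbs the polynomial-in-$|s|$ growth of $\mathcal A_{\ba}(\sigma+it)$ supplied by Theorem~\ref{mtmc}. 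The remaining integral is then $\ll X^{k\sigma}=X^{k-2+(2+\eps)/(k+1)}$, with an implicit constant of the claimed form $(k^B\max_i|a_i|/\eps)^{Ak}$ arising from the dependence on $\ba,k,\eps$ in Theorem~\ref{mtmc}.

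The main technical difficulty is uniform parameter balancing: the growth bound in Theorem~\ref{mtmc} degenerates as $\Re(s)\to 1-\frac{2}{k+1}$, so to keep the error of the stated shape $X^{k-2+(2+\eps)/(k+1)}$ uniformly in $k$ one must apply Theorem~\ref{mtmc} with $\eps$ replaced by $\eps/k$ and then carefully track the resulting polynomial-in-$k$ and polynomial-in-$\max_i|a_i|$ factors through both the residue computation and the contour estimate.
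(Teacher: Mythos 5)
Your proposal is correct and is essentially the intended deduction: the paper presents Corollary~\ref{mtmcc} as an immediate application of Theorem~\ref{mtmc}, and the Mellin inversion, contour shift to $\Re(s)=1-\frac{2-\eps/k}{k+1}$ collecting the residues at $s=1-\frac1m$, sub-exponential decay of $\widetilde\Phi$ from the hypothesis $\Phi^{(j)}\ll j^{Bj}$, and the replacement of $\eps$ by $\eps/k$ to match the stated error exponent are exactly the steps required. The only (shared with the paper) caveat is that the degree of $P_{\ba,i}$ equals $i$ only when $c_{i,i+1}(\ba)\widetilde\Phi(1-\tfrac1i)\neq0$.
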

To give two examples, in the cases $k=3$ and $k=4$ (with $a_1=-1$) Corollary~\ref{mtmcc} gives 
\begin{align}
&\begin{aligned}
&\sum_{n_1,n_2\geq1}\hspace{-0.2em}d(n_1)d(n_2) d(an_1+bn_2)\Phi\pr{{n_1n_2(an_1+ bn_2)}/{X^3}}\\[-0.8em]
&\hspace{18.5em}=Q_{3,a,b}(\log X)X^{2}+O_{\eps,\Phi}\big(\big(a+b\big)^{A} X^{\frac 3{2}+\eps} \big),
\end{aligned}\label{k3}\\
&\begin{aligned}
&\hspace{-0.5em}\sum_{n_1,n_2,n_3\geq1}\hspace{-0.2em}d(n_1)d(n_2)d(n_3) d(an_1+bn_2+cn_3)\Phi\pr{{n_1n_2(an_1+ bn_k+cn_3)}/{X^3}}\\[-0.8em]
&\hspace{7.5em}=R_{4,a,b,c}(\log X)X^{3}+R_{3,a,b,c}(\log X)X^{\frac 83}+O_{\eps,\Phi}\big(\big(a+b+c\big)^{A} X^{\frac {12}{5}+\eps} \big).
\end{aligned}\label{k4}
\end{align}
for any $a,b,c\in\N$ and where $Q_{3,a,b}(x),R_{3,a,b,c}(x)$ and $R_{4,a,b,c}(x)$ are polynomials of degree $3,3$ and $4$ respectively and $A$ is an absolute constant.

We remark that one could use an easier argument to give the leading term in the asymptotic for the left hand side of~\eqref{mtmcce}. In fact the main difficulty of Corollary~\ref{mtmcc} lies in unravelling the complicated combinatorics required to obtain the full main term $P_{\ba,i}(\log X)X^{k-1}$. This difficulties are implicitly treated in Theorem~\ref{mtmc}, which allows us to go even further than the full main term. Indeed, for $k\geq4$ we are able to identify also some new terms whose order is a power smaller than the main term (cf.~\eqref{k4}). This is an example of an arithmetic stratification, where one has other ``main terms'', coming from sub-varieties, of order  (typically) different from the main term one expects from the variety under consideration. This phenomenon was discussed by Manin and Tschinkel~\cite{MT} and explored in the context of the circle method by Vaughan and Wooley in the Appendix of~\cite{VW}. Recently, the arithmetic stratification was also indicated by Wooley as a potential source for the various terms in the Conrey-Keating analyis~\cite{CK} of the  asymptotic for moments of the Riemann zeta-function.  In our case, the lower order contribution could be explained as  coming from affine sub-varieties, that is solutions of $n_1m_1+\cdots+n_km_k=0$ which also satisfy one or more other equations $r_1n_1m_1+\dots+r_kn_km_k=r_0$ for some ``small'' $r_0,\dots,r_k\in\Z$. 

A result similar to~\eqref{k3}, with the significant difference in the different way of counting, was obtained by Browning~\cite{Bro}. He computed the asymptotic with power-saving error term for 
\es{\label{eqdes}
\sum_{n_1,n_2< X}d(L(n_1,n_2))d(L_2(n_1,n_2))d(L_3(n_1,n_2)),
}
where $L_1,L_2,L_3\in\Z[x_1,x_2]$ are linearly independent linear forms. He also considered~\eqref{eqdes} when the sums are unbalanced, i.e.\mbox{} where the sum is restricted to $n_1\leq N_1$, $n_2\leq N_2$ with  $N_2$ smaller than $N_1$. He was able to prove the asymptotic as long as $N_1^{3/4+\eps}\leq N_2\leq N_1$  (for $L_1(n_1,n_2)=n_1-n_2$, $L_1(n_1,n_2)=n_1$, $L_3(n_1,n_2)=n_1+n_2$), a range that was recently enlarged by Blomer~\cite{Blo} who was able to consider the case $N_1^{1/3+\eps}\leq N_2\leq N_1$ (with a smooth cut-off for $n_2$). In a different direction, we also cite the work of Browning and de la Bret\`eche~\cite{dlBB}, who considered the case $k=3$ with a quadratic relation among the variables.

For larger values of $k$, we cite the important work of Matthiesen~\cite{Mat} who considered a variation of~\eqref{mtmcce} as well as the more general case when one has more than one linear constraint. Her work differs from ours in that in her case the variables vary inside a convex set, whereas in our case the variables are essentially summed over the hyperbolic region. Also, her method is based on the Green-Tao transference principle~\cite{GT} which can only give the leading term $cX^{k-1}(\log X)^k$ in the asymptotic formula.  
In particular we notice that neither the work of Matthiesen nor those of Browning and Blomer were able to produce terms of order a lower power of $X$.

Before introducing our second application, we first mention that we shall actually prove a more general version of Theorem~\ref{mtmc} where shifts are introduced, i.e. where instead of each divisor function $d(n)$ we have $\tau_{\alpha_i,\beta_i}(n):=\sum_{ab=n}a^{-\alpha_i}b^{-\beta_i}$ with $\alpha_i,\beta_i\in\C$. We defer to Theorem~\ref{mtmt} in Section~\ref{shifts} for the complete statement. The shifts make our result extremely  flexible. In particular one can use it to count integer solutions $(x_1,\dots,x_k,y_1,\dots,y_k)$ in the flag variety 
\es{\label{darc}
a_1x_1y_1+\cdots+a_kx_ky_k=0
}
when ordered according to various possible choices of height. To give a specific example we take the anticanonical height $(\max_{i}|x_i|\cdot \max_j |y_j|)^{k-1}$, verifying Manin's conjecture in this particular case.

\begin{theo}\label{sect}
Let $k\in\N_{\geq3}$. For $\bx\in\mathbb P^{k-1}(\Q)$, let $(x_1,\dots,x_k)$ be a representative of $\bx$ such that $x_1,\dots,x_k\in\Z$  and $(x_1,\dots,x_k)=1$. Let $H:\mathbb P^{k-1}(\Q)\to\R_{>0}$ be defined by $H(\bx):=(\max_{1\leq i\leq k}|x_i|)^{{k-1}}$ and let $\ba=(a_1,\dots,a_k)\in\Z_{\neq0}^{k}$. Then,
\est{
N(B)&:=\#\bigg\{(\bx,\by)\in \mathbb P^{k-1}(\Q)\times \mathbb P^{k-1}(\Q)\bigg|
\begin{aligned}
&a_1x_1y_1+\cdots+a_kx_ky_k=0,\, H(\bx)H(\by)<B\\
&x_1\cdots x_k\cdot y_1\cdots y_k\neq0
\end{aligned}
\bigg\}\\
&=\Big(\frac{\mathfrak S(\ba)}{k-1} \sum_{i=1}^k\sigma_i(\ba)\Big) B\log B+f(\ba)B+O_{k,\eps,\ba}(B^{1-\frac{k-2}{2 k (15 k-2)}+\eps}),
}
for some explicitly computable $f(\ba)\in\R$, and
\es{\label{fsda}
\mathfrak S(\ba)&:=\frac12\sum_{ \ell\geq1}\frac{\prod_{i=1}^{k}(a_i,\ell)}{\ell^{k}}\frac{\varphi(\ell)}{\zeta(k)^2},\\
\sigma_i(\ba)&:=\Big(\prod_{i=2}^k\int_{-1}^1\int_{-1}^1\Big) \chi_{(-|a_i|,|a_i|)}\Big(\sum_{j=1,\, j\neq i}^ka_ix_iy_i\Big)\prod_{j=1,\, j\neq i}^kdx_idy_i,
}
where $\chi_{X}$ is the characteristic function of the set $X$.
\end{theo}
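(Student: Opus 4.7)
The plan is to reduce $N(B)$ to a multiple Mellin--Barnes integral involving the shifted Dirichlet series $\mathcal A_{\ba,\balpha,\bbeta}(s)$ of Theorem \ref{mtmt}, extract the main term by residue calculus, and use the polynomial bounds of Theorem \ref{mtmc} to control the remainder.

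First I would pass from projective to affine. Each $\bx\in\mathbb P^{k-1}(\Q)$ has exactly two primitive integer representatives, so $4N(B)$ equals the number of primitive pairs $(\bx,\by)\in(\Z\setminus\{0\})^{2k}$ with $\sum a_ix_iy_i=0$ and $M(\bx)M(\by)<T:=B^{1/(k-1)}$, where $M(\bu):=\max_i|u_i|$. Möbius inversion disposes of the primitivity, and a decomposition by the signs $\eta_i:=\sgn(u_iv_i)\in\{\pm1\}$, which absorbs into $\ba$ so that the equation becomes $\sum(a_i\eta_i)|u_i||v_i|=0$, reduces the problem to counting $(\bu,\bv)\in\N^{2k}$ satisfying $\sum a'_iu_iv_i=0$ with $M(\bu)M(\bv)<T/(d_1d_2)$, for sign-twisted $\ba'$ and Möbius parameters $d_1,d_2$.

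To handle the joint max constraint I would smooth it to $\Psi(M(\bu)M(\bv)d_1d_2/T)$ with a suitable cut-off $\Psi$, split by the ordered pair $(i,j)$ of indices realising the two maxima, and enforce the orderings $u_l\leq u_i$ and $v_m\leq v_j$ through the Mellin identity $\chi\{x\leq 1\}=\frac{1}{2\pi i}\int_{(c)}x^{-s}\,s^{-1}ds$ ($c>0$). After also Mellin-transforming $\Psi$ in its argument, the $(i,j)$-contribution becomes a $(2k-1)$-fold Mellin--Barnes integral in auxiliary variables $(w,\boldsymbol s,\boldsymbol r)$ whose innermost object is $\mathcal A_{\ba',\balpha,\bbeta}(s_\star)$ at an arbitrary $\Re(s_\star)>1-\tfrac1k$, with the shifts $(\balpha,\bbeta)$ chosen linearly in $(w,\boldsymbol s,\boldsymbol r)$ so that the Euler-factor weights $u_l^{-\alpha_l-s_\star}$ and $v_l^{-\beta_l-s_\star}$ reproduce the desired integrand.

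The main term comes from pushing the $w$-contour past $\Re(w)=k-1$, equivalently crossing the pole of $\mathcal A_{\ba',\balpha,\bbeta}$ at $s=1-\tfrac1k$ described by \eqref{mcost}. For generic auxiliary shifts this pole is simple, and summing its residue over $(i,j)$, the sign patterns $\eta$, and $d_1,d_2$ assembles the arithmetic factor $\mathfrak S(\ba)$ of \eqref{fsda} (from the Möbius sum coupled with $\rho(\ba')$) together with the archimedean period $\sum_i\sigma_i(\ba)$, obtained from the $(\boldsymbol s,\boldsymbol r)$-integrals as the auxiliary shifts are sent to $0$ and they collapse to the real integrals in \eqref{fsda}. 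A subleading term in the Laurent expansion of $\widehat\Psi(w)(T/(d_1d_2))^w$ at $w=k-1$ produces the $f(\ba)B$ contribution. The new contour eventually sits on $\Re(s)=1-\tfrac{2-\eps}{k+1}$; combining the polynomial bounds on $\mathcal F_{\ba',\balpha,\bbeta}$ from Theorem \ref{mtmc}, truncation of the vertical and multiple Mellin integrals, and optimisation of the smoothing width in $\Psi$ yields the claimed $B^{1-\frac{k-2}{2k(15k-2)}+\eps}$ error. The principal obstacle lies in managing this $(2k-1)$-fold contour: several families of poles, coming from $\mathcal A_{\ba',\balpha,\bbeta}$ and from the Mellin kernels, must be crossed in a coherent order, and the residue must be shown to tend to a finite limit matching $\mathfrak S(\ba)\sum_i\sigma_i(\ba)$ exactly as the auxiliary shifts degenerate.
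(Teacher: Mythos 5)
Your architecture coincides with the paper's: reduce to a primitive affine count (dividing by $4$ and twisting $\ba$ by signs), remove primitivity by M\"obius, split according to which coordinates realise the maxima, encode the resulting inequalities by (truncated) Perron/Mellin integrals so that the inner sum becomes the shifted series $\mathcal A^*_{\ba;\balpha,\bbeta}$, apply Theorem~\ref{mtmt}, extract residues, and optimise the truncation parameter. The paper uses a sharp truncated Perron kernel (Theorem G of~\cite{Ing}) rather than a smooth cut-off $\Psi$, which forces it to bound the $1/(T\log|x/y|)$ off-cut terms by a separate elementary argument, but that is a matter of taste and either route can be made to work.

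There is, however, a genuine gap in your treatment of the main term. Your mechanism --- a simple pole of $\mathcal A^*_{\ba';\balpha,\bbeta}$ at $s=1-\tfrac1k$ whose residue ``tends to a finite limit matching $\mathfrak S(\ba)\sum_i\sigma_i(\ba)$ as the auxiliary shifts degenerate'', plus a subleading Laurent coefficient of $\widehat\Psi$ --- can only produce terms of the form $cB+f(\ba)B$. It cannot produce the leading term $\big(\tfrac{\mathfrak S(\ba)}{k-1}\sum_i\sigma_i(\ba)\big)B\log B$, and your proposal never says where the logarithm comes from. The point is that the residue of the polar term of $\mathcal A^*$ does \emph{not} tend to a finite limit as the shifts go to zero: in the paper's computation the residue in the first Perron variable lands on a locus where the remaining integrand has the shape $\frac{\zeta(1-u)}{u}B^{-cu}$, i.e.\ the pole of $\mathcal A^*$ collides with poles of the Mellin kernels and of the $\zeta(1-\alpha^*_i+\beta^*_i)$ factors, creating a genuine double pole in the degenerate limit; the individual residues diverge like $1/\xi$ against a factor $B^{\xi}$, and only their combination has a limit, namely $cB\log B+c'B$. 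Any correct execution of your plan must track this collision explicitly (e.g.\ by keeping the shifts generic, computing the divergent residues, and pairing them before passing to the limit); asserting a finite limit for the residue is not merely an omission but contradicts the nonvanishing of the $B\log B$ coefficient. A secondary point: after the shift to $\Re(s)=1-\tfrac{2-\eps}{k+1}$ you also cross the poles at $s=1-\tfrac1m$ for $\tfrac{k+2}{2}\le m<k$ (the summands of $\mathcal M_{\ba;\balpha,\bbeta}$ with $|\mathcal I|=m<k$); these contribute lower powers of $B$ and must be bounded, which the paper does but your sketch does not mention.
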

We remark that we made no effort to optimize the power saving $\delta_k$ which could be easily improved by refining our method (in particular focusing more on the shift dependency in Theorem~\ref{mtmt}).

The variety~\eqref{darc} has already been considered in several papers. In particular, we mention the works of Robbiani~\cite{Rob} (for $k\geq3$), Spencer~\cite{Spe}, Browning~\cite{Bro} (for $k=2$), and Blomer and Br\"udern~\cite{BB1} (for more general multihomogeneous diagonal equations) and, previously, by Franke, Manin and Tschinzel~\cite{FMT} and Thunder~\cite{Thu} (with height function $\|x\|^{{k-1}} \|y\|^{{k-1}}$)\footnote{Here $\|\cdot\|$ is the Euclidean norm and $\bx=(x_1,\dots,x_k)$, $\by=(y_1,\dots,y_k)$} in the more general setting of Fano varieties. Among all these works, the only ones where the full main term, with error term $O(B^{1-\delta})$,  is obtained are~\cite{FMT} (see the Corollary after Theorem 5) and~\cite{BB1} (in~\cite{BB2} the explicit value $\delta=\frac18$ was obtained for $k=3$). Theorem~\ref{sect} thus gives an alternative proof of this result as well as providing an explicit power saving for all $k$. Also, another novelty in our approach is that it shows that also complex analytic methods can be used to tackle these problems.

We notice that Theorem~\ref{sect} appears very similar to Corollary~\ref{mtmcc}, which essentially counts points in~\eqref{darc} ordering them according to the size of the product of all the variables, $|x_1\cdots x_k\cdot y_1\cdots y_k|$.\footnote{One could easily modify our argument to count points with respect to $\max_i|x_iy_i|$, since our proof starts by introducing partitions of unity which localize each $x_iy_i$.} The different way of counting however changes the problem significantly and the deduction of Theorem~\ref{sect} from (the generalization of) Theorem~\ref{mtmc} is much subtler. In particular, the computation of the full main term for $N(B)$ requires a careful analysis of some complex integrals resulting from integrating over the shifts in Theorem~\ref{mtmt}. Notice that also in this case the problem becomes much easier if one only computes the leading term in the asymptotic for $N(B)$.

A third application of Theorem~\ref{mtmc} comes from the theory of the moments of $L$-functions. In~\cite{Bet} it is considered the moment
\est{
M_k(q):=\sumprime_{\chi_1,\dots,\chi_{k-1}\!\mod q}|L(\tfrac12,\chi_1)|^2\cdots |L(\tfrac12,\chi_{k-1})|^2|L(\tfrac12,\chi_1\cdots\chi_{k-1})|^2,
}
where $\sumprime$ indicates that the sum is over primitive characters $\chi_1,\dots,\chi_{k-1}$ modulo $q$ and $L(s,\chi)$ is the Dirichlet $L$-function associated to the character $\chi$. It turns out that the ``diagonal term'' in $M_k(q)$ has the shape $\sum_{\epsilon \in \{\pm1\}^k}c_\epsilon\int_{(2)}\mathcal A_{\epsilon}(s)q^{ks}H(s)\,{ds}$ for a meromorphic function $H(s)$ and some $c_\epsilon\in\R$. Thanks to~Theorem~\ref{mtmc} we are able to evaluate the diagonal term and thus, evaluating also the off-diagonal term using a similar method, we are able to obtain the following asymptotic formula for $M_k(q)$ when $k\geq3$  (the case $k=2$ corresponds to the $4$-th moment of Dirichlet $L$-function and was computed by Young~\cite{You})
\es{\label{asfm}
M_{k}(q)=\varphi(q)^{k-1}\sum_{n=1}^{\infty}\frac{2^{\nu(n)}}{n^{\frac k2}} \pr{(\log\tfrac q{8n\pi })^{k}+(-\pi)^{k}}+O_\eps\pr{k^{Ak}q^{-\delta_k+\eps}},
}
where $\varphi(n)$ is Euler's $\phi$ function,  $\nu(n)$ is the number of different prime factors of $n$ and $\delta_k>0$. We also mention that, thanks to the work~\cite{Bet16},~\eqref{asfm} can be interpreted also as the moment of some functions involving continued fractions.

The proof of Theorem~\ref{mtmc} is quite simple in spirit but it has to face a number of technical challenges, mainly coming from the identification of the polar structure (equivalently, of the main terms in~\eqref{mtmcce}).  Before giving a brief outline of our proof, we mention that one could have chosen to proceed also in different ways, for example using the circle method. The main difficulty however comes from the evaluation of the polar structure and this is not visibly simplified by choosing such different routes. We also remark that the our technique would allow to give analytic continuation also when the constraint is a non-homogeneous linear equation. 
The only difference with our case is that in Lemma~\ref{mlb} below we would need to use the Deshouillers and Iwaniec~\cite{DI} bound for sums of Kloosterman sums  (cf.~\cite{Bet} where this is done for a similar problem). However, for simplicity we content ourself with dealing with the homogenous case only. 

We conclude with a rough sketch of the proof of Theorem~\ref{mtmc} referring for simplicity to Corollary~\ref{mtmcc} which is essentially equivalent to it.
First, we split the sum on the right hand side of~\eqref{mtmcce} introducing partitions of unity to control the size of the $n_i$. When one variable is much larger than the others a simple bound suffices, so we are left with considering the case when the variables have about the same size. In this case we eliminate the larger variable using the linear equation and we separate the remaining variables arithmetically and analytically using, respectively, a slightly modified version of Ramanujan's formula,
\es{\label{rffm}
\sigma_{1-s}(m)=\zeta(s)\sum_{\ell\geq 1}\frac{c_{\ell}(m)}{\ell^s}\qquad\Re(s)>1,
}
where $c_\ell(m)$ is the Ramanujan sum and $\sigma_{\alpha}(n):=\sum_{d|n}d^{\alpha}$ and a generalized version of the Mellin formula for $(1\pm x)^{-s}$ as given in~\cite{Bet}. We end up with a formula of the shape (for $\ba=(1,\dots,1)$)
\est{
\sum_{\ell\geq1}\sum_{\substack{h\mod \ell,\\(h,\ell)=1}}\sum_{n_1,\dots,n_{k-1}\geq1}\sum_{n_1,\dots,n_{k-1}}d(n_1)\cdots d(n_{k-1})\e{\tfrac{hn_1+\cdots+hn_k}{\ell}}f(n_1,\dots,n_{k-1},\ell)
}
for some smooth function $f$. Applying Voronoi's summation formula to each variable $n_1,\dots,n_k$ transform each sum over $n_i$ in a main term $M_{i}$ plus a sum of similar shape but with $h$ replaced by $\overline h$ and thus we obtain
\est{
\sum_{\ell\geq1}\sum_{\substack{h\mod \ell,\\(h,\ell)=1}}\prod_{i=1}^{k-1}\Big(M_i+\sum_{n_i\geq1}d(n_i)\e{\tfrac{\overline h n_i}{\ell}}f_i(\cdots)
\Big)}
for some smooth functions $f_i$. We then treat as main terms the terms where we pick up more $M_i$ than series, and treat the other terms as error terms which we estimate essentially trivially. We then treat and assembly the main terms (which correspond to the poles of $\mathcal A_{\ba}(s)$), an operation which constitutes the main difficulty of the paper as we have to deal with several integral transforms in order to take them to their final form (actually, we choose the equivalent root of moving the lines of integration of several complex integrals, collecting the contribution of the residues of some poles). Combining the two cases for the range of the variables one then deduces Corollary~\ref{mtmcc}.

We notice that the above structure of the proof of Theorem~\ref{mtmc} is at first glance very similar to that of the asymptotic for $M_k(q)$ performed in~\cite{Bet}. There are however several important differences at a more detailed level, e.g. in the ways the integrals are manipulated, in the treatment of the error terms and in the combinatorics.

The paper is organized as follow. In Section~\ref{shifts} we state Theorem~\ref{mtmt} which gives the analytic continuation for the shifted version of $\mathcal A_{\ba}(s)$ and in Section~\ref{ptmtmc} we easily deduce Theorem~\ref{mtmc} from it and we compute the constants given in~\eqref{mcost}. In Section~\ref{sectp} we prove Theorem~\ref{sect} by integrating over the shifts introduced in Theorem~\ref{mtmt} and evaluating the resulting complex integrals. The rest of the paper is dedicated to the proof of Theorem~\ref{mtmt}: in Section~\ref{rab} we give a uniform bound for the region of absolute convergence, whereas in Section~\ref{mtmp} we set up the proof of Theorem~\ref{mtmt} dividing the sum according to the range of the variables. In Section~\ref{pmmlle} we estimate the case where the variables have roughly the same size and in Section~\ref{pmmlle2} we give a trivial bound for the case where there's a large variable. Finally, in Section~\ref{pcompl} we recompose the various sums reconstructing the polar terms.

\subsubsection*{Acknowledgments}
The author wishes to thank Trevor Wooley and Tim Browning for useful comments. 

The work of the author is partially supported by FRA 2015 ``Teoria dei Numeri" and by PRIN ``Number Theory and Arithmetic Geometry". 

\section{The shifted case}\label{shifts}

For $k\geq3$, $\ba$ as above and $\balpha=(\alpha_1,\dots,\alpha_k)$, $\bbeta=(\beta_1,\dots,\beta_k)\in\C^k$ we define the Dirichlet series
\est{
\mathcal A_{\ba;\balpha,\bbeta}(s):=\sum_{a_1n_1+\cdots+a_kn_k=0}\frac{\tau_{\alpha_1,\beta_1}(n_1)\cdots \tau_{\alpha_k,\beta_k}(n_k)}{n_1^{\frac12+s}\cdots n_{k}^{\frac12+s}},
}
where $\tau_{\alpha,\beta}(n):=\sum_{d_1d_2=n} d_1^{-\alpha}d_2^{-\beta}$. If $|\Re(\alpha_m)|,|\Re(\beta_m)|\leq \frac1{2(k-1)}$ for all $m$, then it is easy to see (cf. Lemma~\ref{absc} below) that $\mathcal A_{\ba;\balpha,\bbeta}(s)$ converges absolutely on 
$$\Re(s)>1-\frac1k-\frac1k\sum_{m=1}^k\min(\Re(\alpha_m),\Re(\beta_m)).$$
 The following Theorem gives the analytic continuation for $\mathcal A_{\ba;\balpha,\bbeta}(s)$ to a larger half-plane, provided that
\es{\label{xi}
\eta_{\balpha,\bbeta}:=\tfrac32\sum_{m=1}^k( |\Re(\alpha_m)|+|\Re(\beta_m)|)
}
is not too large. Before stating the theorem we need to introduce (a slight variation of) the Estermann function, which for $\alpha,\beta\in\C$, $h\in\Z$ and $\ell \in\N$ is defined as
\est{
D_{\alpha,\beta}(s,\tfrac {h}{\ell }):=\sum_{n\geq1}\frac{\tau_{\alpha,\beta}(n)}{n^s}\e{n\tfrac h\ell }
}
for $\Re(s)>1-\min(\Re(\alpha),\Re(\beta))$ and where $\e{x}:=e^{2\pi ix}$. The Estermann function can be continued to a meromorphic function on $\C$ satisfying a functional equation (see e.g.~\cite{BC13b}).
\begin{lemma}\label{lest}
Let $\alpha,\beta\in\C$, $h\in\Z$ and $\ell \in\N$ with $(h,\ell )=1$. Then 
\est{
D_{\alpha,\beta}(s,\tfrac {h}{\ell })-\ell ^{1-\alpha-\beta-2s}\zeta(s+\alpha)\zeta(s+\beta)
}
can be extended to an entire function of $s$. Moreover, one has
\es{\label{ffeff}
D_{\alpha,\beta}(s,\tfrac h\ell)=\ell^{1-2s-\alpha-\beta}(\chi_{+1}(s;\alpha,\beta)D_{-\alpha,-\beta}(s,\tfrac{\overline h}\ell)-\chi_{-1}(s;\alpha,\beta)
D_{-\alpha,-\beta}(s,-\tfrac{\overline h}\ell)),
}
where $\overline h$ denotes the inverse of $h\mod \ell$ and 
\est{
\chi_{\pm1}(s;\alpha,\beta):=2(2\pi)^{2s-2+\alpha+\beta}\Gamma(1-s-\alpha)\Gamma(1-s-\beta)\cos\Big(\pi \frac{(s+\alpha)\mp(s+\beta)}{2}\Big).
}
\end{lemma}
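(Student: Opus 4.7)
The approach is to reduce the analytic properties of $D_{\alpha,\beta}(s,h/\ell)$ to those of the Hurwitz zeta function $\zeta(w,a):=\sum_{n\geq 0}(n+a)^{-w}$, whose meromorphic continuation and functional equation are classical. Expanding $\tau_{\alpha,\beta}(n)=\sum_{d_1d_2=n}d_1^{-\alpha}d_2^{-\beta}$ and separating each $d_i$ by its residue class $b_i\in\{1,\ldots,\ell\}$ modulo $\ell$, one obtains for $\Re(s)$ large
\est{
D_{\alpha,\beta}(s,h/\ell)=\ell^{-2s-\alpha-\beta}\sum_{b_1,b_2=1}^{\ell}\e{\tfrac{b_1b_2h}{\ell}}\zeta\pr{s+\alpha,\tfrac{b_1}{\ell}}\zeta\pr{s+\beta,\tfrac{b_2}{\ell}}.
}
Since each Hurwitz factor is meromorphic on $\C$ with a unique simple pole of residue $1$ at argument equal to $1$, this finite-sum identity already provides meromorphic continuation of $D_{\alpha,\beta}(s,h/\ell)$ to all of $\C$, with possible poles only at $s=1-\alpha$ and $s=1-\beta$.

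To prove the first assertion of the lemma, I would compute the residues at these two points. At $s=1-\alpha$ only $\zeta(s+\alpha,b_1/\ell)$ is singular with residue $1$; the inner sum $\sum_{b_1=1}^{\ell}\e{b_1b_2h/\ell}$ equals $\ell$ when $b_2h\equiv 0\pmod \ell$ and $0$ otherwise, and since $(h,\ell)=1$ this forces $b_2=\ell$, yielding $\zeta(s+\beta,1)=\zeta(s+\beta)$. A direct check shows that the resulting residue coincides with that of $\ell^{1-\alpha-\beta-2s}\zeta(s+\alpha)\zeta(s+\beta)$ at the same point, and the case $s=1-\beta$ is symmetric.

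For the functional equation, the key tool is the Hurwitz formula
\est{
\zeta(w,a)=2(2\pi)^{w-1}\Gamma(1-w)\sum_{m\geq 1}m^{w-1}\sin(2\pi ma+\pi w/2),
}
valid for $\Re(w)<0$, $0<a\leq 1$, and by analytic continuation elsewhere. Applying it to both Hurwitz factors in the expansion above and linearizing with $\sin A\sin B=\tfrac12[\cos(A-B)-\cos(A+B)]$ produces two cosine terms whose $s$-independent constant phases are precisely $\pi(\alpha-\beta)/2$ and $\pi(2s+\alpha+\beta)/2$, matching those appearing in $\chi_{+1}(s;\alpha,\beta)$ and $\chi_{-1}(s;\alpha,\beta)$ respectively.

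Finally, writing each cosine as a sum of two exponentials and evaluating the inner Gauss-type sum $\sum_{b_1=1}^{\ell}\e{b_1(b_2h\pm m_1)/\ell}$, which equals $\ell$ when $b_2\equiv\mp m_1\overline h\pmod \ell$ and $0$ otherwise, collapses the $(b_1,b_2)$ double sum; the remaining $(m_1,m_2)$-series then reassemble into Dirichlet series of the shape $\sum d_1^{-\alpha'-s'}d_2^{-\beta'-s'}\e{d_1d_2(\pm\overline h/\ell)}$, which are recognized as the two values $D_{-\alpha,-\beta}(\cdot,\pm\overline h/\ell)$, and the remaining prefactors combine into $\chi_{\pm 1}(s;\alpha,\beta)$. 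The main obstacle is essentially bookkeeping: carefully tracking the signs of $\overline h$, $m_1$, $m_2$, and the trigonometric phases so that the two $\chi_{\pm 1}$ terms emerge with the correct relative sign.
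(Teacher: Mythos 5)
The paper does not actually prove Lemma~\ref{lest}; it defers to the reference [BC13b], which establishes the continuation and functional equation of the Estermann function by exactly the route you take (expansion into Hurwitz zeta functions plus the Hurwitz formula). Your argument is correct: the finite-sum identity over residue classes $b_1,b_2$ gives the meromorphic continuation, the Gauss-sum collapse isolates the polar term, and the product-to-sum manipulation of the two sine series produces the phases $\pi(\alpha-\beta)/2$ and $\pi(2s+\alpha+\beta)/2$ with the correct relative minus sign and the prefactor $\ell^{1-2s-\alpha-\beta}\cdot 2(2\pi)^{2s-2+\alpha+\beta}\Gamma(1-s-\alpha)\Gamma(1-s-\beta)$.

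Two small points. First, your reassembled series is
$\sum_{m_1,m_2} m_1^{s+\alpha-1}m_2^{s+\beta-1}\e{\pm m_1m_2\overline h/\ell}=D_{-\alpha,-\beta}(1-s,\pm\overline h/\ell)$,
i.e.\ the argument on the right-hand side is $1-s$, not $s$. This is in fact the correct functional equation — it is how \eqref{ffeff} is applied later in the proof of Lemma~\ref{mlb}, where the right-hand side appears as $D_{-\gamma_j,-\delta_j}(1-z,\cdot)$ — so the ``$s$'' in the displayed \eqref{ffeff} is a typo that your computation correctly does not reproduce; you should commit to $1-s$ rather than leaving the argument as ``$\cdot$''. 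Second, when $\alpha=\beta$ the pole at $s=1-\alpha$ is double, so matching residues alone does not show the difference is entire; you should either match the full principal parts or note that the case $\alpha=\beta$ follows from the generic case by continuity in $(\alpha,\beta)$. Neither issue affects the substance of the proof.
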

For $\balpha,\bbeta\in\{z\in\C\mid|\Re(z)|<\frac1{2(k-1)}\}^k$ we also define
\es{\label{dfm}
&\mathcal M_{\ba;\balpha,\bbeta}(s):=\sum_{\substack{\mathcal I\cup \mathcal J=\{1,\dots,k\},\\ \mathcal I\cap \mathcal J=\emptyset,\ |\mathcal I|\geq \frac{k+2}{2}}}\sum_{\substack{\{\alpha_i^*,\beta_i^*\}=\{\alpha_i,\beta_i\}\\ \forall i\in \mathcal I}}\Big(\prod_{i\in \mathcal I}\frac{\zeta(1-\alpha_i^*+\beta_i^*)}{|a_i|^{-\alpha_i^*+\frac{1+s_{\mathcal I,\balpha^*}}{|\mathcal I|}}}\Gamma(-\alpha_i^*+\tfrac{1+s_{\mathcal I,\balpha^*}}{|\mathcal I|})\Big)\\
&\quad\times \frac{\Delta_{\balpha^*;\mathcal I}}{| \mathcal I|(s-1)+s_{\mathcal I,\balpha^*}+1 } \sum_{ \ell\geq1}\frac{\prod_{i\in \mathcal I}(a_i,\ell)^{1-\alpha_i^*+\beta_i^*}}{\ell^{\sum_{i\in\mathcal  I}(1-\alpha_i^*+\beta_i^*)}}\sumstar_{h\mod \ell}\prod_{j\in \mathcal J} D_{\alpha_j,\beta_j}(1-\tfrac{1+s_{\mathcal I,\balpha^*}}{|\mathcal I|},\tfrac {ha_j}{\ell}),
}
where $\sumstar$ indicates the the sum is over $h$ which are coprime to $\ell$, and the second sum is over $\balpha^*=(\alpha_i^*)_{i\in\mathcal I}$, $\bbeta^*=(\beta^*)_{i\in\mathcal I}\in\C^{|\mathcal I|}$ satisfying the above condition. Also, we put $s_{\mathcal I,\balpha^*}:=\sum_{r\in\mathcal I}\alpha_r^*$ and
\est{
\Delta_{\balpha^*;\mathcal I}:=\bigg(\Gamma\Big(\sum_{\substack{i\in \mathcal I,\, \sign(a_i)=1}}\Big(-\alpha_i^*+\frac{1+s_{\mathcal I,\balpha^*}}{|\mathcal I|}\Big)\Big)\Gamma\Big(\sum_{\substack{i\in \mathcal I,\,  \sign(a_i)=-1}}\Big(-\alpha_i^*+\frac{1+s_{\mathcal I,\balpha^*}}{|\mathcal I|}\Big)\Big)\bigg)^{-1}
}
if neither of the two sums inside the $\Gamma$ functions are empty sums and $\Delta_{\balpha^*;\mathcal I}:=0$ otherwise.
\begin{remark}
Equation~\eqref{dfm} should be interpreted as defining $\mathcal M_{\ba;\balpha,\bbeta}(s)$ as a meromorphic function. Also, the definition of $\mathcal M_{\ba;\balpha,\bbeta}(s)$  can be extended to include the case where $\alpha_i=\beta_i$ since the limit for $\alpha_i\to\beta_i$ exists (cf. the proof of Theorem~\ref{mtmc}).

The absolute convergence of the series over $\ell$ in~\eqref{dfm} for $\balpha,\bbeta\in\{z\in\C\mid|\Re(z)|<\frac1{2(k-1)}\}^k$ is ensured by the convexity bound for the Estermann function,
\es{\label{convbo}
D_{\alpha,\beta}(s,\tfrac h\ell)\ll \delta^{-2}(\ell(1+|s|+|\alpha|)^\frac12(1+|s|+|\beta|)^\frac12)^{1-\Re(s)-\min(\Re(\alpha),\Re(\beta))+\delta},
}
valid for $\delta>0$, $\Re(\alpha),\Re(\beta)\ll1$, $|1-s-\alpha|,|1-s-\beta|>\delta$, and 
\es{\label{rconvbo}
 -\max (\Re(\alpha),\Re(\beta))-\delta\leq \Re(s)\leq 1-\min(\Re(\alpha),\Re(\beta))+\delta.
 }
Indeed, using~\eqref{convbo} one has that the series over $\ell$ converges as long as $|\mathcal I|>2+\frac{|\mathcal J|}{|\mathcal I|}+\sum_{i\in\mathcal I}(\frac{k}{|\mathcal I|}\Re(\alpha_i^*)-\Re(\beta_i^*))-\sum_{j\in\mathcal J}\min(\Re(\alpha_j),\Re(\beta_j))$. The right hand side is less than $4$, so the only problematic case is when $|\mathcal I|=3$. This can happen only for $k=3$ and $k=4$, and in the first case the convergence is clear since there is no Estermann function. Finally, the series converges also for $k=4$, $|\mathcal I|=3$ and $|\mathcal J|=1$ since one can save an extra factor of $\ell^{1-\eps}$ using the convexity bound for $\sum^*_hD_{\alpha,\beta}(s,\tfrac {h}\ell)$
\est{
\sumstar_{h\mod \ell} D_{\alpha,\beta}(s,\tfrac h\ell)\ll_{s,\alpha,\beta,\delta,\eps} \ell^{1-\Re(s)-\min(\Re(\alpha),\Re(\beta))+\delta+\eps},
}
for $s$ satisfying~\eqref{rconvbo} and where we used the bound $c_{\ell}(m)\ll d(\ell)(m,\ell)$ for the Ramanujan sum $c_{\ell}(m):=\sum_{\ell|a}\e{\frac {hm}\ell}$.
\end{remark}

We are now ready to state our main theorem.

\begin{theo}\label{mtmt}
Let $k\geq3$, $\ba\in\Z_{\neq0}^k$.  Then, $\mathcal A_{\ba;\balpha,\bbeta}(s)$ admits meromorphic continuation to $\Re(s)>1-\frac{2-2\eta_{\balpha,\bbeta}}{k+1}$, $\balpha,\bbeta\in\{s\in\C\mid|\Re(s)|<\frac1{2(k-1)}\}^k$. Moreover, if for some $\eps>0$ one has $\balpha,\bbeta\in\{s\in\C\mid|\Re(s)|<\tfrac{1-\eps}{2(k-1)}\}^k$ and $1-\tfrac{2-2\eta_{\balpha,\bbeta}-\eps}{k+1}\leq \Re(s)\leq 1-\tfrac{1-\eps}{k}+\tfrac{\eta_{\balpha,\bbeta}}{k+1}$, then
\es{\label{bdmt}
&\mathcal A_{\ba;\balpha,\bbeta}(s)-\mathcal M_{\ba;\balpha,\bbeta}(s)\\
&\qquad\ll_{\eps}\! \pr{ \pr{\tfrac k\eps \max_{i=1}^k|a_i|}^A \big((1+|s|)(1+\max_{i=1}^{k}(|\Im(\alpha_i)|+|\Im(\beta_i)|)\big)^7}^{Ak\frac{(k+1)(1-\frac1k-\sigma)+\eta_{\balpha,\bbeta}+\eps}{1-\frac1k-\eta_{\balpha,\bbeta}}}
}
for some absolute constant $A>0$. \end{theo}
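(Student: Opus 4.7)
The plan is to follow the strategy sketched in the introduction, with the shifts playing an essentially bookkeeping role. First, I would introduce a smooth partition of unity to localize each variable $n_i$ in a dyadic range $n_i \asymp N_i$. When the largest $N_i$ is substantially bigger than the others, the linear constraint $a_1 n_1 + \cdots + a_k n_k = 0$ forces near-cancellation, and the contribution can be controlled by the uniform absolute-convergence estimate of Section~\ref{rab}, applied in a slightly shifted half-plane; this is the ``large variable'' case handled in Section~\ref{pmmlle2}. The main regime is when all $N_i$ are comparable, in which case I would use the linear equation to eliminate the variable with the largest coefficient, say $n_k$, expressing it in terms of the remaining $n_1,\dots,n_{k-1}$.

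Next I would separate the variables arithmetically and analytically. A shifted version of Ramanujan's identity~\eqref{rffm} applied to $\tau_{\alpha_k,\beta_k}(n_k)$ with $n_k = -(a_1n_1+\cdots+a_{k-1}n_{k-1})/a_k$ introduces a sum over $\ell$ and a Ramanujan sum $c_\ell(a_1n_1+\cdots+a_{k-1}n_{k-1})$, which factors as $\sum^*_{h \bmod \ell} \prod_i \e{h a_i n_i/\ell}$. The generalized Mellin formula from~\cite{Bet} then detaches the smoothed factor coming from the partition of unity. After these steps the sum takes the schematic form
\est{
\int_{(\sigma)} \sum_{\ell \geq 1} \sum^*_{h \bmod \ell} \prod_{i=1}^{k-1}\Big( \sum_{n_i \geq 1} \tau_{\alpha_i,\beta_i}(n_i) \e{\tfrac{h a_i n_i}{\ell}} \phi_i(n_i; s, \bt) \Big)\,G(s,\bt)\,d\bt,
}
where the $\phi_i$ are smooth test functions depending on the Mellin variables $\bt$.

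I would then apply the Estermann functional equation of Lemma~\ref{lest} to each inner sum over $n_i$. Each application produces a ``polar'' main term $M_i$, coming from the $\ell^{1-2s-\alpha-\beta}\zeta(s+\alpha)\zeta(s+\beta)$ piece, plus a dual sum with $\overline h$ in place of $h$. Expanding the resulting product over $i=1,\ldots,k-1$ yields a decomposition indexed by partitions of the index set into a subset $\mathcal I$ where we keep the polar piece and its complement $\mathcal J$ where we keep the dual series. For each $(\mathcal I,\mathcal J)$ the sum over $h \bmod \ell$ becomes a product of Estermann evaluations at dual rationals, and an inspection shows that as long as $|\mathcal I| \geq \frac{k+2}{2}$ the series over $\ell$ converges absolutely by the convexity bound~\eqref{convbo}. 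Packaging these contributions, together with the symmetrization under $\alpha_i^*\leftrightarrow\beta_i^*$ produced by the two signs in~\eqref{ffeff}, gives precisely the terms defining $\mathcal M_{\ba;\balpha,\bbeta}(s)$ in~\eqref{dfm}, as worked out in Section~\ref{pcompl}.

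For the remaining partitions, with $|\mathcal I| < \frac{k+2}{2}$, the dual series are short and oscillatory; a trivial estimate combined with the Weil-type bound on $\sum^*_h$ implicit in the Estermann evaluations yields an error term compatible with~\eqref{bdmt}. The main obstacle, and the technical heart of the argument, is precisely this recomposition step: after Voronoi/Estermann one obtains a plethora of complex integrals depending on the Mellin variables $\bt$, and one must shift contours and collect residues so that the main-term contributions from different dyadic ranges $N_i$ assemble into the clean closed form~\eqref{dfm}. The uniformity in $k$, $\ba$ and the shifts required for~\eqref{bdmt} is then obtained by propagating the polynomial-in-$k$ losses through each application of~\eqref{convbo} and Stirling, using the hypothesis that $\eta_{\balpha,\bbeta}$ is small to ensure that the relevant contours lie in the region~\eqref{rconvbo} where the convexity bound applies.
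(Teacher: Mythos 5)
Your plan follows the same route as the paper: dyadic localization, a trivial treatment of the unbalanced ranges via the uniform absolute-convergence bound of Section~\ref{rab}, elimination of one variable, the modified Ramanujan identity plus the Mellin separation lemma, Voronoi/Estermann main terms versus dual sums indexed by a subset $\mathcal I$, and a final recomposition into $\mathcal M_{\ba;\balpha,\bbeta}(s)$. This is the correct skeleton, and it is the one the paper itself sketches in the introduction and carries out in Sections~\ref{mtmp}--\ref{pcompl}.

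Two points need fixing or substantial elaboration. First, you propose to eliminate the variable with the largest \emph{coefficient}; the paper eliminates the variable lying in the largest dyadic block $N_{\max}$ (after symmetrizing so that $N_1=N_{\max}$). This is not cosmetic: every error bound in Lemma~\ref{mmlle} saves powers of $N_{\max}$ precisely because the eliminated variable, whose Mellin contour is pushed far to the left, is the one carrying $N_{\max}$; with your choice the largest variable survives as a free summation variable and the trivial bounds on the dual sums would not close. Second, your error analysis for the partitions with $|\mathcal I|$ small (``trivial estimate combined with the Weil-type bound'') hides the real difficulty: after the functional equation, executing the sum over $h$ produces Ramanujan sums $c_\ell(\rho)$ of a linear form $\rho$ in the dual variables, and the terms with $\rho=0$ constitute a secondary diagonal of the same type as the original problem, which must itself be controlled by the absolute-convergence lemma --- this is the content of Lemma~\ref{mlb}, and it is where the threshold $|\mathcal I|>\frac{k+1}{2}$ and the uniformity in $k$ are actually earned. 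Likewise the recomposition of the main terms, which you rightly identify as the heart of the matter, requires first reducing to $|\alpha_i-\beta_i|>\eps/k$ to separate coincident poles, then extracting residues at $v_j=0$ for $j\in\mathcal J$, resumming the auxiliary multi-indices $\nu$ via the Beta-function identity~\eqref{beid}, and finally a Phragmen-Lindel\"of interpolation between the left edge and the region of absolute convergence to obtain~\eqref{bdmt} in the stated strip; none of this is supplied in your sketch.
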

\begin{remark}
Clearly Theorem~\ref{mtmt} provides analytic continuation also for
\es{\label{dfnas}
\mathcal A^*_{\ba;\balpha,\bbeta}(s):=\sum_{\substack{n_1,\dots,n_k\in\Z_{\neq0},\\ a_1n_1+\cdots+a_kn_k=0}}\frac{\tau_{\alpha_1,\beta_1}(|n_1|)\cdots \tau_{\alpha_k,\beta_k}(|n_k|)}{|n_1|^{\frac12+s}\cdots |n_{k}|^{\frac12+s}},
}
since $\mathcal A^*_{\ba;\balpha,\bbeta}(s)=\sum_{\epsilon\in \{\pm1\}^k}\mathcal A_{\ba_\epsilon;\balpha,\bbeta}(s)$, where $\ba_\epsilon=(\pm_1a_1,\dots,\pm_k a)$. Moreover, the sum over $\epsilon$ in the ``polar term'' $\mathcal M^*_{\ba;\balpha,\bbeta}(s):=\sum_{\epsilon\in \{\pm1\}^k}\mathcal M^*_{\ba_\epsilon;\balpha,\bbeta}(s)$ for $\mathcal A^*_{\ba;\balpha,\bbeta}(s)$ can be executed giving a neater expression for $\mathcal M^*_{\ba_\epsilon;\balpha,\bbeta}(s)$. Indeed, for $\sum_{i\in\mathcal I}z_i=1$ we have
\es{\label{regac}
&\sum_{\mathcal H\subseteq\mathcal I}\frac{\prod_{i\in\mathcal I}\Gamma(z_i)}{\Gamma(\sum_{i\in\mathcal H}z_i)\Gamma(\sum_{i\in\mathcal I\setminus \mathcal H}z_i)}=\Big(\prod_{i\in\mathcal I}\Gamma(z_i)\Big)\sum_{\mathcal H\subseteq\mathcal I}\frac{\sin(\pi \sum_{i\in\mathcal H} z_i)}\pi\\
&\hspace{8em}=\frac{1}{2\pi i} \Big(\prod_{i\in\mathcal I}\Gamma(z_i)(1+e^{\pi i z_i})-\prod_{i\in\mathcal I}\Gamma(z_i)(1+e^{-\pi i z_i})\Big)\\
&\hspace{8em}=\frac{2^{|\mathcal I|}}{2\pi i} \Big(e^{\frac{\pi i}2\sum_{i\in\mathcal I} z_i}\prod_{i\in\mathcal I}\Gamma(z_i)\cos(\tfrac \pi 2z_i)-e^{-\frac{\pi i}2\sum_{i\in\mathcal I} z_i}\prod_{i\in\mathcal I}\Gamma(z_i)\cos(\tfrac \pi 2z_i)\Big)\\
&\hspace{8em}=2\pi^{\frac{|\mathcal I|}2-1} \prod_{i\in\mathcal I}\frac{\Gamma(\frac {z_i}2)}{\Gamma(\frac {1-z_i}2)}\\
}
by the identities for the $\Gamma$ function $(\Gamma(z)\Gamma(1-z))^{-1}=\frac1\pi \sin(\pi z)$ and $\cos\pr{\frac{\pi z}2}\Gamma(z)=\frac{\pi^{1/2}2^{z-1}\Gamma(\frac z2)}{\Gamma(\frac {1-z}2)}$. Thus, since $\sum_{i\in\mathcal I}(-\alpha_i^*+\frac{1+s_{\mathcal I,\balpha^*}}{|\mathcal I|})=1$, it follows that
\es{\label{dfmas}
&\mathcal M^*_{\ba;\balpha,\bbeta}(s)=\sum_{\substack{\mathcal I\cup \mathcal J=\{1,\dots,k\},\\ \mathcal I\cap \mathcal J=\emptyset,\ |\mathcal I|\geq \frac{k+2}{2}}}\sum_{\substack{\{\alpha_i^*,\beta_i^*\}=\{\alpha_i,\beta_i\}\\ \forall i\in \mathcal I}}\Big(\prod_{i\in \mathcal I}\frac{\zeta(1-\alpha_i^*+\beta_i^*)}{|a_i|^{-\alpha_i^*+\frac{1+s_{\mathcal I,\balpha^*}}{|\mathcal I|}}}\frac{\Gamma(-\frac{\alpha_i^*}2+\frac{1+s_{\mathcal I,\balpha^*}}{2|\mathcal I|})}{\Gamma(\frac{1+\alpha_i^*}2-\frac{1+s_{\mathcal I,\balpha^*}}{2|\mathcal I|})}\Big)\\
&\hspace{0.3em}\times \frac{2^{|\mathcal J|+1}\pi^{\frac{|\mathcal I|}2-1}}{| \mathcal I|(s-1)+s_{\mathcal I,\balpha^*}+1 } \sum_{ \ell\geq1}\frac{\prod_{i\in \mathcal I}(a_i,\ell)^{1-\alpha_i^*+\beta_i^*}}{\ell^{\sum_{i\in\mathcal  I}(1-\alpha_i^*+\beta_i^*)}}\hspace{-0.39em}\sumstar_{h\mod \ell}\prod_{j\in \mathcal J} D_{\cos;\,\alpha_j,\beta_j}(1-\tfrac{1+s_{\mathcal I,\balpha^*}}{|\mathcal I|},\tfrac {ha_j}{\ell}),
}
where $D_{\cos;\,\alpha,\beta}(s,\frac h\ell):=\frac12(D_{\alpha,\beta}(s,\frac h\ell)+D_{\alpha,\beta}(s,-\frac h\ell))$. \end{remark}

\section{The proof of Theorem~\ref{mtmc}}\label{ptmtmc}
\begin{remark}
Throughout the rest of the paper, by a bold symbol $\bv$ we indicate the vector $(v_1,\dots,v_k)\in\C^k$. Also, for any set $I\subseteq \{1,\dots,k\}$ by $\bv_{ I}$ we indicate the vector $(v_i)_{i\in I}\in\C^{| I|}$.

For any $c\in\R$, by $\int_{(c)}\cdot\,ds$ we indicate that the integral is taken along the vertical line from $c-i\infty$ to $c+i\infty$. Also, we will often abbreviate $\int_{(c_{1})}\cdots\int_{(c_{r})}$ with $\int_{(c_{1},\dots,c_{r})}$.

Finally, by $\eps$ we indicate a sufficiently small positive real number, and by $A$ a positive absolute constant whose value might be different at each occurrence.

\end{remark}
We now show how to derive Theorem~\ref{mtmc} from Theorem~\ref{mtmt}.
\begin{proof}[Proof of Theorem~\ref{mtmc}] 
First, we assume $|\alpha_i|,|\beta_i|<\frac\eps {2k}$ and $\Re(s)>1$. Then, by the residue theorem
\est{
&\mathcal M_{\ba;\balpha,\bbeta}(s)=\sum_{\substack{\mathcal I\cup \mathcal J=\{1,\dots,k\},\\ \mathcal I\cap \mathcal J=\emptyset,\\ |\mathcal I|\geq \frac{k+2}{2}}}\prod_{i\in \mathcal I}\Big(\frac1{2\pi i}\int_{|w_i|=\frac \eps k }\frac{\zeta(1+\alpha_i+w_i)\zeta(1+\beta_i+w_i)}{|a_i|^{w_i+\frac{1+s_{\mathcal I,\bw'_\mathcal I}}{|\mathcal I|}}}\Gamma(w_i+\tfrac{1+s_{\mathcal I,\bw'_\mathcal I}}{|\mathcal I|}) \Big)\\
&\times \frac{\Delta_{\bw_{\mathcal I};\mathcal I}}{|\mathcal I|(s-1)+s_{\mathcal I,\bw'_\mathcal I}+1} \sum_{ \ell\geq1}\frac{\prod_{i\in \mathcal I}(a_i,\ell)^{1+2w_i+\alpha_i+\beta_i}}{\ell^{\sum_{i\in\mathcal  I}(1+2w_i+\alpha_i+\beta_i)}}\hspace{-0.5em}\sumstar_{h\mod \ell}\prod_{j\in \mathcal J} D_{\alpha_j,\beta_j}(1-\tfrac{1+s_{\mathcal I,\bw'_\mathcal I}}{|\mathcal I|},\tfrac {ha_j}{\ell})\prod_{i\in\mathcal I}dw_i,
}
where $\bw'_{\mathcal I}:=(-w_i')_{i\in\mathcal I}$, $s_{\mathcal I,\bw'_\mathcal I}=-\sum_{i\in\mathcal I}w_i$, and the circles are oriented in the positive direction. Thus, letting $\balpha,\bbeta\to\bzero$ we obtain
\es{\label{fias}
&\mathcal M_{\ba;\bzero,\bzero}(s)=
\sum_{\substack{\mathcal I\cup \mathcal J=\{1,\dots,k\},\\ \mathcal I\cap \mathcal J=\emptyset,\ |\mathcal I|\geq \frac{k+2}{2}}} \Big( \prod_{i\in \mathcal I}\frac1{2\pi i}\int_{|w_i|=\frac\eps k}\frac{\zeta(1+w_i)^2(a_i,\ell)^{2w_i}}{|a_i|^{w_i+\frac{1+s_{\mathcal I,\bw'_\mathcal I}}{|\mathcal I|}}} \Gamma(w_i+\tfrac{1+s_{\mathcal I,\bw'_\mathcal I}}{|\mathcal I|})\Big) \\
&\hspace{0.4em}\times \sum_{ \ell\geq1}\frac{\prod_{i\in \mathcal I}(a_i,\ell)}{\ell^{|\mathcal I|+\sum_{i\in\mathcal I}2w_i}}\sumstar_{h\mod \ell}\Big(\prod_{j\in \mathcal J} D_{0,0}(1-\tfrac{1+s_{\mathcal I,\bw'_\mathcal I}}{|\mathcal I|},\tfrac {ha_j}{\ell})\Big)  \frac{\Delta_{\bw_{\mathcal I};\mathcal I}}{|\mathcal I|(s-1)+s_{\mathcal I,\bw'_\mathcal I}+1}\prod_{i\in \mathcal I} dw_i.
}
Now, for $w\neq0$ we have
\est{
\frac1{w-w_1-\cdots-w_k}=\sum_{m=0}^{k}\frac{m!}{w^{m+1}}\sum_{\substack{S\subseteq\{1,\dots,k\},\\ |S|=m}}\prod_{i\in S}w_i+O_w\Big(\sum_{i=1}^k|w_i|^2\Big)
}
as $w_1,\dots,w_k\to 0$ and  so
\est{
\frac{1}{|\mathcal I|(s-1)+s_{\mathcal I,\bw'_\mathcal I}+1 }=\sum_{m=0}^{|\mathcal I|}\frac{m!}{(|\mathcal I|(s-1)+1)^{m+1}}\sum_{S\subseteq\mathcal I,|S|=m}\prod_{i\in S}w_i+O_s\Big(\sum_{i=1}^m|w_i|^2\Big).
}
Also, if $|w_i|<\frac\eps k$ for all $i\in\mathcal I$ with $\eps$ small enough, then we have
\est{
&\Big(\prod_{i\in\mathcal I}\Gamma(w_i+\tfrac{1+s_{\mathcal I,\bw'_\mathcal I}}{|\mathcal I|})\Big)\Delta_{\bw_{\mathcal I};\mathcal I}\sumstar_{h\mod \ell}\prod_{j\in \mathcal J} D_{0,0}(1-\tfrac{1+s_{\mathcal I,\bw'_\mathcal I}}{|\mathcal I|},\tfrac {ha_j}{\ell})\\
&\hspace{22em}=\sum_{S\subseteq\mathcal I}f_{\mathcal I, S}(\ell)\prod_{i\in S}w_i+O_s\Big(\sum_{i=1}^m|w_i|^2\Big)
}
for some $f_{\mathcal I, S}\in\C$. It follows that
\es{\label{dacc1}
\mathcal M_{\ba;\balpha,\bbeta}(s)&=\sum_{m=0}^{|\mathcal I|}\sum_{\substack{S_1,S_2\subseteq\mathcal I,\\  |S_1|=m,\, S_1\cap S_2=\emptyset }} 
\sum_{\substack{\mathcal I\cup \mathcal J=\{1,\dots,k\},\\ \mathcal I\cap \mathcal J=\emptyset,\ |\mathcal I|\geq \frac{k+2}{2}}} \sum_{ \ell\geq1}\frac{\prod_{i\in \mathcal I}(a_i,\ell)}{\ell^{|\mathcal I|}}  \frac{m!f_{\mathcal I, S_2}(\ell) }{(|\mathcal I|(s-1))+1)^{m+1}} \\
& \quad \times  \prod_{i\in \mathcal I}\Res_{w_i=0}\bigg(\frac{\zeta(1+w_i)^2w_{i}^{\delta_{i\in S_1\cup S_2}}}{|a_i|^{w_i+\frac{1}{|\mathcal I|}}(\ell/(a_i,\ell))^{2w_i}}\prod_{r\in\mathcal I}|a_r|^{\frac{w_i}{\mathcal I}} \bigg),
}
where $\delta_{i\in S_1\cup S_2}=1$ if $i\in S_1\cup S_2$ and $\delta_{i\in S_1\cup S_2}=0$ otherwise. By analytic continuation this gives an expression for $\mathcal M_{\ba;\bzero,\bzero}(s)$ for all $s\in\C$ and so Theorem~\ref{mtmc} follows by Theorem~\ref{mtmt}. 
\end{proof}

We conclude the section by computing the value of the constant $c_{k,k+1}(\ba)$.

First we observe that the terms in this sum with $\mathcal I=\{1,\dots,k\}$ are 
\es{\label{dacc2}
& \sum_{m=0}^{k}\frac{m! }{(k(s-1)+1)^{m+1}}
\sum_{\substack{S_1,S_2\subseteq\{1,\dots,k\},\\ |S_1|=m,\, S_1\cap S_2=\emptyset }}\sum_{ \ell\geq1}\frac{\prod_{i=1 }^k(a_i,\ell)}{\ell^k}  f_{\{1,\dots,k\}, S_2}(\ell)\\
&\hspace{8em}\times
\Big(\prod_{\substack{i=1}}^k|a_i|^{-\frac1{\mathcal I}} \Big)\prod_{\substack{i=1,\\ i\notin S_1\cup S_2}}^k \Big (2 \gamma- \log(|a_i|\ell^2/(\ell,a_i)^2)  +\frac1k\sum_{r=1}^k\log |a_r|\Big)
}
(we remark that $ f_{\{1,\dots,k\}, S_2}(\ell)/\varphi(\ell)$ does not depend on $\ell$). Among these, the term with $m=k$ is 
\es{\label{fcf}
& \frac{k! }{(k(s-1)+1)^{k+1}}\frac{\Delta_{\bzero;\{1,\dots,k\}}\Gamma(\tfrac1k)^k}{|a_1\cdots a_k|^{\frac1{k}}}\sum_{ \ell\geq1}\frac{\prod_{i=1}^k(a_i,\ell)}{\ell^k}\varphi(\ell) \\
}
since $f_{\{1,\dots,k\}, \emptyset}=\varphi(\ell)\Gamma(\frac1k)^k\Delta_{\bzero;\{1,\dots,k\}}$. Finally, 
\est{
\sum_{ \ell\geq1}\frac{\prod_{i=1}^k(a_i,\ell)}{\ell^k}\varphi(\ell)&=\rho(\ba)\prod_{p}\Big(1+\sum_{m=1}^\infty (1-\tfrac1p)p^{-m(k-1)}\Big)=\rho(\ba)\prod_{p}\Big(1+\frac{ p-1}{ p^{k}-p}\Big)\\
&=\rho(\ba)\frac{\zeta(k-1)}{\zeta(k)},
}
where
\es{\label{arco}
\rho(\ba)
&:=\prod_{p| a_1\cdots a_k}\frac{1+\sum_{m=1}^\infty (1-\frac1p)p^m\prod_{i=1}^kp^{-\max(0,m-\nu_p(a_i))}}{1+( p-1)/( p^{k}-p)}\\
}
with $\nu_p(a)=r$ if $p^r||a$. 
Thus, the expression in~\eqref{fcf} can be rewritten as
\est{
\frac{\rho(\ba)}{|a_1\cdots a_k|^{\frac1k}} \frac{k! }{(k(s-1)+1)^{k+1}} 
\frac{\zeta(k-1) \Gamma(\tfrac1k)^k}{\zeta(k)\Gamma(\tfrac rk)\Gamma(1-\tfrac rk)},
}
where $r=\{1\leq i\leq k|\sign(a_i)=1\}$ (and the above expression has to be interpreted as $0$ if $r\in\{0,k\}$). 

We remark that for all $q\geq1$ one has, as expected, $\rho(qa_1,\dots, qa_k)=q\rho(a_1,\dots, a_k)$. Finally, we observe that if we assume $\tn{GCD}(a_1,\dots,a_k)=1$ and let $\kappa(\ba):=\prod_{p|a_1\cdots a_k}p^{m_p}$, where $m_p$ is the second smallest among $v_{p}(a_1),\dots,v_{p}(a_k)$ (the smallest being $1$ by hypothesis), then we have
\es{\label{ffba}
\frac{\zeta(k)}{\zeta(k-1)}\frac{\varphi(\kappa(\ba))}{\kappa(\ba)}d(\kappa(\ba))\leq \rho(\ba)\leq  d(\kappa(\ba))
}
and so, in particular $1\ll \rho(\ba)\ll_\eps |a_1\cdots a_k|^\eps$.
Indeed,  we have
\est{
1+\sum_{m=1}^{n_p} (1-\tfrac1p)\leq 1+\sum_{m=1}^\infty (1-\tfrac1p)p^m\prod_{i=1}^kp^{-\max(0,m-\nu_p(a_i))}\leq 1+\sum_{m=1}^{n_p}  (1-\tfrac1p)+\sum_{m>n_p} (1-\tfrac1p)p^{n_p-m}
}
and so
\est{
(n_p+1) (1-\tfrac1p)\leq 1+\sum_{m=1}^\infty (1-\tfrac1p)p^m\prod_{i=1}^kp^{-\max(0,m-\nu_p(a_i))}\leq 1+{n_p} (1-\tfrac1p)+\tfrac1{p}\leq n_p+1
}
which gives
\est{
\frac{\prod_{p|\kappa(\ba)}(v_p(\kappa(\ba))+1) (1-\tfrac1p)}{1+( p-1)/( p^{k}-p)}\leq \rho(\ba)\leq  \prod_{p|\kappa(\ba)}(v_p(\kappa(\ba))+1)
}
and so~\eqref{ffba} follows.

\begin{remark}
Also the coefficients $c_{k-1,r}$ with $1\leq r\leq k$ can be expressed in terms of the Gamma and zeta functions. Indeed, by Ramanujan's formula~\eqref{rffm} if $\Re(s)>1$, $\Re(s+w)>1$, then
\est{
\sum_{\ell\geq1} \frac1{\ell^{w}} \sumstar_{h\mod \ell} D_{0,0}(s,\pm h/k)&=\sum_{\ell\geq1} \frac1{\ell^{w}}  \sum_{n\geq 1}\frac{d(n)}{n^s}c_\ell(n)=\frac1{\zeta(w)} \sum_{n\geq1}\frac{d(n)\sigma_{1-w}(n)}{n^s}\\
&=\frac{\zeta(s)^2\zeta(s+w-1)^2}{\zeta(w)\zeta(2s+w-1)},
}
since $\sum_{n\geq1}\frac{\sigma_{a}(n)\sigma_{b}(n)}{n^s}=\frac{\zeta(s)\zeta(s-a)\zeta(s-b)\zeta(s-a-b)}{\zeta(2s-a-b)}$ (cf.~(1.3.3) in~\cite{Tit}) and the same formula holds for $\Re(s+w)>2$, $\Re(2s+w)>2$, $\Re(w)>1$ by analytic continuation. In particular, assuming for simplicity $\ba=(-1,1,\dots,1)$ one computes that the contribution of the terms with $|\mathcal I|=k-1$ to~\eqref{fias} is
\est{
&
(k-1)\bigg( \prod_{\substack{i=1}}^{k-1}\frac1{2\pi i}\int_{|w_i|=\frac\eps k}\zeta(1+w_i)^2 \Gamma(w_i+\tfrac{1-s_{*}}{k-1})\bigg)  \frac{\Gamma\big(1-w_1-\frac{1-s_{*}}{k-1}\big)^{-1}\Gamma\big(w_1+\tfrac{1-s_{*}}{k-1}\big)^{-1}}{(k-1)(s-1)-s_{*}+1}\\
&\hspace{16em}\times \frac{\zeta(1-\frac1{k-1}+\frac{s_{*}}{k-1})^2\zeta(k-\frac k{k-1}+\frac {(2k-1)s_{*}}{k-1})^2}{\zeta(k-1+2s_{*})\zeta(k-\frac{2}{k-1}+\frac {2ks_{*}}{k-1} )}\prod_{\substack{i=1}}^{k-1} dw_i,
}
where $s_{*}=s_*(w_1,\dots,w_{k-1}):=\sum_{\substack{r=1}}^{k-1}w_r$. Proceeding as above one can then compute the coefficients $c_{k-1,r}$ for $1\leq r\leq k$.
\end{remark}

\section{The proof of Theorem~\ref{sect}}\label{sectp}
First, we observe that the contribution to $N(B^k)$ coming from the terms where the maximum $\max\{|x_1|,\dots,|x_k|,|y_1|,\dots |y_k|\}$ is attained at more than one of the $|x_i|$, $|y_j|$ is $O_{\ba,\eps}(B^{k-\frac32+\eps})$. Indeed, the contribution of the terms with $|x_i|,|y_i|\leq |x_1|=|y_2|$ for all $i=1,\dots,k$ is
\est{
\sum_{\substack{x_i,y_i\leq x_1=y_2<\sqrt B,\ \forall i=1,\dots,k\\ a_1x_1y_1+\cdots+a_kx_ky_k=0 }}\hspace{-0.4em}1&=\sum_{\substack{x_i,y_i\leq x_1=y_2<\sqrt B,\ \forall i=1,\dots,k\\ x_1(a_1y_1+a_2x_2)=a_3x_3y_3+\cdots+a_kx_ky_k }}\hspace{-0.5em}1\ll\hspace{-0.2em}\sum_{\substack{r<(|a_1|+|a_2|) B,\\x_i,y_i<\sqrt B,\ \forall i=3,\dots,k\\ r=a_3x_3y_3+\cdots+a_kx_ky_k }}\hspace{-0.4em}d(r)B^{\frac12}\ll_{\ba,\eps} B^{k-\frac32+\eps}.\\
}
and one can bound similarly all the other cases. Thus,
\est{
N(B^k)=\sum_{i=1}^kN_i(B)+O_{\ba,\eps}(B^{k-\frac32+\eps}),
}
where
\est{
&N_i(B)=\frac12\# \Bigg\{ (\bx,\by)\in \mathbb \Z_{\neq0}^{2k} \Bigg|
  \begin{aligned}
  & a_1x_1y_1+\cdots+a_kx_ky_k=0,\ (x_1,\dots,x_k)=(y_1,\dots,y_k)=1  \\ 
  &|x_iy_j|<B\ \forall j=1,\dots,k, |x_{j_1}|,|y_{j_2}|<|x_i|\ \forall j_1,j_2=1,\dots,k,\,j_1\neq i 
  \end{aligned}\Bigg\}\\
  &=2^{k-1}\# \Bigg\{ (\bx,\by)\in \mathbb \N^k\times\Z_{\neq0}^{k} \Bigg|
  \begin{aligned}
  & a_1x_1y_1+\cdots+a_kx_ky_k=0,\ (x_1,\dots,x_k)=(y_1,\dots,y_k)=1  \\ 
  &|x_iy_j|<B\ \forall j=1,\dots,k,\, |y_{j_2}|,x_{j_1}\!<x_i\ \forall j_1,j_2=1,\dots,k,\,j_1\neq i 
  \end{aligned}\Bigg\}.\\
  }
Notice that in the first line we divided by $4$ since $-\bx=\bx$ in $\P^{1}(\Q)$ and we multiplied by $2$ since we assumed the maximum among the $x_i,y_j$ is attained at one of the $x_i$. 

By symmetry it is sufficient to consider the case $i=1$. Also, we can assume $B$ is a half-integer.
Using M\"obius inversion formula we find
\est{
N_{1}(B)=2^{k-1}\sum_{d_1,d_2\geq1}\frac{\mu(d_1)\mu(d_2)}{d_1d_2}\sum_{\substack{x_2\dots,x_{k},|y_1|,\dots, |y_{k}|<x_1,\ x_i\in\N,y_i\in\Z_{\neq0}\\ x_1|y_j|<B/d_1d_2,\, \forall j =1,\dots k,\\  a_1x_1y_1+\cdots+a_{k}x_{k}y_{k}=0}}1.
}
Then, we express the inequalities $x_1|y_j|<B/d_1d_2$ and $x_2\dots,x_{k},|y_1|,\dots |y_{k}|<x_1$ analytically via the following formula (see Theorem G in~\cite{Ing})
\est{
\frac1{2\pi i}\int'_{(\delta)}x^{-z}\,\frac{dz}z=\begin{cases}\chi_{(0,1)}(x)+O_\delta(\frac{x^{\delta}}{T|\log x|}) & \text{if }x\in\R_{>0}\setminus\{1\}\\
1/2+O_\delta(T^{-1}) &\text{if }x=1
\end{cases}
}
where $\chi_{[0,1)}(x)$ is the indicator function of the set $[0,1)$ and $\int'$ indicates that the integral is truncated at $|\Im(z)|\leq T$. We shall choose the parameter $1\leq T\leq B$ at the end of the argument. Bounding as above the error coming from the cases where $x_1/x_i=1$, $x_1/y_j=1$ for some $i=2,\dots,k$ or $j=1,\dots,k$, we obtain
\es{\label{dsnb}
N_{1}(B)&=\sum_{d_1,d_2\geq1}\frac{\mu(d_1)\mu(d_2)}{d_1d_2}\sum_{\substack{x_1,\dots,x_{k}\in\N,y_1,\dots, y_{k}\in\Z_{\neq0},\\  a_1x_1y_1+\cdots+a_{k}x_{k}y_{k}=0}}\Big(\prod_{j=1}^k\frac1{(2\pi i)^2}\int'_{(c_{z_j},c_{w_j})}\frac1{y_j^{z_{j}+w_j} } \Big)\\
&\quad\times\!\Big(\prod_{f=2}^k\frac1{2\pi i}\int''_{(c_{u_f})}\frac1{x_f^{u_f}}\Big) \frac{2^{k-1}(B/d_1d_2)^{\sum_{j=1}^kz_{j}}}{x_1^{\sum_{j=1}^k(z_j-w_j)-\sum_{f=2}^k u_f}}\Big(\prod_{j=1}^k\frac{dz_{j}dw_j}{z_{j}w_j}\Big)\Big(\prod_{f=2}^k\frac{du_{f}}{u_f}\Big)+E
}
where $\int''$ indicates the integral is truncated at $|\Im(u_f)|\leq 2T$ and the lines of integration are $c_{z_j}=1-\frac1k+3\eps $, $c_{w_j}=\eps$ and $c_{u_f}=1-\frac1k+\eps$, for some small $\eps>0$. The error term $E$ is $O_{\eps,\ba,k}(B^{k-1+3k\eps}/T)$. Indeed, for example, in the most delicate case one needs to bound sums of the following form (we take $a_1=\dots=a_k=1$ for simplicity, but the same proof extends to the general case):
\es{\label{poet}
&\sum_{\substack{x_1,\dots,x_{k},y_1,\dots, y_{k}\in\Z_{\neq0},\ x_1\neq y_2\\  x_1y_1+\cdots+x_{k}y_k=0}}\frac{B^{k-1+3k\eps}}{|x_1\cdots x_ky_1\cdots y_k|^{1-\frac1k+\eps}}\frac1{T\log|x_1/y_2|}.\\
} 
Now, $(\log|x/y|)^{-1}\ll \frac{|x|}{|x-y|}$ if $|x-y|<|x|/2$ (and thus $|y|\geq |x|/2$) and $(\log|x/y|)^{-1}\ll 1$ otherwise. Thus, this sum is
\est{
&\ll_{k,\eps} \frac{B^{k-1+3k\eps}}T+\sum_{\substack{x_1,\dots,x_{k},y_1,y_2,\dots y_{k}\in\Z_{\neq0},0\neq|x_1-y_2|< |x_1|/2\\  x_1y_1+x_2y_2+\cdots+x_{k}y_k=0}}\frac{B^{k-1+3k\eps}/T}{|x_2\cdots x_ky_1y_3\cdots y_k|^{1-\frac1k+\eps}}\frac{1}{|x_1-y_2||x_1|^{1-\frac2{k}+\eps}}.
}
Now, a simple computation shows that for $m\neq0$
\est{
&\sum_{m=x_3y_3+\cdots+x_{k}y_k}\frac1{|x_3\cdots x_ky_3\cdots y_k|^{1-\frac1k+\eps}}\ll_{\eps} {m^{-\frac2{k}}}\\
}
and so, writing $r=x_1-y_2$,  $n=y_1+x_2$ and $m=x_3y_3+\cdots+x_{k}y_k$  and bounding easily the case $m=0$, we see that the sum in~\eqref{poet} is bounded by
\est{
&\ll_{k,\eps} \frac{B^{k-1+3k\eps}}T+\frac{B^{k-1+3k\eps}}T \sum_{\substack{x_1,m,r,x_2\in\Z_{\neq0}, n\in\Z\\ x_1n-x_2r=m,\\|r|< |x_1|/2,\,n\neq x_2}}\frac{1}{|x_1|^{1-\frac2{k}+\eps}|n-x_2|^{1-\frac1k+\eps}|x_2|^{1-\frac1k+\eps}|r||m|^{\frac2k}}.
} 
The terms with $|m|>( |x_1n|+|x_2r|)/4$ can be bounded easily by using $|m|^{-\frac2k}\ll |x_1n|^{-\frac2k}$ and disregarding the linear equation
. For the terms with $|m|\leq( |x_1n|+|x_2r|)/4$, we have also $|x_1n|\leq \frac53|x_2r|$  and so, since $r< |x_1|/2$, then $|n|<\frac 56|x_2|$ and so $|n-x_2|^{1-\frac1k+\eps}\gg  |x_2|^{1-\frac1k+\eps}$. Thus, we obtain that the sum in~\eqref{poet} is 
\est{
&\ll_{k,\eps}\frac{B^{k-1+3k\eps}}T+\frac{B^{k-1+3k\eps}}T \sum_{\substack{x_1,m,r,x_2,n\in\Z_{\neq0}\\ x_1n-x_2r=m,\\  m\ll |x_2r|,\, r\ll |x_1|}}\frac{1}{|x_1|^{1-\frac2{k}+\eps} |x_2|^{2-\frac2k+\eps}|r||m|^{\frac2k}}.
}
Then, we write $m=\ell +x_1g$ with $\ell\equiv -x_2r\mod{|x_1|}$, $|g|\ll \frac{|x_2r|}{|x_1|}$, $-\frac{|x_1|}{2}< \ell\leq \frac{|x_1|}{2}$ and $(\ell,g)\neq (0,0)$, $\ell +x_1g\neq x_2r$. Dividing according to whether $g\neq0$ and $g=0$ we obtain that the above sum is
\est{
&\ll \hspace{-0.5em} \sum_{\substack{x_1,r,x_2,g\in\Z_{\neq0}\\  |g|\ll {|x_2r|}/{|x_1|},\\   r\ll |x_1|}}\frac{B^{k-1+3k\eps}/T}{|x_1|^{1-\frac2{k}+\eps} |x_2|^{2-\frac2k+\eps}|r||gx_1|^{\frac2k}}
+ \sum_{\substack{x_1,r,x_2\in\Z_{\neq0}\\ 1\leq |\ell|\leq \frac {|x_1|}2,\,  r\ll |x_1|,\\ x_1|(\ell-x_2r)\neq0}}\frac{B^{k-1+3k\eps}/T}{|x_1|^{1-\frac2{k}+\eps} |x_2|^{2-\frac2k+\eps}|r||\ell|^{\frac2k}}\\
&\ll\hspace{-0.5em} \sum_{\substack{x_1,r,x_2\in\Z_{\neq0}\\  r\ll |x_1|,}}\frac{B^{k-1+3k\eps}/T}{|x_1|^{2-\frac2{k}+\eps} |x_2|^{1+\eps}|r|^{\frac2k}}
+ \sum_{\substack{\ell,r,x_2\in\Z_{\neq0}\\ \ell-x_2r\neq0}}\frac{d(\ell-x_2r)B^{k-1+3k\eps}/T}{|x_2|^{2-\frac2k+\eps}|r|^{1+\frac\eps 2}|\ell|^{1+\frac\eps2}}\ll \frac{B^{k-1+3k\eps}}T,\\
}
as claimed.

Now, we go back to~\eqref{dsnb} and make the change of variables $u_f\to u_f+z_f$ for all $f=2,\dots,k$. Summing the Dirichlet series, we obtain 
\est{
N_{1}(B)&=2^{k-1} \Big(\prod_{j=1}^k\frac1{(2\pi i)^2}\int'_{(c_{w_j},c_{z_j})} \Big) \Big(\prod_{f=2}^k\frac1{2\pi i}\int'''_{(c_{u_f})} \Big)\frac{B^{\sum_{j=1}^kz_{j}} \mathcal A^*_{\ba;\bz +\balpha,\bz+\bw}(0)}{\zeta(1+\sum_{j=1}^kz_{j})^2}\\
&\quad\times\Big(\prod_{f}\frac{du_{f}}{u_f+z_f}\Big)\Big(\prod_{j}\frac{dz_{j}dw_j}{z_{j}w_j}\Big)+O_{k,\eps,\ba}(B^{k-1+\eps}/T),
}
where $\mathcal A^*_{\ba_\epsilon;\bz +\balpha,\bz+\bbeta}(0)$ is as defined in~\eqref{dfnas}, $c_{u_f}=-2\eps$, 
$$\balpha:=(-w_1-\sum_{j=2}^k(w_j+u_j),u_2,\dots,u_k)$$
and $\int'''$ indicates the integral is truncated at $|\Im(z_f+u_f)|\leq 2T$.

Now, we apply Theorem~\ref{mtmt} to $\mathcal A^*_{\ba;\bz +\balpha,\bz+\bbeta}(0)=\mathcal A^*_{\ba;\bz +\balpha-\bxi,\bz+\bbeta-\bxi}(1-\frac1k)$, where $\bxi:=(1-\frac1k,\dots,1-\frac1k)$. We keep as main term only the summand in~\eqref{dfmas} with $\mathcal I=\{1,\dots,k\}$, treating the other summands as error terms. Thus, we write
\est{
\mathcal A^*_{\ba;\bz+\balpha,\bz+\bw}(0)=\mathcal M^{**}_{\ba;\bz+\balpha,\bz+\bw}+\mathcal E^{**}_{\ba;\bz+\balpha,\bz+\bw},
}
where
\est{
\mathcal M^{**}_{\ba;\bz+\balpha,\bz+\bw}&=\sum_{\substack{\{\alpha_i^*,\beta_i^*\}=\{\alpha_i,w_i\}\\ \forall i=1,\dots,k}}\Big(\prod_{i=1}^k\frac{\zeta(1-\alpha_i^*+\beta_i^*)}{|a_i|^{-z_i-\alpha_i^*+\frac{1+\sum_{r=1}^k(z_r+\alpha^*_r)}{k}}}\frac{\Gamma(-\frac{z_i+\alpha_i^*}2+\frac{1+\sum_{r=1}^k(z_r+\alpha^*_r)}{2k})}{\Gamma(\frac{1+z_i+\alpha_i^*}2-\frac{1+\sum_{r=1}^k(z_r+\alpha^*_r)}{2k})}\Big)\\
&\quad\times \frac{2\pi^{\frac{k}2-1}}{1+\sum_{i=1}^k(z_i+\alpha_i^*-1) } \sum_{ \ell\geq1}\frac{\prod_{i=1}^k(a_i,\ell)^{1-\alpha_i^*+\beta_i^*}}{\ell^{\sum_{i=1}^k(1-\alpha_i^*+\beta_i^*)}}\varphi(\ell),
}
and $\mathcal E^{**}_{\ba;\bz+\balpha,\bz+\bbeta}$ is holomorphic on a region containing 
\est{
\eta:=\sum_{i=1}^k{(|\Re(z_i+\alpha_i-1+\tfrac1k)|+|\Re(z_i+w_i-1+\tfrac1k)|)}<\tfrac{1-\eps}{2k},
}
where it satisfies
\est{
\mathcal E^{**}_{\ba;\bz+\balpha,\bz+\bbeta}
\ll_{k,\eps,\ba}\! \pr{ (1+\max_{i=1}^{k}(|\Im(\alpha_i+z_i)|+|\Im(z_i+w_i)|)}^{14k(\eta+\eps)}.
}
Assuming $\eps$ is small enough with respect to $k$, we can bound the contribution coming from $\mathcal E^{**}$ by moving the line of integration $c_{z_k}$ to $c_{z_k}=\frac1{2k}-6k\eps$, obtaining a contribution of $O(B^{k-1+26k\eps}(T^{7}B^{-\frac1{2k}}+T^{-1}))$ form the integrals over the new line of integration and on the horizontal segments.
Thus, we obtain 
\est{
&N_{1}(B)=N^{'}_{1}(B)+N^{''}_{1}(B)+O_{k,\eps,\ba}(B^{k-1+26k\eps}(T^{7k}B^{-\frac1{2k}}+T^{-1})),
}
where
\est{
N'_{1}(B)&:= \Big(\prod_{j=1}^k\frac1{(2\pi i)^2}\int'_{(c_{w_j},c_{z_j})} \Big)\Big(\prod_{f=2}^k\frac1{2\pi i}\int'''_{(c_{u_f})} \Big)
\sum_{\substack{(\alpha_1^*,\beta_1^*)=(\alpha_1,w_1)\\ \{\alpha_i^*,\beta_i^*\}=\{u_i,w_i\}\\ \forall i=2,\dots,k}}Q_{\balpha^*,\bbeta^*}(\bz,\bw)\\
&\frac{\prod_{i=1}^{k}\zeta(1-\alpha_i^*+\beta_i^*)}{1+\sum_{i=1}^{k}(z_i+\alpha_i^*-1)}\frac{B^{\sum_{j=1}^kz_{j}} }{\zeta(1+\sum_{j=1}^kz_{j})^2}\Big(\prod_{f=2}^kdu_{f}\Big)\Big(\prod_{j=1}^k\frac{dz_{j}dw_j}{w_j}\Big)
}
with
\est{
\mathcal Q_{\balpha^*,\bbeta^*}(\bz,\bw)&:=\Big(\prod_{i=1}^k\frac{1}{|a_i|^{-z_i-\alpha_i^*+\frac{1+\sum_{r=1}^k(z_r+\alpha^*_r)}{k}}}\frac{\Gamma(-\frac{z_i+\alpha_i^*}2+\frac{1+\sum_{r=1}^k(z_r+\alpha^*_r)}{2k})}{\Gamma(\frac{1-z_i-\alpha_i^*}2+\frac{1+\sum_{r=1}^k(z_r+\alpha^*_r)}{2k})}\Big)\\
&\quad\times \frac{2^{k}\pi^{\frac{k}2-1}}{z_1\prod _{f=2}^kz_f(z_f+u_f)}\sum_{ \ell\geq1}\frac{\prod_{i=1}^k(a_i,\ell)^{1-\alpha_i^*+\beta_i^*}}{\ell^{\sum_{i=1}^k(1-\alpha_i^*+\beta_i^*)}}\varphi(\ell),
}
and where $N''_{1,\epsilon}(B)$ is defined in the same way, but with the condition $(\alpha_1^*,\beta_1^*)=(\alpha_1,w_1)$ in the sum replaced by $(\alpha_1^*,\beta_1^*)=(w_1,\alpha_1)$.
We remark that if $\eps$ is small enough, $\mathcal Q_{\balpha^*,\bbeta^*}(\bz,\bw)$ is holomorphic on a region containing
\est{
 \sum_{i=1}^k\Re(\beta_i^*-\alpha_i^*)>2-k+\eps,\ \Re(z_f+u_f)>0\ \forall f=2,\dots,k\ ,\\
1-\tfrac3{2k}\leq \Re(z_i+\alpha^*_i)< 1-\tfrac1{k}+2k\eps,\ \Re(z_i)>0\ \forall i=1,\dots,k,
} 
where by Stirling's formula it satisfies
\begin{align}
\mathcal Q_{\balpha^*,\bbeta^*}(\bz,\bw)&\ll_{\ba,k,\eps} \frac{\prod_{i=1}^k|\Im(z_i+\alpha_i^*-\frac{1}{k}\sum_{r=1}^k(z_r+\alpha^*_r))|^{-\frac12-\Re(z_i+\alpha_i^*-\frac{1+\sum_{r=1}^k(z_r+\alpha^*_r)}{k})}}{|z_1|\prod _{i=2}^k|z_i(z_i+u_i)|}
\notag\\
&\ll_{\ba,k,\eps} \frac{\prod_{i=1}^k|\Im(z_i+\alpha_i^*-\frac{1}{k}\sum_{r=1}^k(z_r+\alpha^*_r))|^{\frac12-\Re(z_i+\alpha_i^*)+\eps}}{|z_1|\prod _{i=2}^k|z_i(z_i+u_i)|}.\label{bdfq0}
\end{align}

Now we move $c_{z_1}$ to $c_{z_1}=1-\frac3{2k}+6k\eps$ passing through the pole at $k-1 -\sum_{i=1}^k(z_i+\alpha_i^*)=0$. Notice that doing so $\alpha_1^*-\beta_1^*$ stays constant so we don't cross the pole  of $\zeta(1-\alpha_1^*+\beta^*_1)$. The contribution of the integrals on the new line of integration and on the horizontal segments is trivially $O_{k,\eps,\ba}(B^{k-1+12k\eps}(B^{-\frac1{2k}}+T^{-1}))$, whereas the contribution of the residue is 
\est{
&\frac1{2\pi i}\int'_{(c_{w_1})}   \Big(\prod_{j=2}^k\frac1{(2\pi i)^2}\int'_{(c_{w_j})}\int'_{(c_{z_j})} \Big)\Big(\prod_{f}\frac1{2\pi i}\int'''_{(\delta_{c_{u_f}})} \Big)
\sum_{\substack{(\alpha_1^*,\beta_1^*)=(\alpha_1,\beta_1)\\ \{\alpha_i^*,\beta_i^*\}=\{u_i,w_i\}\\ \forall i=2,\dots,k}}Q_{\balpha^*,\bbeta^*}(\bz',\bw)\\
&\Big(\prod_{i=1}^{k}\zeta(1-\alpha_i^*+\beta_i^*)\Big) \frac{B^{k-1+w_1+\sum_{j=2}^k\beta^*_{j}}}{\zeta(k+w_1+\sum_{j=2}^k\beta^*_{j})^2}\Big(\prod_{f}du_{f}\Big)\Big(\prod_{j=2}^k\frac{dz_{j}dw_j}{w_j}\Big)\frac{dw_1}{w_1}
}
where $\bz'=(k-1 -\alpha_1-\sum_{i=2}^k(z_i+\alpha_i^*),z_2,\dots,z_k)$ and where we used that for $z_1=k-1 -\alpha^*_1-\sum_{i=2}^k(z_i+\alpha_i^*)$ one has 
\est{
\sum_{j=1}^kz_{j}=k-1-\sum_{j=1}^k\alpha^*_{j}=k-1+w_1-\sum_{j=2}^k\alpha^*_{j}+\sum_{j=2}^k(w_j+u_j)=k-1+w_1+\sum_{j=2}^k\beta^*_{j}.
}

Next, we observe that the  terms in the sum over $\balpha^*,\bbeta^*$ for which $(\alpha_h^*,\beta_h^*)=(w_h,u_h)$ for some $h\in\{2,\dots,k\}$ are $O_{k,\eps,\ba}(B^{k-1+12k\eps}(TB^{-\frac1{2k}}+T^{-1}))$; indeed one can move $c_{w_h}$ and $c_{u_h}$ to $c_{u_h}=-\frac1{2k}+(6k-2)\eps$ and $c_{w_h}=\frac1{2k}-(6k-1)\eps$ and then bound trivially obtaining the claimed bound. Notice that doing so we don't pass through any poles, since $\Re(1-\alpha_1^*+\beta_1^*)=1+\Re(2w_1+\sum_{j=2}^k{u_j}+w_j)$ stays constant, whereas $\Re(1-\alpha_h^*+\beta_h^*)=\Re(1-w_h^*+u_h^*)$ stays less than one. 

Thus, we only have to consider the term with $(\alpha_j^*,\beta_j^*)=(u_j,w_j)$ for all $j=2,\dots,k$ and moving the lines of integration as above for all $i=2,\dots,k$ one obtains that it's enough to consider the contribution from the residue at $w_j=0$ for all $j=2,\dots,k$. To summarize, we arrive to
\est{
N'_{1}(B)&=\frac1{2\pi i}\int'_{(c_{w_1})} \Big(\prod_{j=2}^k\frac1{(2\pi i)^2}\int'_{(c_{z_j})}\int'''_{(c_{u_j})} \Big)
\sum_{\substack{(\alpha_1^*,\beta_1^*)=(-w_1-\sum_{j=2}^ku_j,w_1)\\ (\alpha_i^*,\beta_i^*)=(u_i,0)\\ \forall i=2,\dots,k}}Q_{\balpha^*,\bbeta^*}(\bz',\bw')B^{k-1+w_1} \\
&\times \zeta(1+2w_1+\sum_{j=2}^k{u_j}) \frac{\prod_{i=2}^{k}\zeta(1-u_i)}{\zeta(k+w_1)^2}\frac{dw_1}{w_1} \prod_{j=2}^kdu_{j}dz_j+O_{k,\eps,\ba}\Big(B^{k-1+12k\eps}\Big(\frac{T^{Ak}}{B^{-\frac1{2k}}}+\frac1T\Big)\Big)
}
with $\bw'=(w_1,0,\dots,0)$. Next we move the line of integration $c_{w_1}$ to $c_{w_1}=-\frac1{4k}+6k\eps$ passing through the pole at $w_1=0$ only, so that bounding trivially the contribution of the new line of integration we obtain
$N'_{1}(B)=C_{1,1}(T)B^{k-1}+O_{k,\eps,\ba}(B^{k-1+12k\eps}(TB^{-\frac1{4k}}+T^{-1})),$
where
\est{
C_{1,1}(T)&:=\Big(\prod_{j=2}^k\frac1{(2\pi i)^2}\int'_{(c_{z_j})}\int'''_{(c_{u_j})} \Big)
\frac{Q_{\bu',\bzero}(\bz',\bzero)}{\zeta(k)^2} \zeta(1+\sum_{j=2}^k{u_j})  \prod_{j=2}^k\zeta(1-u_j)du_{j}dz_j
}
with $\bu'=(-\sum_{j=2}^ku_j,u_2,\dots,u_k)$ and $\bz'=(k-1 -\alpha_1^*-\sum_{i=2}^k(z_i+u_i),z_2,\dots,z_k)=(k-1-\sum_{i=2}^kz_i,z_2,\dots,z_k)$ and lines of integration $c_{u_j}=-2\eps$, $c_{z_i}=1-\frac1k+3\eps$. Notice that~\eqref{bdfq0} in this case gives
\est{
\mathcal Q_{\balpha^*,\bbeta^*}(\bz',\bzero)&\ll_{\ba,k,\eps} \frac{\prod_{i=2}^k|\Im(z_i+u_i)|^{-\frac12+\frac1k}}{|k-1-z_2+\dots+z_k|\prod _{i=2}^k|z_i(z_i+u_i)|}
}
and thus, using the convexity bound $\zeta(1+\sum_{j=2}^k{u_j})\ll_\eps (1+|u_2+\cdots+u_k|)^{(k+1)\eps}$, we obtain
\est{
C_{1,1}(T)&:=C_{1,1}'+O(T^{-\frac1{2}+\frac1k+2k\eps}),
}
where $C_{1,1}'$ is defined as $C_{1,1}(T)$ but where we removed the truncations at $|\Im(z_i)|\leq T$ and $ |\Im(z_i+u_i)|\leq 2T$ from the integrals. Thus,
\est{
N'_{1}(B)=C_{1,1}'B^{k-1}+O_{k,\eps,\ba}(B^{k-1+12k\eps}(TB^{-\frac1{4k}}+T^{-\frac12+\frac1k})).
}

We can treat $N''_{1}(B)$ in the same way, the only difference being that in this case $\sum_{j}z_{j}=k-1-w_1-\sum_{j=2}^k\alpha_j^*$ so that we still obtain a non-negligible contribution only from the summand with $(\alpha_i^*,\beta_i^*)=(u_i,w_i)$ for all $i=2,\dots,k$. We arrive to
\est{
N''_{1}(B)&=\frac1{2\pi i}\int'_{(c_{w_1})} \Big(\prod_{j=2}^k\frac1{(2\pi i)^2}\int'_{(c_{z_j})}\int'''_{(c_{u_j})} \Big)
\sum_{\substack{(\alpha_1^*,\beta_1^*)=(w_1,-w_1-\sum_{j=2}^k u_j)\\ (\alpha_i^*,\beta_i^*)=(u_i,0)\ \forall i=2,\dots,k}}\hspace{-0.7em}Q_{\balpha^*,\bbeta^*}(\bz',\bw')B^{k-1-w_1-\sum_{j=2}^ku_j}\\
& \frac{\zeta(1-2w_1-\sum_{j=2}^k u_j)}{\zeta(k-w_1-\sum_{j=2}^k u_j)^2}\Big(\prod_{j=2}^k\zeta(1-u_j)du_{j}dz_j\Big)\frac{dw_1}{w_1}+O_{k,\eps,\ba}\Big(B^{k-1+12k\eps}\Big(TB^{-\frac1{2k}}+T^{-1}\Big)\Big),
}
with $\bz'=(k-1 +w_1-\sum_{j=2}^ku_j-\sum_{j=2}^kz_j,z_2,\dots,z_k)$ and $\bw'=(w_1,0,\dots,0)$. We move the line of integration $c_{w_1}$ to $c_{w_1}=\frac1{4k}-6k\eps$, passing through a pole at $w_1=-\frac12\sum_{j=2}^ku_j$. The integral on the new line of integration can be bounded trivially, whereas the contribution of the residue is
\est{
& \Big(\prod_{j=2}^k\frac1{(2\pi i)^2}\int'_{(c_{z_j})}\int'''_{(c_{u_j})} \Big)
\sum_{\substack{(\alpha_1^*,\beta_1^*)=(-\frac12\sum_{j=2}^k u_j,-\frac12\sum_{j=2}^k u_j)\\ (\alpha_i^*,\beta_i^*)=(u_i,0)\ \forall i=2,\dots,k}}Q_{\balpha^*,\bbeta^*}(\bz,\bw')\\
&\hspace{10em}\times \frac{-1}{\sum_{j=2}^ku_j} \frac{B^{k-1-\frac12\sum_{j=2}^ku_j}}{\zeta(k-\frac12\sum_{j=2}^ku_j)^2}\Big(\prod_{j=2}^k\zeta(1+u_j) du_{j}dz_j\Big)
}
where $\bw'=(-\frac12\sum_{j=2}^k u_j,0,\dots,0)$. Next, for each $j=3,\dots,k$ we move $c_{u_j}$ to $c_{u_j}=\frac1{2k}-6k\eps$, passing through the pole of $\zeta(1-u_j)$. The contribution on the new line of integration can be bounded trivially, and we obtain that the above is
\est{
&\frac{-1}{2\pi i}\int'_{(c_{u_2})} \Big(\prod_{j=2}^k\frac1{2\pi i}\int'_{(c_{z_j})} \Big)
Q_{\balpha',\bbeta'}(\bz,\bw')\frac{\zeta(1-u_2)}{u_2} \frac{B^{k-\frac12u_2}}{\zeta(k-\frac12u_2)^2}du_2\Big( \prod_{j=2}^k dz_j\Big)\\
&\hspace{24em}+ O_{k,\eps,\ba}\Big(B^{k-1+12k\eps}\Big(TB^{-\frac1{2k}}+T^{-1}\Big)\Big)
}
where $\balpha'=(-\frac12u_2,u_2,0,\dots,0)$ and $\bbeta'=(-\frac12u_2,0,\dots,0)$. Moving $c_{u_2}$ to $c_{u_2}=\frac1{2k}-6k\eps$ picking up the pole at $u_2=0$, we then obtain
\est{
N''_{1,\epsilon}(B)=C_{1,2}(T)B^{k-1}\log B+C_{1,3}(T)B^{k-1}+O_{k,\eps,\ba}\Big(B^{k-1+12k\eps}\Big(TB^{-\frac1{4k}}+T^{-1}\Big)\Big),
}
where
\est{
C_{1,2}(T)&:=\frac{1}2 \Big(\prod_{j=2}^k\frac1{2\pi i}\int'_{(c_{z_j})} \Big)
\frac{Q_{\bzero,\bzero}(\bz,\bzero)}{\zeta(k)^2}\Big( \prod_{j=2}^k dz_j\Big),\\
C_{1,3}(T)&:=- \Big(\prod_{j=2}^k\frac1{2\pi i}\int'_{(c_{z_j})} \Big)
\frac{Q_{\bzero,\bzero}(\bz,\bzero)}{\zeta(k)^2}\Big(\gamma +\frac{\frac{\partial}{\partial u_2}Q_{\balpha',\bbeta'}(\bz,\bw')|_{u_2=0}}{Q_{\bzero,\bzero}(\bzero,\bw')} +\frac{\zeta'(k)}{\zeta(k)}\Big)\Big( \prod_{j=2}^k dz_j\Big)
}
with $\bz=(k-1-\sum_{j=2}^kz_j,z_2,\dots,z_k)$. By~\eqref{bdfq0} and the analogous bound for the logarithmic derivative of $Q_{\balpha',\bbeta'}$ we have
\est{
C_{1,2}(T)&=C_{1,2}'+O(T^{-1}),\qquad C_{1,3}(T)=C_{1,3}'+O(T^{-1}),
}
where $C_{1,2}'$ and $C_{1,3}'$ defined as $C_{1,2},C_{1,3}$ but without the truncation at $|\Im(z_i)|\leq T$ in the integrals. Finally, we write $C_{1,2}'$ as $C_{1,2}'=\mathfrak S(\ba)\sigma_i(\ba)$, where
\est{
\mathfrak S(\ba):=\frac12\sum_{ \ell\geq1}\frac{\prod_{i=1}^{k}(a_i,\ell)}{\ell^{k}}\frac{\varphi(\ell)}{\zeta(k)^2}
}
and
\est{
\sigma'_1(\ba)&:=\Big(\prod_{i=2}^k\frac1{2\pi i}\int_{(c_{z_i})} \Big)\frac{2^{k}\pi^{\frac{k}2-1}|a_1|^{k-2-z_2-\cdots-z_k}}{k-1-z_2-\cdots-z_k} \frac{\Gamma(\frac{2-k+z_2+\dots+z_k}2)}{\Gamma(\frac{k-1-z_2-\cdots-z_k}2)}\prod_{i=2}^k\frac{\Gamma(\frac{1-z_i}2)}{\Gamma(\frac{z_i}2)|a_i|^{1-z_i}} \frac{dz_i}{z_i^2} .
}
Summarizing, we proved
\est{
N_1(B)=B^{k-1}(\mathfrak S(\ba)\sigma'_1(\ba)\log T+C_{1,1}'+C_{1,3}')+O_{k,\eps,\ba}(B^{k-1+26k\eps}(TB^{-\frac1{4k}}+T^7B^{-\frac1{2k}}+T^{-\frac12+\frac1k})).
}
Thus, Theorem~\ref{sectp} follows by taking $T=B^{\frac1{15k-2}}$ and applying the following Lemma.
\begin{lemma}
For $k\geq3$ we have $\sigma'_1(\ba)=\sigma_1(\ba)$
where $\sigma_1(\ba)$ is as in~\eqref{fsda}.
\end{lemma}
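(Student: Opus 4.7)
The plan is to identify $\sigma_1'(\ba)$ as the Mellin--Barnes form of $\sigma_1(\ba)$ obtained via Plancherel. First I would change variables $u_j = x_jy_j$ in the definition of $\sigma_1$; the pushforward of $dx\,dy$ on $[-1,1]^2$ under $(x,y)\mapsto xy$ has density $2(-\log|u|)\chi_{|u|<1}$, and the subsequent rescaling $v_j=a_ju_j$ converts $\sigma_1(\ba)$ to
\[
\sigma_1(\ba) = 2^{k-1}\int_{\R^{k-1}} \chi_{(-|a_1|,|a_1|)}\Bigl(\sum_{j=2}^k v_j\Bigr) \prod_{j=2}^k\rho_j(v_j)\,dv_j,
\]
where $\rho_j(v):=|a_j|^{-1}\log(|a_j|/|v|)\chi_{|v|<|a_j|}$. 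By Plancherel applied to $\chi_{(-|a_1|,|a_1|)}$ against the convolution $\rho_2*\cdots*\rho_k$, using that the Fourier transform of $\chi_{(-|a_1|,|a_1|)}$ equals $\sin(2\pi|a_1|\xi)/(\pi\xi)$ and that each $\hat\rho_j$ is real and even, this becomes
\[
\sigma_1(\ba) = 2^k\int_0^\infty \frac{\sin(2\pi|a_1|\xi)}{\pi\xi}\prod_{j=2}^k\hat\rho_j(\xi)\,d\xi.
\]

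Next I would compute $\hat\rho_j$ explicitly: an integration by parts gives $\hat\rho_j(\xi)=\mathrm{Si}(2\pi|a_j|\xi)/(\pi|a_j|\xi)$, and then the identity $\int_0^\infty \mathrm{Si}(u)u^{s-2}\,du=\cos(\pi s/2)\Gamma(s)/(s-1)^2$ combined with Legendre's duplication formula yields
\[
M_j(s):=\int_0^\infty \hat\rho_j(\xi)\xi^{s-1}\,d\xi = \pi^{1/2-s}\,\frac{\Gamma(s/2)}{\Gamma((1-s)/2)}\cdot\frac{|a_j|^{-s}}{(1-s)^2}
\]
for $0<\Re s<1$. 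Inserting the Mellin inversion $\hat\rho_j(\xi)=\tfrac1{2\pi i}\int_{(c_j)}M_j(s_j)\xi^{-s_j}\,ds_j$, interchanging orders of integration (justified on a common thin strip of absolute convergence), and applying $\int_0^\infty\sin u\cdot u^{s-1}\,du=\Gamma(s)\sin(\pi s/2)$ together with Gamma reflection and duplication, the $\xi$-integral evaluates to
\[
\int_0^\infty \frac{\sin(2\pi|a_1|\xi)}{\pi\xi}\xi^{-\Sigma}\,d\xi = \frac{\pi^{\Sigma-1/2}|a_1|^{\Sigma}\Gamma((1-\Sigma)/2)}{\Sigma\,\Gamma(\Sigma/2)},\qquad\Sigma:=\sum_{j=2}^k s_j.
\]

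The final change of variables $z_j=1-s_j$ (with contours $c_j\mapsto 1-c_j$) replaces each ratio $\Gamma(s_j/2)/\Gamma((1-s_j)/2)$ by $\Gamma((1-z_j)/2)/\Gamma(z_j/2)$, turns $(1-s_j)^2$ into $z_j^2$, and consolidates the $\pi$-powers into the prefactor $2^k\pi^{k/2-1}$. Setting $S:=\sum_{j=2}^k z_j$ (so that $\Sigma=k-1-S$), the resulting multi-Mellin--Barnes integrand reassembles into precisely the one defining $\sigma_1'(\ba)$, proving the identity. The main obstacle is the careful bookkeeping of the Gamma identities and the choice of contours: one must place $\Re z_j$ in a strip where the Mellin inversion for $\hat\rho_j$, the swap of integration orders, the constraint $|\Re\Sigma|<1$ needed for the $\xi$-integral, and the contour prescription in $\sigma_1'$ are all simultaneously valid; taking $c_{z_j}\in(0,1)$ satisfies all four conditions.
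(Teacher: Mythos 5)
Your proof is correct but follows a genuinely different route from the paper's. The paper reduces to the positive orthant, represents $\chi_{(0,1)}$ by its Mellin transform, separates the variables in $(\pm_2 x_2y_2\pm\cdots)^{z_1-1}$ via its Lemma~\ref{sml} (a difference of two multi-contour Mellin--Barnes integrals collapsed by the residue theorem), and only at the end converts the resulting $\Gamma(V^+)\Gamma(V^-)^{-1}$ factors into the ratios $\Gamma(\tfrac{z_i}2)/\Gamma(\tfrac{1-z_i}2)$ by summing over the $2^{k-1}$ sign patterns with the identity~\eqref{regac}. You instead push forward $dx\,dy$ under $(x,y)\mapsto xy$ to the density $2(-\log|u|)$, apply Plancherel so that the linear constraint becomes a product of Fourier transforms, and compute each transform explicitly as $\mathrm{Si}(2\pi|a_j|\xi)/(\pi|a_j|\xi)$; the half-integer Gamma ratios then come directly from the Mellin transforms of $\mathrm{Si}$ and $\sin$. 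Your route avoids both Lemma~\ref{sml} and the sign-pattern combinatorics (the Fourier transform handles the signs automatically), at the price of the explicit sine-integral computations; both routes face comparable contour bookkeeping, and both arrive at the same final multi-dimensional Mellin--Barnes integral. Two small points. First, your closing claim that ``taking $c_{z_j}\in(0,1)$ satisfies all four conditions'' is too weak as stated: the $\xi$-integral requires $0<\Re\Sigma<1$ with $\Sigma=\sum_{j=2}^k(1-z_j)$, i.e.\ the additional constraint $\sum_{j=2}^k c_{z_j}>k-2$; this is satisfied by the paper's actual contours $c_{z_j}=1-\tfrac1k+3\eps$, so nothing breaks, but it must be imposed. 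Second, your final integrand carries $|a_1|^{k-1-z_2-\cdots-z_k}$ where the paper's displayed $\sigma'_1$ has $|a_1|^{k-2-z_2-\cdots-z_k}$; note that the paper's own penultimate display (with $\prod_i|a_i/a_1|^{-z_i}$) also produces $|a_1|^{k-1-z_2-\cdots-z_k}$ after the substitution $z_i\to1-z_i$, and scale-invariance of $\sigma_1(\ba)$ forces this exponent, so the discrepancy is a typo in the paper's definition of $\sigma'_1$ rather than an error in your argument.
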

\begin{proof}
We only give a sketch, leaving the problem of justifying the manipulation of certain conditionally convergent integrals to the interested reader.

First, by symmetry, we observe that 
\est{
\sigma_1(\ba)&=2^{k}\sum_{\epsilon=(\pm_21,\dots,\pm_k1)\in\{\pm1\}^{k-1}}\Big(\prod_{i=2}^k\int_{0}^1\int_{0}^1\Big) \chi_{(0,1)}\Big(\sum_{i=2}^k(\pm_ia_{i}x_iy_i/a_1)\Big)\prod_{i=2}^kdx_idy_i.\\
}
Then, we detect the characteristic function $\chi_{(0,1)}$ using its Mellin transform obtaining  
\est{
\sigma_1(\ba)&=\sum_{\epsilon=(\pm_21,\dots,\pm_k1)\in\{\pm1\}^{k-1}}\Big(\prod_{i=2}^k\int_{0}^1\int_{0}^1\Big)\frac1{2\pi i}\int_{(c_{z_1})}\frac{2^k (\sum_{i=2}^k(\pm_ia_{i}x_iy_i/a_1))^{z_1-1}}{1-z_1}\\
&\hspace{18em}\times \chi_{\R_{>0}}\Big(\sum_{i=2}^k(\pm_ia_{i}x_iy_i/a_1)\Big)dz_1\prod_{i=2}^kdx_idy_i\\
}
with $c_{z_1}=\frac1k$.
Then, we use Lemma~\ref{sml} below with $B=0$\footnote{The Lemma is stated for $B$ large to avoid issues coming with the conditional convergence of the integrals. To make this rigorous it's enough to take a larger $B$, and then later in the argument recompose the sum over $\nu$ using~\eqref{beid}.} obtaining 
\est{
\sigma_1(\ba)&=\sum_{\epsilon=(\pm_21,\dots,\pm_k1)\in\{\pm1\}^{k-1}}\Big(\prod_{i=2}^k\int_{0}^1\int_{0}^1\Big)\Big(\prod_{i=1}^k\frac1{2\pi i}\int_{(c_{z_i})}-\prod_{i=1}^k\frac1{2\pi i}\int_{(c'_{z_i})} \Big)\frac{2^k}{1-z_1}\\
&\quad\times\frac{ G(1-z_1-z_2-\dots-z_k)}{1-z_1-z_2-\dots-z_k} \frac{\Gamma(z_1)\prod_{i=2}^k\Gamma(z_i)|a_ix_iy_i/a_1|^{-z_i}}{\Gamma(\sum^+_{1\leq i\leq k}z_i)\Gamma(\sum^-_{1\leq i\leq k}z_i)}\Big(\prod _{i=1}^k dz_i\Big)\prod_{i=2}^kdx_idy_i,\\
}
where $G(s)$ is entire with $G(0)=1$, $\sum_i^{\pm}$ indicates the sum is restricted to indexes such that $\pm_i\sign(a_i)=\pm1$, with $\pm_11:=-\sign(a_1)$, and the lines of integrations are $c'_{z_i}=\frac{1}{k-1}$ and $c_{z_i}=\frac1{k+1}$ for $i=2,\dots,k$ and $c_{z_1}=c_{z_1}'=\frac1k$. Then, we notice that we can take instead $c_{z_i}=c'_{z_i}=\frac1{k}$ for $i=2,\dots,k$ and $c_{z_1}=\frac1{k+1},c_{z_1}'=\frac1{k-1}$. We take the integral over $x_i$ and $y_i$ inside and execute them, obtaining 
\est{
\sigma_1(\ba)&=\sum_{\epsilon=(\pm_21,\dots,\pm_k1)\in\{\pm1\}^{k-1}}\Big(\prod_{i=2}^k\frac1{2\pi i}\int_{(c_{z_i})}\Big)\Big(\frac1{2\pi i}\int_{(c_{z_1})}-\,\frac1{2\pi i}\int_{(c'_{z_1})} \Big)\frac{2^k}{1-z_1}\\
&\quad\times\frac{ G(1-z_1-z_2-\dots-z_k)}{1-z_1-z_2-\dots-z_k} \frac{\Gamma(z_1)\prod_{i=2}^k\Gamma(z_i)|a_i/a_1|^{-z_i}}{\Gamma(\sum^+_{1\leq i\leq k}z_i)\Gamma(\sum^-_{1\leq i\leq k}z_i)}dz_1\Big(\prod _{i=1}^k \frac{dz_2}{(1-z_i)^2}\Big).
}
By the residue theorem the difference of the integrals in $z_1$ is equal to minus the residue at $z_1=1-z_2-\dots-z_k$ and so
\est{
\sigma_1(\ba)&=\sum_{\epsilon=(\pm_21,\dots,\pm_k1)\in\{\pm1\}^{k-1}}\Big(\prod_{i=2}^k\frac1{2\pi i}\int_{(c_{z_i})}\Big)\frac{2^{k}}{1-z_1}\frac{\Gamma(z_1)\prod_{i=2}^k\Gamma(z_i)|a_i/a_1|^{-z_i}}{\Gamma(\sum^+_{1\leq i\leq k}z_i)\Gamma(\sum^-_{1\leq i\leq k}z_i)}\prod _{i=2}^k \frac{dz_i}{(1-z_i)^2}
}
where $z_1:=1-z_2-\dots-z_k$. We take the sum over $\epsilon$ inside, and evaluate it using~\eqref{regac} (notice that since the value of $\pm_11$ is fixed we have to multiply by $\frac12$). We obtain
\est{
\sigma_1(\ba)&=\Big(\prod_{i=2}^k\frac1{2\pi i}\int_{(c_{z_i})}\Big)\frac{2^{k}\pi^{\frac{k}2-1}}{z_2+\dots+z_k}\frac{\Gamma(\frac {1-z_2-\dots-z_k}2)}{\Gamma(\frac {z_2+\dots+z_k}2)}\prod_{i=2}^k\frac{\Gamma(\frac {z_i}2)|a_i/a_1|^{-z_i}}{\Gamma(\frac {1-z_i}2)}\frac{dz_i}{(1-z_i)^2}.
}
Making the change of variables $z_i\to1-z_i$ for all $i=2,\dots,k$ we obtain $\sigma'_1(\ba)$.
\end{proof}
\section{The region of absolute convergence}\label{rab}
In this section we prove a bound for $\mathcal A_{\ba;\balpha,\bbeta}(s)$ in the region of absolute convergence. We remark that if we were not concerned with the uniformity in $k$, then an easier argument would have sufficed. 

\begin{lemma}\label{absc}
Let $k\geq3$, $\ba\in\Z_{\neq0}^k$ and $\balpha,\bbeta\in\C^{k}$ with  $|\Re(\alpha_i)|,|\Re(\beta_i)|\leq\frac1{2(k-1)}$ for all $i=1,\dots,k$. Then $\mathcal A_{\ba;\balpha,\bbeta}(s)$ converges absolutely on $\Re(s)>1-\frac1k-\frac1k\sum_{i=1}^k\min(\Re(\alpha_i),\Re(\beta_i))$. Moreover, if 
\es{\label{cos}
\Re(s)\geq1-\frac1k+\frac\eps k-\frac1k\sum_{i=1}^k\min(\Re(\alpha_i),\Re(\beta_i))
} 
for some $\eps>0$, then $\mathcal A_{\ba;\balpha,\bbeta}(s)\ll_\eps  \pr{\frac{A k}\eps}^{4k}$ where the implicit constant depends on $\eps$ only and $A$ is an absolute constant.
\end{lemma}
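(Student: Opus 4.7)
The plan is to reduce, via the trivial bound on the shifted divisor function, to a purely real Dirichlet-type sum, and then to show that the linear constraint is essentially irrelevant: the hypotheses force at most one of the resulting exponents to lie below $1$, so the sum may be factored almost completely.

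\textbf{Reduction.} The pointwise bound $|\tau_{\alpha,\beta}(n)|\le \tau(n)\,n^{-\min(\Re\alpha,\Re\beta)}$, obtained by estimating each term $a^{-\alpha}b^{-\beta}$ of $\tau_{\alpha,\beta}(n)=\sum_{ab=n}a^{-\alpha}b^{-\beta}$ by $a^{-\Re\alpha}b^{-\Re\beta}\le n^{-\min(\Re\alpha,\Re\beta)}$, reduces the problem to estimating
\est{\Sigma:=\sum_{\substack{n_1,\ldots,n_k\ge 1\\ a_1 n_1+\cdots+a_kn_k=0}}\prod_{i=1}^k\frac{\tau(n_i)}{n_i^{c_i}},\qquad c_i:=\tfrac12+\Re(s)+\min(\Re\alpha_i,\Re\beta_i).}
The constraint $|\Re\alpha_i|,|\Re\beta_i|\le \tfrac1{2(k-1)}$ guarantees $c_i\ge \tfrac12-\tfrac1{2(k-1)}>0$.

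\textbf{Key combinatorial fact.} \emph{Under the hypotheses, at most one index $j^*$ can satisfy $c_{j^*}<1$.} If two distinct indices $i,j$ had $c_i,c_j<1$, combining the strict inequalities with the lower bound on $\Re(s)$ and with the upper bound $\min(\Re\alpha_r,\Re\beta_r)\le \tfrac1{2(k-1)}$ on the complementary indices one obtains $\sum_r\min(\Re\alpha_r,\Re\beta_r)<-1+\tfrac{k}{2(k-1)}$, whereas the lower bound $\min(\Re\alpha_i,\Re\beta_i)\ge -\tfrac1{2(k-1)}$ combined with the individual threshold $c_i<1$ forces $\sum_r\min(\Re\alpha_r,\Re\beta_r)>\tfrac k2-1-\tfrac{k}{2(k-1)}$; these two ranges are disjoint for every $k\ge 3$. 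For $k\ge 5$ the latter bound already exceeds the maximum value $\tfrac k{2(k-1)}$ permitted for $\sum_r\min$, so no such $j^*$ exists at all.

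\textbf{Conclusion.} If all $c_i\ge 1$, drop the linear constraint and factor:
\est{\Sigma\le \prod_{i=1}^k\sum_{n\ge 1}\frac{\tau(n)}{n^{c_i}}=\prod_{i=1}^k\zeta(c_i)^2.}
The $\eps/k$ slack built into the hypothesis gives $c_i-1\gg \eps/k$ for each $i$, so $\zeta(c_i)\ll k/\eps$ and $\Sigma\ll(Ak/\eps)^{2k}$, well within the claimed bound. If the exceptional $c_{j^*}<1$ occurs, solve the linear equation for $n_{j^*}=|\sum_{i\ne j^*}a_in_i/a_{j^*}|$: checking at the extremal configuration gives $c_{j^*}\ge \tfrac23$, so the divisor bound $\tau(n)\le 2\sqrt n$ yields $\tau(n_{j^*})/n_{j^*}^{c_{j^*}}\le 2$ for every valid $n_{j^*}\ge 1$, and summing the remaining variables freely produces $\Sigma\le 2\prod_{i\ne j^*}\zeta(c_i)^2\ll(Ak/\eps)^{2(k-1)}$.

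The analytic estimates used here are completely elementary; the only genuine obstacle is the combinatorial bookkeeping in the second step, which carefully exploits the two-sided bound on the shifts $\alpha_i,\beta_i$. It is precisely this combinatorial miracle that explains why the lemma's region $\Re(s)>1-\frac1k-\frac1k\sum\min$ — much smaller than the naive convergence region $\sum c_i>k-1$ one would get from a dyadic analysis — can be handled so cleanly.
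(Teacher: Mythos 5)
Your argument hinges on the ``key combinatorial fact'' that at most one exponent $c_i$ can drop below $1$, so that after eliminating at most one variable through the linear relation the sum factors completely. That fact is an artifact of the extra $\tfrac12$ in the displayed definition of $\mathcal A_{\ba;\balpha,\bbeta}(s)$, which is inconsistent with the rest of the paper (e.g.\ the pole locations in Theorem~\ref{mtmc} force $\mathcal A_{\ba;\bzero,\bzero}(s)=\mathcal A_{\ba}(s)$) and with the proof of this very lemma, where the exponents are $s_i:=s+\alpha_i$. The stated threshold $1-\frac1k-\frac1k\sum_i\min(\Re(\alpha_i),\Re(\beta_i))$ is precisely the sharp abscissa for that normalization, and in the regime where the lemma is applied (e.g.\ $\balpha=\bbeta=\bzero$, $\sigma=1-\frac1k+\frac{4\eps}k$) \emph{every} exponent $s_i$ lies just below $1$: each factor $\sum_n d(n)n^{-s_i}$ diverges, your combinatorial fact is false for all $k$, and the ``drop the constraint and factor'' step is unavailable for any of the $k$ variables. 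Your own closing remark --- that the lemma's region is ``much smaller'' than the region $\sum_i c_i>k-1$ suggested by a dyadic count --- is the symptom: those two regions coincide under the correct normalization, and a proof must actually attain that boundary.

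Attaining it requires using the linear equation to couple \emph{all} the variables, not just one. The constraint buys two things: the largest variable $n_1$ is determined by the others and satisfies $n_1\geq\max_{r\geq2}n_r$, so its weight can be transferred onto the second-largest variable (raising that exponent to $s_1+s_\ell-\eps\approx 2(1-\tfrac1k)>1$); and each of the remaining $k-2$ variables is bounded by that second-largest one. The real content of the lemma --- and the reason the paper introduces the truncated Perron integrals $\frac1{\pi i}\int_{(c_r)}(x/2)^{-w}\frac{dw}{w(w+1)}$ and then checks, via the computation with $m=k-2-|L|$, that the argument of the final zeta factor stays $\geq 1+\frac{4\eps}k$ uniformly over all subsets $J\subseteq I$ --- is to account for these truncations $n_r\leq n_\ell$ losslessly and uniformly in $k$. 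None of this appears in your argument, so it does not prove the lemma in the form the paper needs. (A smaller, separate point: even within your normalization the dichotomy ``all $c_i\geq1$'' versus ``some $c_{j^*}<1$'' is insufficient, since $\zeta(c_i)$ is unbounded as $c_i\downarrow1$; the cut must be placed at $1+\eps/k$, which your pairwise inequality $c_i+c_j\geq 2+\tfrac{2\eps}k$ would in fact permit.)
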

\begin{proof}
Clearly, we can assume $s,\alpha_i,\beta_i\in\R$ and $\alpha_i\leq\beta_i$ for all $i=1,\dots,k$ and that $\eps<\frac 18$. Also, it is sufficient to establish the claimed bound for $s=1-\frac1k-\frac1k\sum_{i=1}^k\alpha_i+\frac {4\eps} k$.

We have  $\tau_{\alpha,\beta}(n)\leq  d(n) n^{\max(-\alpha,-\beta)}$ and so
\est{
\mathcal A_{\ba;\balpha,\bbeta}(s)\leq \sum_{i=1}^k \sum_{\substack{a_1n_1+\cdots+a_kn_k=0,\\ \max_{j=1}^k(n_j)=n_i}}\frac{d(n_1)\cdots d(n_k)}{n_1^{s_1}\cdots n_{k}^{s_k}}=\sum_{i=1}^kS_i,
}
say, where we wrote $s_i:=s+\alpha_i$. By hypothesis $s_1-\eps>1-\frac1k-\frac1{k-1}-\eps>0$ and so
\est{
S_1& \ll_\eps \sum_{\substack{a_1n_1+\cdots+a_kn_k=0,\\ \max_{j=1}^n(n_j)=n_1}}\frac{d(n_2)\cdots d(n_k)}{n_1^{s_1-\eps}n_2^{s_2}\cdots n_{k}^{s_k}}\leq \sum_{\ell=1}^{k-1}\sum_{\substack{n_2,\dots,n_k\geq1,\\ \max_{r=2}^{k}(n_r)=n_\ell}}\frac{d(n_2)\cdots d(n_k)}{n_\ell^{s_1-\eps}n_2^{s_2}\cdots n_{k}^{s_k}}=\sum_{\ell=2}^kZ_{1,\ell},
}
say.
We write
\est{
c_r:=1-s_r+\xi_r\frac{\eps}k=\frac1k+\frac1k\sum_{\substack{i=1,\\ i\neq r}}^k\alpha_i+\frac{k-1}{k}\alpha_i-\eps+\xi_r\frac{\eps}k
} 
where $\xi_r$ is any real number in the interval $(1,2)$ such that  $|c_r|>\frac\eps {2k}$ (notice that since $|\alpha_r|\leq\frac1{2(k-1)}$ we also have $c_r>-\eps+\xi_r\frac{\eps}k>-\frac18$). Now, 
\est{
\chi_{[0,1]}(x)\leq 2\pr{1-\frac x{2}}\chi_{[0,2]}(x)=\frac1{\pi i}\int_{(1)}(x/2)^{-w}\,\frac{dw}{w(w+1)}=2\chi_{I}(r)+\frac1{\pi i}\int_{(c_r)}(x/2)^{-w}\,\frac{dw}{w(w+1)},
}
by the residue theorem, where $I$ is the set of $r\in\{3,\dots,k\}$ for which $c_r<0$ and $\chi_I$ is the characteristic function of the set $I$. We replace the condition $n_r\leq n_2$ for all $r=3,\dots,k$ by this formula and obtain 
\est{
Z_{1,2}&\leq \sum_{\substack{n_2,\dots,n_k\geq1}}\frac{d(n_2)\cdots d(n_k)}{n_2^{s_1+s_2-\eps}n_3^{s_3}\cdots n_{k}^{s_k}}\prod_{r=3}^k\Big(2\chi_I(r)+\frac1{\pi i}\int_{(c_r)}\pr{\frac{n_r}{2n_2}}^{-w_r}\,\frac{dw_r}{w_r(w_r+1)}\Big)\\
&=\sum_{J\subseteq I}2^{|J|}\prod_{j\in J}\zeta(1+\tfrac \eps k)^2\sum_{\substack{n_2\geq1}}\frac{d(n_2)}{n_2^{s_1+s_2-\eps}}\prod_{\ell \in L}\sum_{n_\ell\geq 1}\frac{d(n_\ell)}{n_\ell^{s_\ell}}\frac1{\pi i}\int_{(c_\ell)}\pr{\frac{n_\ell}{2n_2}}^{-w_\ell} \,\frac{dw_\ell}{w_\ell(w_\ell+1)},
}
since $s_i>1+\frac \eps k$ for all $i\in I$, and where $L:=\{3,\dots,k\}\setminus J$.  (Here and below the exchanges in the orders of sums and integrals are justified by the absolute convergence).
Exchanging the order of summation and integration and summing the Dirichlet series we obtain
\est{
Z_{1,2}&\leq \sum_{J\subseteq I}2^{|J|}\Big(\prod_{j\in J}\zeta(s_j)^2\Big)\Big(\prod_{\ell \in L} \frac1{\pi i}\int_{(c_\ell)} \zeta(s_\ell +w_\ell)^2 \Big)\zeta\Big(s_1+s_2-\eps-\sum_{\ell \in L}w_\ell\Big)^2\prod_{\ell\in L}^k\frac{dw_\ell}{w_\ell(w_\ell+1)} .
}
The real part of the argument of the last $\zeta$ in the above equation is 
\est{
&s_1+s_2+ \sum_{\ell \in L}\pr{s_\ell -\xi_\ell\frac{\eps}k}-|L|-\eps\geq (|L|+2)s-|L|-\eps-2\frac{\eps |L|}{k}+\sum_{\ell\in L\cup\{1,2\}}\alpha_\ell \\
&\hspace{2em}=1 +\frac{k-|L|-2}{k}+\frac{ 2|L|-k+8}{k}\eps-\frac{|L|+2}{k}\sum_{j\in J}\alpha_j+\frac{k-|L|-2}{k}\sum_{\ell\in L\cup\{1,2\}}\alpha_\ell\\
}
by the definition of $s_\ell$ and since $\xi_\ell\leq 2$ and $s=1-\frac1k-\frac1k\sum_{i=1}^k\alpha_i+\frac {4\eps} k$. Since $|\alpha_i|\leq \frac1{2(k-1)}$ for all $i=1,\dots,k$, then the above expression is
\est{
&\hspace{2em}\geq 1  +\frac{k-|L|-2}{k}+\frac{ 2|L|-k+8}{k}\eps-\frac{(k-|L|-2)(|L|+2)}{k(k-1)}\\
&\hspace{2em}=  1+\frac{(k-|L|-2)(k-|L|-3)}{k(k-1)}+\frac{ 2|L|-k+8}{k}\eps=  1+\frac{m(m-1)}{k(k-1)}+\frac{k-2m+4}{k}\eps,\\
}
where $m:=k-2-|L|$ (so that $0\leq m\leq k-2$). Thus, if $m< k/2$ then $\Re(s_1+s_2-\eps-\sum_{\ell \in L}w_\ell)\geq 1+\frac{4}{k}\eps$, and the same holds if $m\geq k/2$ since in this case we have
$$1+\frac{m(m-1)}{k(k-1)}+\frac{k-2m+4}{k}\eps\geq 1+\frac{(k-2)}{4(k-1)}-\eps+\frac{8}{k}\eps\geq 1+\frac{1}{8}-\eps+\frac{8}{k}\eps>1+\frac{8}{k}\eps$$
for $k\geq3$ and $\eps<\frac18$.
It follows that
\est{
Z_{1,2}&\leq \sum_{J\subseteq I} \frac{2^{|J|}}{\pi^{|L|}}\zeta(1+4\tfrac{\eps}k)^2\Big(\prod_{j\in J}\zeta(1+\tfrac \eps k)^2\Big)\prod_{\ell \in L}\zeta(1+\xi_\ell\tfrac{\eps}k)^2 \int_{(c_\ell)}\frac{|dw_\ell|}{|w_\ell(w_\ell+1)|}
}
since by our choice we have $s_\ell+c_\ell=1+\xi_\ell\frac{\eps}k>1 $ for all $\ell\in L$. 
Finally, since $|c_\ell|>\frac\eps {2k}$ and $c_\ell>-\frac12$ we have
\est{
 \int_{(c_\ell)}\frac{|dw_\ell|}{|w_\ell(w_\ell+1)|}\leq \frac{2k}{\eps}\int_\R \frac{1}{|(1+ix)(\frac12+ix)|}\leq \frac{2k}{\eps}\int_\R \frac{1}{|\frac12+ix|^2}=\frac{4\pi k}\eps
}
and so, since $s_r>1+\xi_r\frac{\eps}k$ for all $r\in I$ and $s_\ell+c_\ell>\frac\eps k$ for all $r=3,\dots,k$, we have
\est{
Z_{1,2}&\leq \zeta(1+\tfrac\eps k)^{2k-2}\sum_{J\subseteq I} \frac{2^{|J|}}{\pi^{|L|}}\pr{\frac{4\pi k}\eps}^{|L|}\leq  k!\pr{\frac{8 k}\eps}^{k} \zeta(1+\tfrac\eps k)^{2k-2}\ll \pr{\frac{A k}\eps}^{4k}
}
for some fixed $A>0$. The same bound clearly holds for $Z_{i,j}$ for all $i,j$ and so the Lemma follows.
\end{proof}

\section{Proof of Theorem~\ref{mtmt}}\label{mtmp}
Throughout the rest of the paper, we shall write
\es{\label{dfxy}
X_{s,k,\ba,\eps}:=\tfrac k\eps(1+|s|)\max_{i=1}^k|a_i| ,\qquad 
Y_{\balpha,\bbeta}:=(1+\max_{i=1}^{k}(|\Im(\alpha_i)|+|\Im(\beta_i)|). 
}
Then, we notice that instead of~\eqref{bdmt}, it is enough to prove
\es{\label{inut}
\mathcal A_{\ba;\balpha,\bbeta}(s)-\mathcal M_{\ba;\balpha,\bbeta}(s)\ll_{\eps}\! ({X_{s,k,\ba,\eps} Y_{\balpha,\bbeta}})^{Ak}
}
for $\Re(s)\geq 1-\tfrac{2-2\eta_{\balpha,\bbeta}-\eps}{k+1}$. Indeed, we can apply the Phragmen-Lindel\"of principle on the region  $1-\tfrac{2-2\eta_{\balpha,\bbeta}-\eps}{k+1}\leq \Re(s)\leq 1-\tfrac{1-\eps}{k}+\tfrac{\eta_{\balpha,\bbeta}}{k+1}$ using~\eqref{inut} on the left boundary line and the bound for $\mathcal A_{\ba;\balpha,\bbeta}(s)$ given in Lemma~\ref{absc} with a trivial bound for $\mathcal M_{\ba;\balpha,\bbeta}(s)$ on the line $\Re(s)= 1-\tfrac{1-\eps}{k}+\tfrac{\eta_{\balpha,\bbeta}}{k+1}$. Also, in~\eqref{bdmt} we can take $(1+|s|)^7$ rather than $(1+|s|)^A$ since for $s=\sigma+it$ one has $\mathcal A_{\ba;\balpha,\bbeta}(s)=\mathcal A_{\ba;\balpha+\bt',\bbeta+\bt'}(\sigma)$ where $\bt':=(t,\dots,t)$.

Furthermore, we notice it is sufficient to prove Theorem~\ref{mtmt} in the case $|\alpha_i-\beta_i|> \frac \eps {k}$ for $i=1,\dots,k$. Indeed, assume we have proved Theorem~\ref{mtmt} in this restricted case and let
\est{
\mathcal F_{\ba;\balpha,\bbeta}(s):=\frac1{2\pi i}\int_{|\xi_1|=4\frac \eps {k}} (\mathcal A_{\ba;\balpha'(\xi),\bbeta}(s)-\mathcal M_{\ba;\balpha'(\xi),\bbeta}(s))\,\frac{d\xi}{\xi},
}
where $\balpha'(\xi)=(\alpha_1',\dots,\alpha_k'):=(\alpha_1+\xi,\alpha_2,\dots,\alpha_k)$. Then, $\mathcal F_{\ba;\balpha,\bbeta}(s)$ defines a holomorphic function in $(s,\balpha,\bbeta)$ in the domain
\est{
D:=\Big\{(s,\balpha,\bbeta) \Big| \substack{|\alpha_1-\beta_1|<\frac{2\eps}{k}, |\alpha_i-\beta_i|> \frac \eps {k},\ \forall i\geq2,|\Re(\alpha_i)|,|\Re(\beta_i)|\leq \frac{1-9\eps}{2(k-1)}\ \forall i\geq 1,\\
\Re(s)>1-\frac{2-2\eta_{\balpha,\bbeta}-10\eps}{k+1}
}
\Big\}.
}
Moreover, by Cauchy's theorem we have $\mathcal F_{\ba;\balpha,\bbeta}(s)=\mathcal A_{\ba;\balpha,\bbeta}(s)-\mathcal M_{\ba;\balpha,\bbeta}(s)$ in the subset of $D$ with $\Re(s)>4$, since $\mathcal A_{\ba;\balpha,\bbeta}(s)$ is clearly holomorphic also for $\alpha_1=\beta_1$ when the series defining it is absolutely convergent and the same holds for $\mathcal M_{\ba;\balpha,\bbeta}(s)$ since the poles of 
$\mathcal M_{\ba;\balpha'(\xi),\bbeta}(s)$ at $\alpha_1=\beta_1$ cancel. Thus, we have obtained the analytic continuation of  $\mathcal A_{\ba;\balpha,\bbeta}(s)-\mathcal M_{\ba;\balpha,\bbeta}(s)$ on the domain given by $D$ without any condition on $|\alpha_1-\beta_1|$ and by the above integral representation the bound~\eqref{inut} holds also in the case $|\alpha_1-\beta_1|\leq \frac \eps {k}$. Repeating this procedure for $i=2,\dots,k$ one obtains Theorem~\ref{mtmt} in the general case.
Thus, in the following we will assume $(\balpha,\bbeta)\in \Omega_k$ with
\es{\label{dfok}
\Omega_k:=\{(\balpha,\bbeta)\in \C^k\times \C^k\mid \tfrac {\eps}k<|\alpha_i-\beta_i|,\ \ |\alpha_i|,|\beta_i|<\tfrac{1}{2(k-1)}\, \forall i=1,\dots,k\}.
}
Finally, we can and shall assume $\sigma:=\Re(s)\ll1$.

We localize the variables of summation by introducing partitions of unity
\est{
\sumdagger_{N}P(x/N)=x,\qquad \forall x>0,
}
such that $\sumdagger_{X^{-1}\leq N\leq X}1\ll \log X$ and with $P(x)$ supported on $1\leq x\leq 2$ and satisfying $P^{(j)}(x)\ll j^{Aj}$ for some $A>0$. Notice that under these conditions, the Mellin transform of $P(x)$,
\est{
\tilde P(s):=\int_{0}^{\infty}P(x)x^{s-1}dx,
}
is entire and satisfies
\es{\label{fnn}
\tilde P(\sigma+it)\ll (1+r+|\sigma|)^{Ar}C^{|\sigma|}(1+|t|)^{-r},\qquad \forall r\geq0
}
for some $C>0$.
Thus, for $s$ satisfying~\eqref{cos} we can write 
\est{
\mathcal A_{\ba;\balpha,\bbeta}(s):=\sumdagger_{N_1,\dots,N_k} A_{\balpha,\bbeta}(s),
}
where 
\es{\label{defna}
 A_{\balpha,\bbeta}(s):=\sum_{a_1n_1+\cdots+a_kn_k=0}\frac{\sigma_{\alpha_1-\beta_1}(n_1)\cdots \sigma_{\alpha_k-\beta_k}(n_k)}{n_1^{s+\alpha_1}\cdots n_{k}^{s+\alpha_k}}P(n_1/N_1)\cdots P(n_k/N_k)
}
and, here and in the following, we omit to indicate the dependence on $\ba$ and $N_1,\dots,N_k$ to save notation. The main step in the proof consists in the following lemma which we shall prove in Section~\ref{pmmlle}.
\begin{lemma}\label{mmlle}
We have
\es{\label{vsy}
 A_{\balpha,\bbeta}(s)=\sum_{\substack{\mathcal I\cup \mathcal J=\{1,\dots,k\},\\ \mathcal I\cap \mathcal J=\emptyset,\\ |\mathcal I|>\frac{k+1}{2}}}\sum_{\substack{\{\alpha_i^*,\beta_i^*\}=\{\alpha_i^*,\beta_i^*\}\ \forall i\in \mathcal I,\\ (\alpha_j^*,\beta_j^*)=(\alpha_j,\beta_j)\ \forall j\in \mathcal J}}\prod_{i\in \mathcal I}\zeta(1-\alpha_i+\beta_i)Z_{\mathcal I;\balpha^*,\bbeta^*}(s),
 + E_1(s),
}
where $Z_{\mathcal I;\balpha,\bbeta}$ is as in~\eqref{dfnz} and $E_1(s)$ is holomorphic in $(s,\balpha,\bbeta)\in\C\times\Omega_k$ and satisfies 
\es{\label{bvsy}
E_1(s)&\ll_\eps X_{s,k,\ba,\eps}^{Ak} Y_{\balpha,\bbeta}^{7k}N_{\max}^{\eta_{\balpha,\bbeta}+2\eps} \\
&\quad\times\Big(\frac{N_{\max}^{\frac{k-1}2}}{(N_1\cdots N_k)^{\sigma}}+\frac{N_{\max}^{\frac{k}2-1-\frac1{k-1}}}{(N_1\cdots N_k)^{\sigma-\frac1{k-1}}}+\frac{ (N_1\cdots N_k)^{1-\frac1k-\sigma}}{N_{\max}}+N_{\max}^{\frac{k-3}2-\frac{k+1}{2}\sigma}\Big)
}
for $\frac12\leq\sigma\leq 1$, $(\balpha,\bbeta)\in\Omega_k$, with $\eta_{\balpha,\bbeta}$ is as in~\eqref{xi} and $N_{\max}:=\max_{i=1}^kN_i$.
\end{lemma}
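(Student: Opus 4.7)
The plan is to reduce $A_{\balpha,\bbeta}(s)$ to a family of independent twisted shifted divisor sums and then apply the Estermann functional equation~\eqref{ffeff} of Lemma~\ref{lest} variable by variable. By relabelling I may assume $N_k=N_{\max}$. First I would solve the linear constraint for $n_k=-(a_1n_1+\cdots+a_{k-1}n_{k-1})/a_k$ and separate the $n_k$-data from the remaining variables in two complementary ways. \emph{Arithmetically}, Ramanujan's identity~\eqref{rffm} applied to $\sigma_{\alpha_k-\beta_k}(n_k)$ produces a leading factor $\zeta(1-\alpha_k+\beta_k)$ together with a sum over $\ell$ of Ramanujan sums $c_\ell(n_k)=\sumstar_{h\bmod\ell}\e{hn_k/\ell}$, making the dependence on $n_k$ purely additive. \emph{Analytically}, the remaining $n_k$-dependence (the weight $P(n_k/N_k)$ and the factor $n_k^{-s-\alpha_k}$ evaluated at the linear form) is decoupled in the $n_i$ via the generalized Mellin formula for $(1\pm x)^{-s}$ of~\cite{Bet}. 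Using $\e{hn_k/\ell}=\prod_{i<k}\e{-ha_in_i/(a_k\ell)}$, these manipulations convert $A_{\balpha,\bbeta}(s)$ into a sum over $\ell$ and $h$ of a product of $k-1$ independent single-variable sums whose Mellin transforms are (weighted) Estermann values $D_{\alpha_i,\beta_i}(\cdot,-ha_i\overline{a_k}/\ell)$.

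Next I would invoke Lemma~\ref{lest} on each such sum: the functional equation~\eqref{ffeff} splits every Estermann factor into its polar piece $\ell^{1-\cdots}\zeta(\cdot+\alpha_i)\zeta(\cdot+\beta_i)$ plus an entire dual Estermann with $h\mapsto\overline h$. Expanding the product of these $k-1$ two-term splits gives a $2^{k-1}$-fold dichotomy, indexed by a subset $\mathcal I'\subseteq\{1,\dots,k-1\}$ recording which factors produced the polar piece, together with a symbolic swap $\{\alpha_i^*,\beta_i^*\}=\{\alpha_i,\beta_i\}$ for $i\in\mathcal I'$ coming from symmetrizing the two $\zeta$-factors. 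Combining with the $\zeta(1-\alpha_k+\beta_k)$ already produced on the $n_k$-side, setting $\mathcal I:=\mathcal I'\cup\{k\}$, and recomposing the Mellin integrals together with the $\ell,h$-sums into the form defining $Z_{\mathcal I;\balpha^*,\bbeta^*}$ in~\eqref{dfnz}, one recovers precisely the summands of~\eqref{vsy} indexed by $\mathcal I$ containing $k$; the $\mathcal I$ not containing $k$ are either absent in this regime or fall into the error term and will be absorbed into $E_1$ by the estimates below.

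Finally, I would declare the terms with $|\mathcal I|>(k+1)/2$ (i.e.~at least $(k-1)/2$ of the $n_i$-sums polar) the main term and collect the rest into $E_1(s)$. To bound $E_1$, I would estimate each surviving dual Estermann via the convexity bound~\eqref{convbo}, bound the $h$-sum trivially using $c_\ell(m)\ll d(\ell)(m,\ell)$, and truncate the Mellin integrals using the rapid decay~\eqref{fnn} of $\tilde P$. The four competing regimes in~\eqref{bvsy} arise by choosing the vertical lines of integration optimally according to the relative sizes of $N_{\max}$ and $N_1\cdots N_k$: roughly, the first term dominates when the $N_i$ are balanced, the second and third when a single $N_i$ or the $\ell$-sum admits an extra saving, and the fourth when the Ramanujan sum on the $n_k$-side is exploited.

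The main obstacle will be threefold. First, tracking uniformity in $k$, $|s|$, and the shifts through every step---each bound must be carried out with the $\tfrac{k}{\eps}$- and $(1+|s|)^7$-dependence built in, in particular when handling the $\sumstar_{h\bmod\ell}$ via convexity. Second, matching the combinatorial coefficients on the main side---the factor $\Delta_{\balpha^*;\mathcal I}$ and the denominator $|\mathcal I|(s-1)+s_{\mathcal I,\balpha^*}+1$ visible in~\eqref{dfm}---requires moving the Mellin contours to their canonical locations and collecting residues in a carefully chosen order, and this bookkeeping is the combinatorial heart of the argument. Third, the four error regimes in~\eqref{bvsy} are each sharp only in a specific sub-regime of $(N_1,\dots,N_k)$, so the argument cannot simply optimize the contours once and for all; instead it must split according to whether $N_{\max}$ dominates or is comparable to the geometric mean, and dispatch each piece separately before recombining.
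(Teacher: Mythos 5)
Your overall architecture coincides with the paper's: eliminate the variable carrying $N_{\max}$ via the linear relation, separate it arithmetically through a (modified) Ramanujan identity and analytically through the Mellin formula of Lemma~\ref{sml}, recognize the remaining single-variable sums as Estermann functions, and organize the resulting polar/non-polar dichotomy by the subset $\mathcal I$ of indices contributing their polar part, with $|\mathcal I|>\frac{k+1}{2}$ constituting the main term and the Beta-type resummation recovering $Z_{\mathcal I;\balpha^*,\bbeta^*}$. (Two small conflations: the split of $D_{\alpha,\beta}$ into ``polar piece plus entire remainder'' is the first assertion of Lemma~\ref{lest}, not the functional equation~\eqref{ffeff}, and the entire remainder is not itself a dual Estermann function; the paper instead extracts the polar contributions by shifting the Mellin contours $c_{u_i}$ over the poles of $D_{\alpha_i,\beta_i}$ and only applies~\eqref{ffeff} afterwards, to the product of the surviving Estermann factors. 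These differences are cosmetic.)

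The genuine gap is in your error analysis for the terms with few polar factors, $|\mathcal I|\le\frac{k+1}{2}$. You propose to ``bound the $h$-sum trivially using $c_\ell(m)\ll d(\ell)(m,\ell)$'' after dualizing. But once the functional equation is applied to the remaining product $\prod_{j\in\mathcal J}D_{\alpha_j,\beta_j}$, the dual Dirichlet series are expanded and the $\sumstar_{h\bmod\ell}$ is executed, one is left with $c_\ell(\rho)$ where $\rho$ is a linear form in the new variables $m_j$; the degenerate frequency $\rho=0$ contributes $c_\ell(0)=\varphi(\ell)$, for which the trivial bound costs a full power of $\ell$ in the $\ell$-sum and forces contour positions that do not yield~\eqref{bvsy}. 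The paper isolates these diagonal terms as the piece $\mathfrak S^{**}$ of Lemma~\ref{mlb}: they constitute a new correlation sum of exactly the type $\mathcal A_{\ba;\bgamma,\bdelta}$, which must be controlled by the uniform absolute-convergence bound of Lemma~\ref{absc} (this is what permits pushing the relevant contours an extra $\tfrac1{|\mathcal J|}$ to the right), and they are precisely the source of the second and fourth terms in~\eqref{bvsy}. Without this separate treatment of $\rho=0$ your plan recovers at best the first and third error terms, so you should build the $\rho=0$/$\rho\neq0$ dichotomy, and the recursive appeal to Lemma~\ref{absc}, explicitly into the argument.
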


In Section~\ref{pmmlle2} we shall prove the following lemma.
\begin{lemma}\label{mmlle2}
We have
\es{\label{mmlle2e}
 A_{\balpha,\bbeta}(s)=\sum_{\substack{\mathcal I\cup \mathcal J=\{1,\dots,k\},\\ \mathcal I\cap \mathcal J=\emptyset,\\ |\mathcal I|>\frac{k+1}{2}}}\sum_{\substack{\{\alpha_i^*,\beta_i^*\}=\{\alpha_i^*,\beta_i^*\}\ \forall i\in \mathcal I,\\ (\alpha_j^*,\beta_j^*)=(\alpha_j,\beta_j)\ \forall j\in \mathcal J}}\prod_{i\in \mathcal I}\zeta(1-\alpha_i+\beta_i)Z_{\mathcal I;\balpha^*,\bbeta^*}(s),
 + E_2(s),
}
where $E_2(s)$ is holomorphic in $(s,\balpha,\bbeta)\in\C\times\Omega_k$ and satisfies 
 \es{\label{tbb}
E_2(s)\ll_\eps X_{s,k,\ba,\eps}^{Ak} Y_{\balpha,\bbeta}^{k}(N_1\cdots N_k)^{1-\sigma}
N_{\max}^{-1+\eta_{\balpha,\bbeta}+\eps}.
}
\end{lemma}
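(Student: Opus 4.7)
The target bound~\eqref{tbb} contains the factor $N_{\max}^{-1}$, which becomes advantageous precisely when one variable, say $N_j=N_{\max}$, dominates the others. In this regime the linear constraint $a_1n_1+\cdots+a_kn_k=0$ determines $n_j$ from the other $n_i$'s, collapsing the sum~\eqref{defna} to a $(k-1)$-fold sum. The plan is to bound $A_{\balpha,\bbeta}(s)$ itself and the polar sum $\sum_{\mathcal{I},\balpha^*,\bbeta^*}\prod_{i\in\mathcal{I}}\zeta(1-\alpha_i+\beta_i)Z_{\mathcal{I};\balpha^*,\bbeta^*}(s)$ separately by the right-hand side of~\eqref{tbb}, and to set $E_2(s)$ to be their difference; holomorphicity on $\mathbb{C}\times\Omega_k$ will then follow from the finiteness of the sum defining $A$ (compact support of the $P(n_i/N_i)$) and from $|\alpha_i-\beta_i|>\eps/k$ on $\Omega_k$ killing the only candidate pole of $\zeta(1-\alpha_i+\beta_i)$.

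To bound $A_{\balpha,\bbeta}(s)$ directly, assume $N_j=N_{\max}$ by symmetry and solve $n_j=-(\sum_{i\neq j}a_in_i)/a_j$. The elementary inequality $|\sigma_z(n)|\leq d(n)\,n^{\max(0,\Re(z))}$ combined with $d(n)\ll n^\eps$ gives
\est{
\left|\frac{\sigma_{\alpha_i-\beta_i}(n_i)}{n_i^{s+\alpha_i}}\right|\ll n_i^{\eps-\sigma-\min(\Re(\alpha_i),\Re(\beta_i))},
}
so summing over $n_i\in[N_i,2N_i]$ for $i\neq j$ (with $n_j$ determined, hence contributing no extra sum) yields
\est{
|A_{\balpha,\bbeta}(s)|\ll X_{s,k,\ba,\eps}^{O(1)}(N_1\cdots N_k)^{1-\sigma+\eps}\,N_{\max}^{-1}\prod_{i=1}^{k}N_i^{-\min(\Re(\alpha_i),\Re(\beta_i))}.
}
Since $|\min(\Re(\alpha_i),\Re(\beta_i))|\leq|\Re(\alpha_i)|+|\Re(\beta_i)|$ and $N_i\leq N_{\max}$, the last product is $\leq N_{\max}^{(2/3)\eta_{\balpha,\bbeta}}\leq N_{\max}^{\eta_{\balpha,\bbeta}}$, matching~\eqref{tbb}.

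For the polar sum, the zeta factors $\zeta(1-\alpha_i+\beta_i)$ contribute at most $(X_{s,k,\ba,\eps}Y_{\balpha,\bbeta})^{O(1)}$ thanks to $|\alpha_i-\beta_i|>\eps/k$. The function $Z_{\mathcal{I};\balpha^*,\bbeta^*}(s)$, defined as the multiple Mellin integral whose summation over $N$-partitions recovers the polar structure of~\eqref{dfm}, has integrand containing $\tilde P(z_i)\,N_i^{z_i}$ together with products of $\zeta$, $\Gamma$, and Estermann factors. Placing the contour corresponding to the index $j$ with $N_j=N_{\max}$ just past the relevant pole extracts the $N_{\max}^{-1}$ saving, while the rapid decay~\eqref{fnn} of $\tilde P$ provides absolute convergence and the convexity bound~\eqref{convbo} combined with Stirling's formula controls the remaining factors inside $X_{s,k,\ba,\eps}^{Ak}Y_{\balpha,\bbeta}^{k}$.

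The main obstacle is the careful bookkeeping for $Z_{\mathcal{I};\balpha^*,\bbeta^*}(s)$: one must verify that each contour can be shifted past the relevant pole without introducing growth outside the allowed polynomial budget, and that the sum over $\ell$ and the Estermann factors appearing in~\eqref{dfm} remain uniformly controlled. Unlike Lemma~\ref{mmlle}, however, no arithmetic input (Voronoi summation, bounds for sums of Kloosterman sums, Ramanujan's identity) is required here; the $N_{\max}^{-1}$ gain comes essentially for free from the linear constraint fixing one of the $n_i$.
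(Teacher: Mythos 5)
Your proposal has the same architecture as the paper's proof: bound $A_{\balpha,\bbeta}(s)$ itself by the right-hand side of~\eqref{tbb}, observe that the main term obeys the same bound, and let $E_2$ be the difference. Note that your second paragraph (the ``polar sum'') is re-deriving estimates the paper already has in hand: the bounds~\eqref{bfy1} and~\eqref{bfy2} established during the proof of Lemma~\ref{mmlle} apply verbatim with $i^*$ (resp.\ $j^*$) taken to be the index of $N_{\max}$, and the paper's proof of Lemma~\ref{mmlle2} simply cites them. Where you genuinely diverge is in bounding $A_{\balpha,\bbeta}(s)$: you eliminate the variable carrying $N_{\max}$ via the linear relation and count lattice points trivially, whereas the paper writes the localizations through their Mellin transforms, lowers the contour attached to $N_{\max}$ by $1$ (the source of the $N_{\max}^{-1}$), and reduces to the uniform bound of Lemma~\ref{absc}, with a small detour because the shifted parameter attached to $N_{\max}$ falls outside the admissible range of that lemma. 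These are two renderings of the same idea, and yours is the more transparent one.

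The one real weak point is uniformity in $k$, which the statement $E_2\ll_\eps X_{s,k,\ba,\eps}^{Ak}Y_{\balpha,\bbeta}^{k}\cdots$ requires. Your use of $d(n)\ll n^{\eps}$ produces $(N_1\cdots N_k)^{1-\sigma+\eps}$, and to fold the surplus $(N_1\cdots N_k)^{\eps}\leq N_{\max}^{k\eps}$ into the single $N_{\max}^{\eps}$ of~\eqref{tbb} you must run the argument with exponent $\eps/k$; the resulting constant $C_{\eps/k}^{\,k}$ (with $C_\delta$ doubly exponential in $1/\delta$) is not $\ll_\eps (k/\eps)^{Ak}$. This is exactly the issue the paper flags at the start of Section~\ref{rab} (``if we were not concerned with the uniformity in $k$, then an easier argument would have sufficed'') and is why it routes the estimate through Lemma~\ref{absc}. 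The repair inside your framework is easy: replace the pointwise divisor bound by the mean-value estimate $\sum_{N\leq n\leq 2N}d(n)n^{-c}\ll N^{1-c}\log(2N)$ with an absolute implied constant, and absorb $(\log 2N_{\max})^{k}\ll_\eps (k/\eps)^{Ak}N_{\max}^{\eps}$. With that change, and granting~\eqref{bfy1}--\eqref{bfy2} for the main term, your argument is complete.
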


Finally, in Section~\ref{pcompl} we complete the proof of Theorem~\ref{mtmt} summing over the partitions of unity and computing the minimum of the two error terms.

\section{Proof of Lemma~\ref{mmlle}}\label{pmmlle}

Since both $ A_{\balpha,\bbeta}(s)$ and the main term on the right side of~\eqref{vsy} are symmetric in $N_1,\dots,N_k$ and with respect to the change $\ba\leftrightarrow-\ba$ (cf. Remark~\ref{fnlr} at the end of Subsection~\ref{cl}), we can and shall assume that $N_1$ is the maximum among the $N_i$ and that $a_1<0$.

\subsection{Separating the variables}
The condition $-a_1n_1=a_2n_2+\cdots+a_kn_k$ can be used to eliminate the variable $n_1$ in the definition~\eqref{defna} of $ A_{\balpha,\bbeta}(s)$, by adding the conditions 
$$ a_2n_2+\cdots+a_kn_k\equiv 0\mod {|a_1|},\quad
a_2n_2+\cdots+a_kn_k>0$$
and replacing each occurrences of $n_1$ by $(a_2n_2+\cdots+a_kn_k)/|a_1|$. This poses the problem of expressing $\tau_{\alpha_1,\beta_1}(n_1)=n_1^{-\alpha}\sigma_{\alpha-\beta}(n_1)$ in a more flexible way, which we achieve by the following modification of Ramanujan's identity~\eqref{rffm} which also allows us to remove the above congruence condition.

\begin{lemma}
Let $a,m\in\N$ and $\gamma\in\C$. Then we have
\est{
\delta_{a|n}\sigma_\gamma(n/a)&=\frac1a\sum_\ell\frac{c_\ell(n)}{\ell^{1-\gamma}}(\ell,a)^{1-\gamma}\upsilon_\gamma \pr{\frac{a\ell^2}{(  a,\ell)^2n}}+n^\gamma\sum_\ell\frac{c_\ell(n)}{\ell^{1+\gamma}}(\ell,a)^{1+\gamma}\upsilon_{-\gamma}\pr{\frac{a\ell^2}{(a,\ell)^2n}},\\
}
where $\delta_{a|n}=1$ if $a|n$ and $\delta_{a|n}=0$ otherwise,  $c_\ell(n):=\sumstar_{h\mod n}\e{\frac {hn}\ell}$ is the Ramanujan sum and for any $c_w>|\Re(\gamma)|$ 
\es{\label{dssv}
\upsilon_\gamma\pr{x}=\int_{(c_w)}x^{-\frac w2}\zeta(1-\gamma+w)\frac{G(w)}{w}dw,
}
where $G(w)$ is any even entire function which decays faster than any polynomial in vertical strips and is such that $G(0)=1$.
\end{lemma}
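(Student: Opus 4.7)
The plan is to view the identity as a smoothed approximate functional equation for $\sigma_\gamma(n/a)$ twisted by the divisibility constraint $a\mid n$. The starting point is the level-$a$ Ramanujan-type identity
$$\sum_{\ell\geq 1}\frac{c_\ell(n)(\ell,a)^s}{\ell^s}=\frac{a\,\delta_{a\mid n}\,\sigma_{1-s}(n/a)}{\zeta(s)}\qquad (\Re(s)>1),$$
which I would establish by the following chain of manipulations. First, write $\delta_{a\mid n}\sigma_\gamma(n/a)=\sum_d d^\gamma\,\delta_{ad\mid n}$ (this uses only that the divisors of $n/a$ are in bijection with multiples of $a$ dividing $n$). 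Next, detect the inner divisibility by additive characters, $\delta_{ad\mid n}=(ad)^{-1}\sum_{h\bmod ad}e(hn/(ad))$, and regroup the additive sum as $\sum_{h\bmod N}e(hn/N)=\sum_{\ell\mid N}c_\ell(n)$ via the orbit decomposition $h=(N/\ell)h'$ with $(h',\ell)=1$. Finally, interchange the order of summation and evaluate the resulting sum $\sum_{d:\,\ell\mid ad}d^{\gamma-1}=\bigl(\ell/(\ell,a)\bigr)^{\gamma-1}\zeta(1-\gamma)$ (valid for $\Re(\gamma)<0$). Reading the chain backwards and setting $\gamma=1-s$ yields the displayed identity, first for $\Re(s)>1$ and then for all $s$ by analytic continuation.

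With this arithmetic input in hand, I would substitute the contour-integral definition of $\upsilon_{\pm\gamma}$ into the two sums on the right-hand side of the lemma, then interchange the summation over $\ell$ with the integration over $w$. This swap is justified by the rapid decay of $G$ on vertical lines together with the absolute convergence of $\sum_\ell c_\ell(n)(\ell,a)^{1\mp\gamma+w}/\ell^{1\mp\gamma+w}$ for $c_w>|\Re(\gamma)|$. Applying the level-$a$ Ramanujan identity above with $s=1\mp\gamma+w$ then cancels the factors $\zeta(1\mp\gamma+w)$ that arose from $\upsilon_{\pm\gamma}$, producing integrals of $m^{w/2}\sigma_{\gamma-w}(m)$ and $m^\gamma m^{w/2}\sigma_{-\gamma-w}(m)$ against $G(w)/w$, where $m=n/a$.

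In the last stage, expand each $\sigma_{\pm\gamma-w}(m)$ as a sum over divisors $d\mid m$, using in the second term the elementary identity $m^\gamma\sigma_{-\gamma-w}(m)=m^{-w}\sigma_{\gamma+w}(m)$ obtained by the substitution $d\mapsto m/d$. Each resulting integral has the clean form $I(x):=\int_{(c_w)}x^{w/2}G(w)/w\,dw$, evaluated at $x=m/d^2$ in the first term and at $x=d^2/m$ in the second. Since $G$ is even and entire and decays faster than any polynomial on vertical lines, shifting the contour past the simple pole of $1/w$ at the origin (residue $G(0)=1$) and combining with the substitution $w\mapsto -w$ gives the key identity $I(x)+I(1/x)=1$. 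Summing the two pieces therefore telescopes to $\delta_{a\mid n}\sum_{d\mid m}d^\gamma=\delta_{a\mid n}\sigma_\gamma(m)$, as required.

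The main obstacle is the careful bookkeeping of the various powers of $a$ and $n$: the prefactors $1/a$ and $n^\gamma$ in the statement must exactly absorb the factor $a$ produced by the level-$a$ Ramanujan identity and the factor $(n/a)^\gamma$ produced by the symmetry $\sigma_\gamma(m)=m^\gamma\sigma_{-\gamma}(m)$, so that the two halves fit together into $\sigma_\gamma(m)$ with no spurious constants. A secondary point of care is choosing the contour $c_w>|\Re(\gamma)|$ so that the interchange of summation and integration in the middle step is unconditional.
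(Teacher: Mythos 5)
Your proof is correct and takes essentially the same route as the paper's: the level-$a$ twisted Ramanujan identity $\delta_{a\mid n}\sigma_{1-s}(n/a)=\zeta(s)\,a^{-1}\sum_{\ell}c_\ell(n)(\ell,a)^s\ell^{-s}$ combined with the residue-theorem splitting of $\sigma_\gamma$ through the even, rapidly decaying factor $G(w)/w$ (your identity $I(x)+I(1/x)=G(0)=1$ is exactly the paper's shift of the contour from $(c_w)$ to $(-c_w)$), merely run from the right-hand side back to $\sigma_\gamma(n/a)$ rather than forwards. Your own bookkeeping of the powers of $a$ in fact shows that the prefactor of the second sum should be $(n/a)^{\gamma}a^{-1}$ rather than $n^{\gamma}$; this is a typo in the statement as printed, not a flaw in your argument.
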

\begin{proof}
We start by observing that for $\Re(s)>1$ we have
\es{\label{dsd}
\delta_{a|m}\sum_{\ell\geq1}\frac{c_{\ell}(m/a)}{\ell^s}=\frac1{a}\sum_{\ell\geq 1}\frac{c_{\ell}(m)}{\ell^s}(\ell,a)^s.
}
Indeed, by the orthogonality of additive characters, the left hand side is
\est{
&\frac1 a\sum_{g=1}^a \sum_{\ell\geq1}\frac{1}{\ell^s}\sum_{\substack{h=1,\\ (h,\ell)=1}}^\ell\e{\frac{mh+mg\ell}{a\ell}}=
\frac1 a \sum_{\ell\geq1}\frac{1}{\ell^s}\sum_{\substack{h\mod {a\ell},\\ (h,\ell)=1}}\e{\frac{mh}{a\ell}}\\
&\hspace{2em}=\frac1a \sum_{\ell\geq1}\frac{1}{\ell^s} \sum_{\substack{d|a,\\ (a/d,\ell)=1}}\sum_{\substack{h\mod {d\ell },\\ (h,d\ell )=1}}\e{\frac{mh}{d\ell}}=\frac1a \sum_{\ell\geq1}\frac{1}{\ell^s} \sum_{\substack{d|a,\\ (a,d\ell)=d}}c_{d\ell}(m)=\frac1a \sum_{\substack{\ell'\geq1,\\ (a,\ell')=d}}\frac{1}{(\ell'/d)^s} c_{\ell'}(m),
}
as claimed. By Ramanujan's identity~\eqref{rffm}, one has that~\eqref{dsd} gives
\es{\label{mll}
\delta_{a|m}\sigma_{1-s}(m/a)=\zeta(s)\frac1{a}\sum_{\ell\geq 1}\frac{c_{\ell}(m)}{\ell^s}(\ell,a)^s
}
for $\Re(s)>1$.
Now, by the residue theorem for any $c_w>|\Re(\gamma)|$ we have
\est{
\sigma_{\gamma}(m/a)&=\int_{(c_w)}(m/a)^{\frac{w}2}\sigma_{\gamma-w}(m/a)\frac{G(w)}{w}\,dw-\int_{(-c_w)}(m/a)^{\frac{w}2}\sigma_{\gamma-w}(m/a)\frac{G(w)}{w}\,dw\\
&=\int_{(c_w)}(m/a)^{\frac{w}2}\sigma_{\gamma-w}(m/a)\frac{G(w)}{w}\,dw+\int_{(c_w)}(m/a)^{\gamma+\frac w2}\tau_{-\gamma-w}(m/a)\frac{G(w)}{w}\,dw
}
since $\sigma_{\gamma+w}(m)=m^{\gamma+w}\tau_{-\gamma-w}(m)$ and $G(-w)=G(w)$, and so the lemma then follows by~\eqref{mll}.
\end{proof}
\begin{remark}\label{rmdfG}
It will be convenient to take $G(w)$ with a zero which cancels the pole of the zeta-function in the definition~\eqref{dssv} of $\upsilon$. Thus we take
\est{
G_{\alpha_1,\beta_1}(w):=\frac{w^2-(\alpha_1-\beta_1)^2}{(\alpha_1-\beta_1)^2}\frac{\xi(\frac 12+w)}{\xi(\frac12)},
}
where $\xi(s):=\frac12s(s-1)\pi^{- \pi/2}\Gamma(\frac12w)\zeta(w)$ is the Riemann $\xi$-function. Notice that $G(0)=1$ and that by the functional equation we have $G_{\alpha_1,\beta_1}(w)=G_{\alpha_1,\beta_1}(-w)$. Also, by Stirling's formula we have
\es{\label{bndG}
G_{\alpha_1,\beta_1}(w)&\ll  (k/\eps)^2e^{-\frac{\pi}{8}|\Im(w)|}(1+|\Re(w)|)^{A(|\Re(w)|+1)}\\
&\ll (k/\eps)^2(1+r+|\Re(w)|)^{A(1+r+|\Re(w)|)}(1+|w|)^{-r}
}
for any $r\geq0$.

\end{remark}

Applying this lemma 
we obtain
\est{
 A_{\balpha,\bbeta}(s)=\sum_{\substack{\{\alpha_1^*,\beta_1^*\}=\{\alpha_1,\beta_1\},\\ (\alpha_i^*,\beta_i^*)=(\alpha_i,\beta_i)\ \forall i= 2,\dots,k}} K_{\balpha^*,\bbeta^*}(s),
}
where the sum is over $\balpha^*=(\alpha_1^*,\dots,\alpha_k^*)$, $\bbeta^*=(\beta_1^*,\dots,\beta_k^*)$ and 
\est{
K_{\balpha,\bbeta}&:=\frac1{|a_1|}\sum_{\substack{n_2,\dots,n_k\geq1,\\ n_1=(a_2n_2+\cdots+a_kn_k)/|a_1|>0}}\hspace{-1.3em}n_1^{-\alpha_1}\sum_{\ell\geq1}\frac{c_\ell\pr{a_2n_2+\cdots+a_k}}{\ell^{1-\alpha_1+\beta_1}}(\ell,a_1)^{1-\alpha_1+\beta_1}\times\\
&\quad\times\frac{\tau_{\alpha_2,\beta_2}(n_2)\cdots \tau_{\alpha_k,\beta_k}(n_k)}{(n_1 \cdots n_{k})^{s}}P(n_1/N_1)\cdots P(n_k/N_k)\upsilon_{\alpha_1-\beta_1}\pr{\frac {\ell^2} {(\ell,a_1)^2n_1}}.
}

Next, we express $P$ and $\upsilon$ using their Mellin transforms so that, after making the change of variables $u_i\rightarrow u_i-s$ for all $i\in\{1,\dots,k\}$, we obtain
\es{\label{tgrso}
K_{\balpha,\bbeta}
&=\sum_{\pm_2a_2n_2\pm_3\cdots\pm_ka_kn_k>0}\sum_{\ell\geq1}\sumstar_{h\mod \ell}\frac1{(2\pi i)^{k+1}}\int_{(c_w,c_{u_1},\dots,c_{u_k})}\frac{N_1^{u_1-s}\cdots N_k^{u_k-s}}{(\ell/(a_1,\ell))^{1-\alpha_1+\beta_1+w}} \\
&\quad\times\frac{\tau_{\alpha_2,\beta_2}(n_2)\cdots \tau_{\alpha_k,\beta_k}(n_k)c_\ell(a_2n_2+\cdots+a_kn_k)(\ell,a_1)^{1-\alpha_1+\beta_1+w}}{(a_2n_2+\cdots+a_kn_k)^{\alpha_1+u_1-\frac w2}n_2^{u_2}\cdots n_{k}^{u_k}|a_1|^{1-\alpha_1-u_1+\frac w2}}\\
&\quad \times \tilde P(u_1-s)\cdots  \tilde P(u_k-s)\zeta(1-\alpha_1+\beta_1+w)\frac{G_{\alpha_1,\beta_1}(w)}wdwdu_1\cdots du_k,
}
with lines of integrations 
\es{\label{fcolio}
c_{w}=|\Re(\alpha_1-\beta_1)|+2\eps, \quad c_{u_1} =-4k-\Re(\alpha_1)+\tfrac {c_w}2+ \eps , \quad c_{u_2}=\cdots=c_{u_k}=5k.
}

Next, we separate the variables in the expression $(a_2n_2+\cdots+a_kn_k)^{\frac12+\alpha_1+u_1-\frac w2}$ using the following lemma which we quote from Section~10 of~\cite{Bet} in a slightly adapted form.

\begin{lemma}\label{sml}
Let $\kappa \geq2$ and  $x_1,\dots x_\kappa >0$. Let  $\epsilon=(\pm_1,\cdots,\pm_\kappa 1)\in \{\pm1\}^\kappa $, with $\pm_11=-1$. Let $B\in\Z_{\geq0}$ be such that $\frac \kappa 2+\frac12<\Re(v_1)< B+1$. Moreover, let $c_{v_2},\dots,c_{v_\kappa },c'_{v_2},\dots,c'_{v_\kappa }>0$ be such that
\est{
\Re(v_1)+c_{v_2}+\cdots +c_{v_\kappa }<B+1<\Re(v_1)+c'_{v_2}+\cdots+c'_{v_\kappa }.
}
Then
\est{
&(\pm_2x_2\pm_3\cdots\pm_\kappa x_{\kappa })^{v_1-1}\chi_{\R_>0}(\pm_2x_2\pm_3\cdots\pm_\kappa x_{\kappa })\\
&=\sum_{\substack {\nu=(\nu_2,\dots,\nu_\kappa )\in\Z_{\geq0}^{\kappa -1},\\\nu_2+\cdots+\nu_\kappa =B,\\\nu_{i}=0\text{ if $\pm_i=-1$}}}\frac{B!}{\nu_2!\cdots \nu_\kappa !}\frac1{(2\pi i)^{\kappa -1}}\Bigg(\int_{(c_{v_2},\dots,c_{v_\kappa })}-\int_{(c'_{v_2},\dots,c'_{v_\kappa })}\Bigg)\frac{\Psi_{\epsilon,B}(v_1,\dots,v_\kappa )}{x_2^{v_2-\nu_2}\cdots x_\kappa ^{v_\kappa -\nu_\kappa }}dv_2\cdots dv_\kappa ,\\[-1.5em]
}
where
\est{
\Psi_{\epsilon,B}(s_1,\dots,s_\kappa )
&:=
\frac{\Gamma(s_1)\cdots\Gamma(s_\kappa )}{\Gamma(V^+_{\epsilon}(s_1,\dots,s_\kappa ))\Gamma(V^-_{\epsilon}(s_1,\dots,s_\kappa ))}\frac{G(B+1-s_1-\cdots-s_\kappa )}{B+1-s_1-\cdots-s_\kappa },\\
V^\pm_{\epsilon}(s_1,\dots,s_\kappa )&:=\sum_{\substack {1\leq i\leq \kappa ,\\\pm_i1=\pm1}}s_{i}
}
and $G(s)$ is any entire function such that $G(0)=1$ and $ G(\sigma+it)\ll e^{-C_1|t|}(1+|\sigma|)^{C_2|\sigma|}$ for some fixed $C_1,C_2>0$. Moreover, writing $s_i=\sigma_i+it_i$ for $i=1,\dots,\kappa $, we have
\es{\label{fego}
\Psi_{\epsilon,B}(s_1,\dots,s_\kappa )&\ll \frac1{\delta^\kappa }\frac{(1+B+|\sigma_1|+\cdots+|\sigma_\kappa |)^{A(1+B+|\sigma_1|+\cdots+|\sigma_\kappa |)}}{(1+|t_1|)^{\frac12-\sigma_1}\cdots (1+|t_\kappa |)^{\frac12-\sigma_\kappa }(1+|t_1|+\cdots+|t_\kappa |)^{\sigma_1+\dots+\sigma_\kappa -1}},\\
}
for some $A>0$, provided that the $s_i$ are located at a distance greater than $\delta>0$ from the poles of $\Psi_{\epsilon,B}$.
\end{lemma}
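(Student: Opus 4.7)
The plan is to prove Lemma~\ref{sml} by induction on $\kappa \geq 2$, bootstrapping from the one-dimensional Mellin--Barnes representation of $(1\pm z)^{v_1-1}$. For the base case $\kappa=2$, the LHS is $x_2^{v_1-1}\chi_{\R_{>0}}(\pm_2 x_2)$. If $\pm_2 = -1$ both sides vanish (the LHS since $x_2 > 0$; on the RHS the constraint $\nu_2 = 0$ forced by $\pm_2 = -1$ is incompatible with $\nu_2 = B \geq 1$ forced by $\Re(v_1) > 3/2$, so the sum is empty). If $\pm_2 = +1$, the only admissible index is $\nu_2 = B$; one checks that $V^+_\epsilon = s_2$ and $V^-_\epsilon = s_1$, so $\Psi_{\epsilon,B}(v_1,v_2) = G(B+1-v_1-v_2)/(B+1-v_1-v_2)$. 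Shifting the contour from $c_{v_2}$ to $c'_{v_2}$ crosses a single simple pole at $v_2 = B+1-v_1$, whose residue (using $G(0) = 1$) produces precisely $x_2^{v_1-1}$, matching the LHS.

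For the inductive step, assume the lemma for $\kappa - 1$. Setting $y := \pm_2 x_2 + \cdots + \pm_{\kappa-1} x_{\kappa-1}$, split the LHS via the one-dimensional Mellin--Barnes representation of $(y \pm_\kappa x_\kappa)^{v_1-1}\chi_{\R_{>0}}$ in the single variable $x_\kappa$, which introduces a contour integral in a new Mellin variable $v_\kappa$ with integrand proportional to $\Gamma(v_\kappa)\, y^{v_1 + (\mathrm{shift}) - 1}$ and adjusts the residue parameter to $B - \nu_\kappa$. The remaining power of $y$ still satisfies the hypotheses of the $(\kappa-1)$-variable lemma, so the inductive hypothesis unfolds it as a $(\kappa-2)$-fold contour integral in $v_2, \dots, v_{\kappa-1}$. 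Combining the two expressions and using Pascal's identity to regroup the resulting convolution of partial multinomial weights yields the final formula with weight $B!/(\nu_2!\cdots\nu_\kappa!)$, while the telescoping of beta factors introduced at each stage produces the denominator $\Gamma(V^+_\epsilon)\Gamma(V^-_\epsilon)$ in $\Psi_{\epsilon,B}$.

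The bound~\eqref{fego} follows from Stirling's approximation: $|\Gamma(\sigma_i + it_i)| \asymp |t_i|^{\sigma_i - 1/2}e^{-\pi|t_i|/2}$ and $|\Gamma(V^\pm_\epsilon)|^{-1} \asymp |V^\pm|^{1/2 - \Re V^\pm}e^{\pi|V^\pm|/2}$. Since $V^+_\epsilon + V^-_\epsilon = s_1 + \cdots + s_\kappa$, the exponential factors combine to at worst $e^{-\pi(|t_1|+\cdots+|t_\kappa| - |V^+|-|V^-|)/2} \leq 1$, and the polynomial factors give the denominator stated in~\eqref{fego}; the saving $(1+|t_1|+\cdots+|t_\kappa|)^{\Re V^+ + \Re V^- - 1}$ from $|\Gamma(V^\pm)|^{-1}$ is exactly what ensures integrability in the applications. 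The $(1+B+|\sigma_1|+\cdots+|\sigma_\kappa|)^{A(\cdots)}$ factor absorbs the $\sigma$-dependence of $\Gamma$ and the bound~\eqref{bndG} on $G$, while $\delta^{-\kappa}$ accounts for the distance to the poles.

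The main obstacle is the careful tracking of contour shifts and sign combinatorics in the inductive step. The constraint $\Re(v_1) + c_{v_2} + \cdots + c_{v_\kappa} < B+1 < \Re(v_1) + c'_{v_2} + \cdots + c'_{v_\kappa}$ must be arranged so that moving from one set of contours to the other captures exactly the residues indexed by $\nu_2 + \cdots + \nu_\kappa = B$, and no others. Equally delicate is verifying that when $\pm_i = -1$ the associated residue vanishes: this is encoded in the factor $\Gamma(V^-_\epsilon)^{-1}$, which has no poles in the relevant region and thus forces $\nu_i = 0$. Finally, absolute convergence on each contour hinges on the rapid decay of $G$ on vertical lines, which cancels the exponential growth from the Gamma ratios at large $|\Im(v_i)|$.
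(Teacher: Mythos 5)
A preliminary remark: the paper gives no proof of Lemma~\ref{sml} at all — it is imported verbatim ("in a slightly adapted form") from Section~10 of~\cite{Bet} — so there is no in-paper argument to measure you against; I can only judge your plan on its own terms. Your base case $\kappa=2$ is correct, and induction on $\kappa$ starting from one-dimensional Mellin--Barnes kernels is a sensible route. But the inductive step as written has a genuine gap. You set $y:=\pm_2x_2+\cdots\pm_{\kappa-1}x_{\kappa-1}$, peel off $x_\kappa$ with a one-variable Mellin--Barnes formula producing a factor "$\Gamma(v_\kappa)\,y^{v_1+(\mathrm{shift})-1}$", and then assert that "the remaining power of $y$ still satisfies the hypotheses of the $(\kappa-1)$-variable lemma". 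This fails whenever $y\le 0$: the inductive hypothesis only represents $y^{w-1}\chi_{\R_{>0}}(y)$, which vanishes there, whereas the left-hand side $(y\pm_\kappa x_\kappa)^{v_1-1}\chi_{\R_{>0}}(y\pm_\kappa x_\kappa)$ is nonzero on the region $y<0<y+x_\kappa$ (take $\pm_\kappa=+1$ and $x_\kappa>|y|$). Moreover there is no single one-variable kernel of the form you describe that is valid for both signs of $y$ — a complex power of a negative $y$ is not even well defined. To close the induction you must split according to $\sgn(y)$, use $(1+x_\kappa/y)^{v_1-1}$ in one regime and $(1-|y|/x_\kappa)^{v_1-1}\chi_{(0,1)}(|y|/x_\kappa)$ in the other, and invoke a reflected version of the lemma (with $v_1$ attached to $V^+_\epsilon$ rather than $V^-_\epsilon$) for the branch $y<0$. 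This case split is precisely what produces the two factors $\Gamma(V^+_\epsilon)\Gamma(V^-_\epsilon)$ in the denominator of $\Psi_{\epsilon,B}$; naming "sign combinatorics" as the obstacle does not resolve it.

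Two smaller inaccuracies. First, absolute convergence of the $(\kappa-1)$-fold integrals is not supplied by the decay of $G$: that factor decays only in $|\Im(B+1-s_1-\cdots-s_\kappa)|=|t_1+\cdots+t_\kappa|$, which vanishes on a large subset of the contour. There is in fact no net exponential growth to cancel, since $|\Im V^+_\epsilon|+|\Im V^-_\epsilon|\le\sum_i|t_i|$ makes the exponential factors in the Gamma quotient combine to something at most $1$; convergence is a polynomial-decay question controlled by~\eqref{fego}, and it is exactly the hypothesis $\Re(v_1)>\frac{\kappa}{2}+\frac12$ that makes the integral (barely) absolutely convergent. Second, the explanation that $\pm_i=-1$ forces $\nu_i=0$ because "$\Gamma(V^-_\epsilon)^{-1}$ has no poles" is not meaningful as stated ($1/\Gamma$ is entire everywhere); the constraint arises because the multinomial shifts $\nu_i$ are inserted only into the arguments that enter $V^+_\epsilon$, cf.\ the Beta-function identity~\eqref{beid} used later in the paper. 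The Stirling derivation of~\eqref{fego} is right in outline but still needs the bookkeeping for the two possible signs of $\tfrac12-\Re V^\pm_\epsilon$.
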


\begin{remark}
If $\epsilon=(-1,\dots,-1)$, then $\Psi_{\epsilon,B}$ has to be interpreted as being identically zero.
\end{remark}
\begin{remark}\label{polpsi}
The function $\Psi_{\epsilon,B}(s_1,\dots,s_\kappa )$ has poles at  $s_i\in\Z_{\leq0}$ and at $s_1+\dots+s_k=B+1$.
\end{remark}
\begin{remark}
As a function $G$ in this case we take $G(s):={\xi(\frac 12+s)}/{\xi(\frac12)}$.
\end{remark}

We apply Lemma~\ref{sml} to~\eqref{tgrso} with $\epsilon:=(\sign a_1,\dots,\sign a_k)$, $v_1=1-\alpha_1-u_1+\frac w2$ and $B=4k$, so that by our choice for the lines of integration~\eqref{fcolio}, we have $\Re(v_1)= 1+4k-\eps$. 
Notice that thanks to~\eqref{fego} we don't have problems of convergence of the integrals.
We obtain,
\est{
K_{\balpha,\bbeta}(s)=\sum_{\substack{\nu=(\nu_2,\dots,\nu_k)\in\Z_{\geq0}^{k-1},\\\nu_2+\cdots+\nu_k=4k,\\\nu_{i}=0\text{ if $\pm_i=-1$}}}\frac{(4k)!}{\nu_2!\cdots \nu_k!} (R_{\nu;\balpha,\bbeta}(s)-R'_{\nu;\balpha,\bbeta}(s))
}
where, after opening the Ramanujan sum and  summing over $n_2,\dots,n_k$,
\est{
R_{\nu;\balpha,\bbeta}(s)
&:=\sum_{\ell\geq1}\sumstar_{h\mod \ell}\frac1{(2\pi i)^{k+1}}\int_{(c_w,c_{u_1},c_{v_2},\dots,c_{v_k})}\frac{ |a_1|^{-1+\alpha_1+u_1-\frac w2} N_1^{u_1-s}}{(\ell/(a_1,\ell))^{1-\alpha_1+\beta_1+w}}\tilde P(u_1-s) \times\\
&\quad\times\Psi_{\epsilon,4k}(1-\alpha_1-u_1+\tfrac w2,v_2,\dots,v_k)\zeta(1-\alpha_1+\beta_1+w)\frac{G_{\alpha_1,\beta_1}(w)}wdwdu_1\\
&\quad \times \prod_{i=2}^k\frac{|a_i|^{\nu_i-v_i}}{2\pi i}\int_{(c_{u_i})}D_{\alpha_i,\beta_i}(u_i+v_i-\nu_i,\tfrac {ha_i}{\ell})\tilde P(u_i-s)N_i^{u_i-s}du_idv_i\\
}
with $c_{v_2},\dots,c_{v_k}=\frac \eps k$, and where $R'_{\nu;\balpha,\bbeta}(s)$ is defined in the same way, but with lines of integrations $c_{v_i}'=1/2+\eps-\min(\Re(\alpha_i),\Re(\beta_i))$ for $i=2,\dots,k$.
We notice that by our choices for the lines of integration we have that the sum of the arguments of the function $\Psi_{\epsilon,4k}$ in $R_{\nu;\balpha,\bbeta}(s)$ has real part $4k+1-\frac\eps k$ and so is less than $4k+1$ as needed for the application of Lemma~\ref{sml}
 (whereas for $R_{\nu;\balpha,\bbeta}(s)$ one has that such real part is (much) larger than $4k+1$).

Now, $R'_{\nu;\balpha,\bbeta}$ can be bounded trivially by moving the line of integrations $c_{u_i}$ to $c_{u_i}=\frac12+\nu_i$ for $i= 2,\dots,k$, $c_w$ to $c_{w}=1+|\Re(\alpha_1-\beta_1)|+2\eps$, and $c_{u_1}$ to $
c_{u_1} =1-6k-\Re(\alpha_1)+\frac{c_w}2$ without passing through any pole (cf. Remark~\ref{polpsi}).
We obtain that $R'_{\nu;\balpha,\bbeta}(s)$ is bounded by
\est{
R'_{\nu;\balpha,\bbeta}(s)&\ll_\eps (\tfrac k\eps(1+|s|))^{Ak} \frac{N_2^{\frac12+\nu_2-\sigma}\cdots N_k^{\frac12+\nu_k-\sigma} |a_2|^{\nu_2-\frac12}\cdots |a_k|^{\nu_k-\frac12}}{N_1^{6k-1+\frac12|\Re(\alpha_1+\beta_1)|-\frac\eps2+\sigma}}\int_{(c_w,c_{u_1},c_{v_2},\dots,c_{v_k})}|G_{\alpha_1,\beta_1}(w)|\\
&
\quad\times  |a_1|^{-1+\Re(\alpha_1+u_1-\frac w2)} |\tilde P(u_1-s)\Psi_{\epsilon,4k}(1-\alpha_1-u_1+\tfrac w2,v_2,\dots,v_k)|dwdu_1dv_2\cdots dv_k|,\\
&\ll (1+|\Im(\alpha_1)|)^{6k}X_{s,k,\ba,\eps}^{Ak}\frac{N_2^{\frac12+\nu_2-\sigma}\cdots N_k^{\frac12+\nu_k-\sigma}}{N_1^{6k-1+\frac12|\Re(\alpha_1+\beta_1)|-\frac\eps2+\sigma}}\ll \frac{(1+|\Im(\alpha_1)|)^{6k}X_{s,k,\ba,\eps}^{Ak}N_1^{-k}}{(N_1\cdots N_k)^{\sigma-\frac12}}
}
by~\eqref{fnn} and~\eqref{fego} with $X_{s,k,\ba,\eps}$ as in~\eqref{dfxy},  and where for the last bound we used that
 $N_1$ is the largest of the $N_i$ and $\nu_2+\cdots+\nu_k=4k$. Thus, summarizing this section we proved
\es{\label{fs}
 A_{\balpha,\bbeta}(s)=\sum_{\substack{\{\alpha_1^*,\beta_1^*\}=\{\alpha_1,\beta_1\},\\ (\alpha_i^*,\beta_i^*)=(\alpha_i,\beta_i)\ \forall i\neq 1}} \sum_{\substack{\nu=(\nu_2,\dots,\nu_k)\in\Z_{\geq0}^{k-1},\\\nu_2+\cdots+\nu_k=4k,\\\nu_{i}=0\text{ if $\pm_i=-1$}}}\frac{(4k)!}{\nu_2!\cdots \nu_k!}  R_{\nu;\balpha^*,\bbeta^*}(s) +E_3(s),
}
where $E_3(s)$ satisfies
\es{\label{be1}
E_3(s)\ll Y_{\balpha,\bbeta}^{6k}X_{s,k,\ba,\eps}^{Ak}N_1^{-k}(N_1\cdots N_k)^{\frac12-\sigma}
}
with $Y_{\balpha,\bbeta}$ as in~\eqref{dfxy}.
\subsection{Picking up the poles of the Estermann functions}
Next, after moving $c_w$ and $c_{u_1}$ to ensure the convergence of the sum over $\ell$, we move the line of integration $c_{u_i}$ to $c_{u_i}=-\max(\Re(\alpha_i),\Re(\beta_i))-2\frac \eps k+\nu_i$  for each $i=2,\dots,k$ , passing through the poles (cf. Lemma~\ref{lest}) of the Estermann functions. We obtain:
\es{\label{gweo}
R_{\nu;\balpha,\bbeta}(s)=\sum_{\substack{I\cup J=\{2,\dots,k\},\\ I\cap J=\emptyset}}\sum_{\substack{\{\alpha_i^*,\beta_i^*\}=\{\alpha_i^*,\beta_i^*\}\ \forall i\in I,\\ (\alpha_j^*,\beta_j^*)=(\alpha_j,\beta_j)\ \forall j\in J\cup\{1\}}}\prod_{i\in I}\zeta(1-\alpha_i+\beta_i)S_{I;\nu;\balpha^*,\bbeta^*}(s),
}
where after changing the order of sums and integrals (as can by done by the absolute convergence of the sum and integrals)
\es{\label{dfso}
&S_{I;\nu;\balpha,\bbeta}(s):=\frac1{(2\pi i)^{k+1}}\int_{(c_w,c_{u_1},c_{v_2},\dots,c_{v_k})}\hspace{-3em}\Psi_{\epsilon,4k}(1-\alpha_1-u_1+\tfrac w2,v_2,\dots,v_k)\zeta(1-\alpha_1+\beta_1+w)\\
&\hspace{1.5em} \times \frac{G_{\alpha_1,\beta_1}(w)}w\tilde P(u_1-s)N_1^{u_1-s}du_1\Big(\prod_{i\in I}\tilde P(1-\alpha_i-v_i+\nu_i-s)N_i^{1-\alpha_i-v_i+\nu_i-s}dv_i\Big)\\
&\hspace{1.5em}\times |a_1|^{-1+\alpha_1+u_1-\frac w2}|a_2|^{\nu_2-v_2}\cdots |a_{k}|^{\nu_k-v_k}\Big(\prod_{j\in J}\frac1{2\pi i}\int_{(c_{u_j})}\tilde P(u_j-s)N_i^{u_j-s}\Big)\\
&\hspace{1.5em}\times\sum_{ \ell}\frac{(\ell,a_1)^{w }\prod_{i\in I\cup\{1\}}(a_i,\ell)^{1-\alpha_i+\beta_i}}{\ell^{w+\sum_{i\in I\cup\{1\}}(1-\alpha_i+\beta_i)}}dw\hspace{-0.4em}\sumstar_{h\mod \ell}\prod_{j\in J} D_{\alpha_j,\beta_j}(u_j+v_j-\nu_j,\tfrac {ha_j}{\ell})du_jdv_j\\
}
and the lines of integrations can be taken to be
\es{\label{cloi}
&c_{v_2}=\cdots =c_{v_k}=\tfrac\eps k,\qquad c_{u_j}=-\max(\Re(\alpha_j),\Re(\beta_j))-2\tfrac \eps k+\nu_j\ \forall j\in J, \\
& c_w=2\eps +\max\Big(0, 1+|J|-|I|+\sum_{i=1}^k |\Re(\alpha_i-\beta_i)|\Big),\\
&c_{u_1}=-4k-\Re(\alpha_1)+\tfrac {c_w}2+\eps.
}
Notice that with this choice the sum over $\ell$ converges absolutely by the convexity bound~\eqref{convbo} for the Estermann function.

We will treat $S_{I;\nu;\balpha,\bbeta}$ differently depending on whether $|I|\leq |J|$ or $|I|>|J|$.

\subsection{The case  $|I|\leq |J|$}
If $|I|\leq |J|$ (or, equivalently, $|J|\geq \frac{k-1}2$), then we use the following lemma, whose proof we postpone until the end of this subsection.
\begin{lemma}\label{mlb}
Let $\ba \in\Z_{\neq0}^\kappa$ and $\bgamma,\bdelta\in\C^\kappa$.
Let $\mathfrak S$ be the meromorphic function defined by
\est{
\mathfrak S(z):=\sumstar_{h\mod \ell}\prod_{j=1}^\kappa D_{\gamma_j,\delta_j}(z, \tfrac { ha_j}{\ell}). 
}
Then 
$$\mathfrak S(z)=\mathfrak S^*(z)+\mathfrak S^{**}(z)$$ 
where $\mathfrak S^{*}(z)$ is holomorphic on $\Re(z)\leq -\max_{i=1}^\kappa\max(\Re(\gamma_i),\Re(\delta_i))$ and  $\mathfrak S^{**}(z)=0$ if $\kappa=1$ and otherwise $\mathfrak S^{**}(z)$ is holomorphic on $\Re(z)\leq \frac1\kappa-\frac\eps \kappa-\frac1\kappa\sum_{i=1}^\kappa\max(\Re(\gamma_i),\Re(\delta_i)).$ Furthermore, if $|\Re(\gamma_i)|,|\Re(\delta_i)|\leq \frac12$ and  
\est{
\Re(z)\leq -\frac\eps {3\kappa}-\max(\Re(\gamma_i),\Re(\delta_i))\ \forall i=1,\dots,\kappa,\qquad u\geq 1+\kappa-2\kappa\Re(z)-\sum_{j=1}^\kappa\Re(\gamma_j+\delta_j)+\eps,
}
then
\es{\label{awem2}
\sum_{\ell\geq 1} \ell^{-u}|\mathfrak S^*(z)|\ll_\eps  |a_1\cdots a_\kappa|   \pr{\frac{A \kappa}\eps}^{4\kappa}(1+A|\Re(z)|)^{2\kappa |\Re(z)|}\prod_{i=1}^\kappa(1+|\gamma_i|+|\delta_i|+|\Im(z)|)^ {2-2\Re(z)}.
}
Moreover, if $|\Re(\gamma_i)|,|\Re(\delta_i)|\leq\frac1{2(\kappa-1)}$ and
\est{
\Re(z)\leq \frac1\kappa-\frac\eps \kappa-\frac1\kappa\sum_{i=1}^\kappa\max(\Re(\gamma_i),\Re(\delta_i)),\qquad \Re(u) \geq2+\kappa-2\kappa\Re(z)-\sum_{j=1}^\kappa\Re(\gamma_j+\delta_j)+\eps,
}
then
\es{\label{awem}
\sum_{\ell\geq 1} \ell^{-u}|\mathfrak S^{**}(z)|\ll_\eps  \pr{\frac{A \kappa}\eps}^{4\kappa}(1+A|\Re(z)|)^{2\kappa |\Re(z)|}\prod_{i=1}^\kappa(1+|\gamma_i|+|\delta_i|+|\Im(z)|)^ {2-2\Re(z)}.
}
\end{lemma}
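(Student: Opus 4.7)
The proof decomposes $\mathfrak S(z)$ into an off-diagonal remainder $\mathfrak S^{*}$ and a diagonal polar part $\mathfrak S^{**}$ using the polar structure from Lemma~\ref{lest}. For $\kappa = 1$, we have $\mathfrak S(z) = \sumstar_{h \mod \ell} D_{\gamma_1, \delta_1}(z, ha_1/\ell) = \sum_{n \geq 1}\tau_{\gamma_1, \delta_1}(n) c_\ell(na_1)/n^z$ by Ramanujan's identity, giving explicit meromorphic continuation with singularities only at $z \in \{1 - \gamma_1, 1 - \delta_1\}$, both outside $\Re(z) \leq -\max(\Re(\gamma_1), \Re(\delta_1))$. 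We therefore set $\mathfrak S^{*}(z) := \mathfrak S(z)$ and $\mathfrak S^{**}(z) := 0$; the bound~\eqref{awem2} follows from~\eqref{convbo} on $D_{\gamma_1, \delta_1}$ combined with $|c_\ell(na_1)| \leq (na_1, \ell)$.

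For $\kappa \geq 2$, set $\ell_j := \ell/(a_j, \ell)$ and use Lemma~\ref{lest} to decompose $D_{\gamma_j, \delta_j}(z, ha_j/\ell) = \ell_j^{1 - \gamma_j - \delta_j - 2z}\zeta(z+\gamma_j)\zeta(z+\delta_j) + \widetilde D_{\gamma_j, \delta_j}(z, ha_j/\ell)$, with $\widetilde D_{\gamma_j, \delta_j}$ entire in $z$. Expanding the $\kappa$-fold product over subsets $T \subseteq \{1, \ldots, \kappa\}$ indicating which factors contribute the polar part, and summing over $h$, we define $\mathfrak S^{*}$ as the $T = \emptyset$ contribution (only $\widetilde D$'s) and $\mathfrak S^{**}$ as the sum over $T \neq \emptyset$. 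Since each $\widetilde D_{\gamma_j, \delta_j}$ is entire, $\mathfrak S^{*}$ is entire in $z$, so in particular holomorphic on $\Re(z) \leq -\max_i\max(\Re(\gamma_i), \Re(\delta_i))$. For $\mathfrak S^{**}$, the singularities come only from the explicit $\zeta(z+\gamma_j)\zeta(z+\delta_j)$ factors at $z = 1 - \gamma_j$ or $z = 1 - \delta_j$; using $|\Re(\gamma_i)|, |\Re(\delta_i)| \leq \tfrac{1}{2(\kappa-1)}$, we have $\Re(1 - \gamma_j) \geq 1 - \tfrac{1}{2(\kappa-1)} > \tfrac{1}{\kappa}$ for $\kappa \geq 2$, yielding holomorphy of $\mathfrak S^{**}$ on the stated region.

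For the bounds, we apply~\eqref{convbo} to each $D_{\gamma_j, \delta_j}$ with $\delta = \eps/(3\kappa)$, and the convexity bound for $\zeta$ to estimate the polar part, obtaining $|\widetilde D_{\gamma_j, \delta_j}(z, \cdot)| \ll \ell_j^{1 - \Re(\gamma_j + \delta_j) - 2\Re(z)}(1 + |\gamma_j| + |\delta_j| + |\Im(z)|)^{2 - 2\Re(z)}$ in our region. For $\mathfrak S^{*}$, expanding each $\widetilde D_{\gamma_j, \delta_j}$ as a Dirichlet series and summing $\sumstar_{h \mod \ell}$ produces Ramanujan sums $c_\ell(\sum_j n_j a_j)$, which by $|c_\ell(m)| \leq d(\ell)(m, \ell)$ give a sharper $\ell$-bound than the trivial $\phi(\ell)$. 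The sum $\sum_\ell \ell^{-u}$ then yields the convergence condition of~\eqref{awem2}, with $|a_1 \cdots a_\kappa|$ coming from bounding $(a_1 \cdots a_\kappa, \ell) \leq |a_1 \cdots a_\kappa|$ after separating out the $(a_j, \ell)$-factors in $\ell_j = \ell/(a_j, \ell)$. For $\mathfrak S^{**}$, each polar factor $\ell_j^{1 - \gamma_j - \delta_j - 2z}$ contributes an extra positive $\ell$-power, raising the $\Re(u)$ threshold by one and yielding~\eqref{awem}; the diagonal nature of the contribution (where the Ramanujan sum collapses to $\phi(\ell)$ for the polar-part factors) removes the need for the $|a_1 \cdots a_\kappa|$ factor.

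The main obstacle will be the careful bookkeeping distinguishing the off-diagonal contribution to $\mathfrak S^{*}$ (where Ramanujan sums save a factor $\ell/(a_1 \cdots a_\kappa, \ell)$) from the diagonal contribution to $\mathfrak S^{**}$ (where the full $\phi(\ell)$ is retained), in particular tracking the gcd factors responsible for the appearance of $|a_1 \cdots a_\kappa|$ in~\eqref{awem2} but not in~\eqref{awem}. A secondary challenge is the $\kappa$-uniformity of the constants, notably the factor $(A\kappa/\eps)^{4\kappa}$ arising from the $\kappa$-fold application of~\eqref{convbo} with $\delta \sim \eps/\kappa$, together with the $|\Re(z)|$-dependent factor $(1 + A|\Re(z)|)^{2\kappa|\Re(z)|}$ coming from Stirling's asymptotic for the gamma factors implicit in~\eqref{convbo}.
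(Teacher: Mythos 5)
There is a genuine gap: your decomposition of $\mathfrak S$ is not the one that makes \eqref{awem2} true. You split each factor as $D_{\gamma_j,\delta_j}=[\text{polar part}]+\widetilde D_{\gamma_j,\delta_j}$ and take $\mathfrak S^*$ to be the all-$\widetilde D$ product. The paper instead applies the functional equation \eqref{ffeff} to \emph{every} factor, expands each dual Estermann function $D_{-\gamma_j,-\delta_j}(1-z,\cdot)$ as a Dirichlet series (legitimate there, since $\Re(1-z)$ is large), executes the $h$-sum to produce $c_\ell(\rho)$ with $\rho$ a linear form in the dual variables $m_1,\dots,m_\kappa$, and defines $\mathfrak S^{**}$ as the diagonal $\rho=0$ and $\mathfrak S^*$ as the off-diagonal $\rho\neq0$. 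These two splittings do not coincide: subtracting the $h$-independent polar part of each individual factor does not remove the diagonal of the product. Concretely, for $\kappa=2$, $a_1=1$, $a_2=-1$, $\gamma_j=\delta_j=0$, the diagonal $m_1=m_2$ contributes to $\sumstar_h D(z,\tfrac h\ell)D(z,-\tfrac h\ell)$ a term of size $\varphi(\ell)\,\ell^{2(1-2\Re z)}\asymp \ell^{3-4\Re z}$ (the Ramanujan sum degenerates to $c_\ell(0)=\varphi(\ell)$, with no cancellation), and the cross terms $\ell^{1-2z}\zeta(z)^2\sumstar_h D(z,\pm\tfrac h\ell)$ and $\varphi(\ell)\ell^{2-4z}\zeta(z)^4$ produced by your subtraction do not cancel it (they involve $\zeta(1-z)^4$ rather than $\zeta(2-2z)^4/\zeta(4-4z)$). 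Hence your $\mathfrak S^*$ is still of size $\ell^{\kappa+1-2\kappa\Re z-\sum_j\Re(\gamma_j+\delta_j)}$ and $\sum_\ell\ell^{-u}|\mathfrak S^*(z)|$ diverges at the threshold $u=1+\kappa-2\kappa\Re z-\sum_j\Re(\gamma_j+\delta_j)+\eps$ of \eqref{awem2}; only the larger threshold of \eqref{awem} would be attainable. Since the whole point of the two-tier statement is that the off-diagonal piece converges for $u$ one power of $\ell$ earlier (this is exploited in the $|I|\le|J|$ case, where $c_w$ is moved differently for $S^*$ and $S^{**}$), this defect is fatal to the argument as proposed.

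Two further issues, secondary but worth flagging. First, you invoke the convexity bound \eqref{convbo} and a "Dirichlet series expansion of $\widetilde D$" at the point $z$ itself, but \eqref{convbo} is only valid in the strip \eqref{rconvbo}, and no Dirichlet series for $D$ or $\widetilde D$ converges at $\Re(z)\le-\max(\Re\gamma_i,\Re\delta_i)-\eps/(3\kappa)$, let alone at the very negative $\Re(z)$ the lemma permits (whence the factor $(1+A|\Re z|)^{2\kappa|\Re z|}$); every estimate must go through the functional equation plus Stirling for the $\chi_{\pm1}$ factors. Second, the source of the factor $|a_1\cdots a_\kappa|$ in \eqref{awem2} is not the crude inequality $(a_1\cdots a_\kappa,\ell)\le|a_1\cdots a_\kappa|$: in the paper it arises from bounding $|\rho'|\le\kappa m_1|a_1\cdots a_\kappa|$ inside a Cauchy--Schwarz step applied to $\sum_{\rho'\neq0}d(\rho')^2|\rho'|^{-z_1}$ after reducing $\sum_\ell(\ell,\rho)^{1+\eps}\ell^{-u}$ to $\sigma_{1-u+\eps}(\rho')$ and splitting according to the largest $m_i$. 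That off-diagonal analysis is the technical heart of \eqref{awem2} and is absent from your sketch.
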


We apply this Lemma with $\kappa=|J|$, splitting $S_{I;\nu;\balpha,\bbeta}(s)$ into
\est{
S_{I;\nu;\balpha,\bbeta}(s)=S^*_{I;\nu;\balpha,\bbeta}(s)+S^{**}_{I;\nu;\balpha,\bbeta}(s)
}
in the way suggested by the notation, with $S^{**}=0$ if $|J|=1$. For $S^{*}$ we use~\eqref{awem2} with 
\est{
z=0, \qquad \gamma_j=\alpha_j+u_j+v_j-\nu_j, \qquad \delta_j=\beta_j+u_j+v_j-\nu_j
} (since $D_{\alpha,\beta}(w,\frac hk)=D_{\alpha+w,\beta+w}(0,\frac hk)$). We move the line of integration $c_w$ to
\est{
c_w=2\eps + |J|-|I|+\sum_{i=1}^k |\Re(\alpha_i-\beta_i)|\
}
keeping the other ones as in~\eqref{cloi}. Notice that we stay in the region of holomorphicity of $\mathfrak S^*$ and that we can apply~\eqref{awem2} since $-\max(\Re(\gamma_i),\Re(\delta_i))=\frac\eps k\geq\frac{\eps}{3|J|}$ whereas, using the trivial bound $(a_j,\ell)\leq |a_j|$, the condition on $u$ in the Lemma becomes
$$\Re(w)+\sum_{i\in I\cup\{1\}}\Re(1-\alpha_i+\beta_i))\geq 1+|J|-\sum_{j\in J}\Re(2u_j+2v_j-2\nu_j+\alpha_j+\beta_j)+\eps$$
which is verified with our choice of lines of integration. Thus, we have that~\eqref{awem2} gives 
\est{
S^{*}&\ll_\eps   \pr{\frac{ k}\eps}^{Ak} |a_2|^{2+\nu_2}\cdots |a_{k}|^{2+\nu_k}\int_{(c_w,c_{u_1},c_{v_2},\dots,c_{v_k})} |\Psi_{\epsilon,4k}(\tfrac12-\alpha_1-u_1+\tfrac w2,v_2,\dots,v_k)|\\
&\times  |G_{\alpha_1,\beta_1}(w)\tilde P(u_1-s)N_1^{u_1-s}||dwdu_1|\Big(\prod_{i\in I}\big|\tilde P(1-\alpha_i-v_i+\nu_i-s)N_i^{1-\alpha_i-v_i+\nu_i-s}\big||dv_i|\Big)\\
&\times |a_1|^{-1+\Re(\alpha_1+u_1+\frac w2)}\prod_{j\in J}\int_{(c_{u_j})}|\tilde P(u_j-s)N_j^{u_j-s}|(1+|\alpha_j|+|\beta_j|+|u_j-\nu_j+v_j|)^ {2+\frac{2+\eps}{k}} |du_jdv_j|.
}
%
Using~\eqref{fnn} (with $r=5$ and $r=5k$),~\eqref{bndG} (with $r=5k$) and~\eqref{fego}, we obtain
\est{
S^{*}&\ll_\eps  (\tfrac{ k }\eps \max_{i=1}^k|a_i|)^{Ak}    (N_1\cdots N_k)^{-\sigma}
N_1^{-4k+\frac12(|J|-|I|)+\eta_{\balpha,\bbeta}+2\eps}\Big(\prod_{j\in J} N_j^{\nu_j}\Big)\prod_{i\in I} N_i^{1+\nu_i}\\
&\times
\int_{(\cdots)} \frac{(1+|\alpha_1+u_1-\frac w2|)^{4k-\frac12}(1+|s+u_1|)^{-5k}\prod_{i\in I}(1+|s+v_i+\alpha_i|)^{-5}}
{(1+|w|)^{5k}(1+|v_2|)^{\frac12-\frac\eps k}\cdots (1+|v_k|)^{\frac12-\frac\eps k}(1+|\alpha_1+u_1-\frac w2|+|v_2|+\cdots+|v_k|)^{4k-1}}\\
&\hspace{0em}\times \prod_{j\in J}\frac{(1+|\alpha_j|+|\beta_j|+|u_j-\nu_j+v_j|)^ {3}}{(1+|s+u_j|)^{5} }|dv_2\cdots dv_{k}dw| \prod_{j\in J\cup\{1\}}|du_j|\\
&\ll_\eps  X_{s,k,\ba,\eps}^{Ak} Y_{\balpha,\bbeta}^{7k}   (N_1\cdots N_k)^{-\sigma}
N_1^{-4k+\frac12(|J|-|I|)+\eta_{\balpha,\bbeta}+2\eps}\Big(\prod_{j\in J} N_j^{\nu_j}\Big)
\prod_{i\in I} N_i^{1+\nu_i},\\
}
where $\eta_{\balpha,\bbeta}$ is as in~\eqref{xi} and we used 
$$\frac12\sum_{i=1}^k |\Re(\alpha_i-\beta_i)|-\sum_{i\in I}\min(\Re(\alpha_i),\Re(\beta_i))-\sum_{j\in J}\max(\Re(\alpha_j),\Re(\beta_j))\leq 2\eta_{\balpha,\bbeta}.$$
 Now, since $|J|=k-1-|I|$, the above bound implies 
\est{
S^{*}&\ll_\eps   X_{s,k,\ba,\eps}^{Ak} Y_{\balpha,\bbeta}^{7k}    \frac{
N_1^{\frac{k-1}2+\eta_{\balpha,\bbeta}+2\eps}}{ (N_1\cdots N_k)^{\sigma}}\bigg(\prod_j \pr{\frac {N_j}{N_1}}^{\nu_j}\bigg)\prod_i \pr{\frac {N_i}{N_1}}^{1+\nu_i}
\ll_\eps   X_{s,k,\ba,\eps}^{Ak} Y_{\balpha,\bbeta}^{7k}    \frac{
N_1^{\frac{k-1}2+\eta_{\balpha,\bbeta}+2\eps}}{ (N_1\cdots N_k)^{\sigma}},
}
since $\nu_2+\cdots+\nu_k=4k$ and $N_i\leq N_1$ for all $i=2,\dots,k$. 

If $|I|<|J|$ we can bound $S^{**}$ in the same way, using~\eqref{awem} instead of~\eqref{awem2}, with the difference that now we move the line of integrations to
\es{\label{dsdss}
&c_{u_j}=\nu_j+\frac1{|J|}-2\frac\eps {k}-\frac1{|J|}\sum_{r \in J}\max(\Re(\alpha_r),\Re(\beta_r)),\hspace{0.6em} \forall j\in J,\qquad c_{v_2}=\cdots =c_{v_k}=\tfrac\eps k,\\
&c_w=-1+|J|-|I|+\sum_{i=1}^k|\Re(\alpha_j+\beta_j)|+3\eps,\qquad c_{u_1}=-4k-\Re(\alpha_1)+\tfrac {c_w}2+ \eps.
}
(Notice that, with respect to the case of $S^*$, we have essentially moved $c_{u_j}$ to the right by $\frac1{|J|}$,  $c_w$ to the left by $1$ and thus $c_{u_1}$ to the right by $\frac12$). Thus, we obtain
\est{
S^{**}&\ll_\eps   X_{s,k,\ba,\eps}^{Ak} Y_{\balpha,\bbeta}^{7k}   \frac{
N_1^{\frac{k}2-1+\eta_{\balpha,\bbeta}+2\eps}}{ (N_1\cdots N_k)^{\sigma}}\bigg(\prod_{j\in J} \pr{\frac {N_j}{N_1}}^{\nu_j}\bigg)\bigg(\prod_{i\in I} \pr{\frac {N_i}{N_1}}^{1+\nu_i}\bigg)\prod_{j\in J}N_j^\frac1{|J|}.
}
Also, we have
\est{
\Big(\prod_{i\in I}\frac {N_i}{N_1}\Big)\prod_{j\in J}N_j^\frac1{|J|}=N_1^{-\frac{|I|+1}{|J|}}\prod_{i\in I}\pr{\frac {N_i}{N_1}}^{1-\frac1{|J|}}\prod_{i=1}^kN_i^\frac1{|J|}\leq N_1^{-\frac{|I|+1}{|J|}} \prod_{i=1}^kN_i^\frac1{|J|}= N_1^{1-\frac{k}{|J|}} \prod_{i=1}^kN_i^\frac1{|J|},
}
since $|I|=k-1-|J|$, and the maximum value of the expression on the right is obtained for $|J|=k-1$ since $N_1^{k}\geq N_1\cdots N_k$. Thus, 
\est{
S^{**}&\ll_\eps   X_{s,k,\ba,\eps}^{Ak} Y_{\balpha,\bbeta}^{7k}   
N_1^{\frac{k}2-1-\frac1{k-1}+\eta_{\balpha,\bbeta}+2\eps} (N_1\cdots N_k)^{\frac1{k-1}-\sigma}.
}
If $|I|=|J|$, then we cannot move the line of integration $c_w$ as in~\eqref{dsdss} without passing through the pole at $w=0$. Thus, we move $c_w$ to $c_w=\sum_{i=1}^k|\Re(\alpha_j+\beta_j)|+2\eps$ and leave the other lines of integrations as in~\eqref{dsdss}. Bounding trivially we obtain
\est{
S^{**}&\ll_\eps   X_{s,k,\ba,\eps}^{Ak} Y_{\balpha,\bbeta}^{7k}    
N_1^{\eta_{\balpha,\bbeta}-\sigma+2\eps} \prod_{i=2}^k \pr{\frac {N_i}{N_1}}^{\nu_i}\prod_{i\in I} N_i^{1-\sigma}\prod_{j\in J}N_j^{\frac1{|J|}-\sigma}.
}
Now we have $\frac1{|J|}-\sigma\leq 0$ for $\sigma\geq\frac12$ (we can take $|J|>1$ since otherwise $ S^{**}=0$) and $N_1\geq N_i$ for all $i=2,\dots,k$ and so for $\frac12\leq\sigma\leq 1$ we have
\est{
S^{**}&\ll_\eps   
X_{s,k,\ba,\eps}^{Ak} Y_{\balpha,\bbeta}^{7k}    N_1^{\eta_{\balpha,\bbeta}-\sigma+2\eps+|I|(1-\sigma)}=
X_{s,k,\ba,\eps}^{Ak} Y_{\balpha,\bbeta}^{7k}    N_1^{\frac{k-1}2-\frac{k+1}{2}\sigma+\eta_{\balpha,\bbeta}+2\eps}.
}
since in this case $|I|=\frac{k-1}{2}$.
Thus, summarizing, in this subsection we proved that if $|J|\geq |I|$ and $\frac12\leq \sigma\leq 1$, then
\es{\label{is}
S_{I;\nu;\balpha,\bbeta}(s)&\ll_\eps X_{s,k,\ba,\eps}^{Ak} Y_{\balpha,\bbeta}^{7k} N_1^{-1+\eta_{\balpha,\bbeta}+2\eps} \Big(  \frac{N_1^{\frac{k+1}2}+N_1^{\frac{k}2}(N_2\cdots N_k)^{\frac1{k-1}}}{ (N_1\cdots N_k)^{\sigma}}+N_1^{\frac{k+1}2(1-\sigma)}\Big).
}

We conclude the subsection with the proof of Lemma~\ref{mlb}.
\begin{proof}[Proof of Lemma~\ref{mlb}]
First, we write $\ell_j=\ell/(a_j,\ell)$ and $a_j'=a_j/(a_j,\ell)$, so that 
\est{
\mathfrak S(z):=\sumstar_{h\mod \ell}\prod_{j=1}^\kappa D_{\gamma_j,\delta_j}(z, \tfrac { ha_j'}{\ell_j}). 
}
We apply the functional equation~\eqref{ffeff} to each of the Estermann functions, obtaining
\est{
\mathfrak S(z)=\sum_{\eta=(\eta_1,\dots,\eta_\kappa)\in\{\pm 1\}^{\kappa}}\ell_j^{1-2z-\gamma_j-\delta_j}\chi_{\eta_j}(z;\gamma_j,\delta_j) \sumstar_{h\mod \ell}D_{-\gamma_j,-\delta_j}(1-z, \eta_j\tfrac {\overline {h a_j'}}{\ell_j}),
}
where $\overline {ha_j'}$ is the inverse of $ha_j \mod {\ell_j}$. Now we assume  $\Re(z)< -\max_{i=1}^k\max(\Re(\gamma_i),\Re(\delta_i))$ and we expand the Estermann functions into their Dirichlet series and execute the sum over $h$. We obtain
\est{
\mathfrak S(z)&=\sum_{\eta\in\{\pm 1\}^{\kappa}}\ell_j^{1-2z-\gamma_j-\delta_j}\chi_{\eta_j}(z;\gamma_j,\delta_j)  \\
&\quad\times\sum_{\substack{m_1,\dots m_\kappa}} \frac{\tau_{-\gamma_1,-\delta_1}(m_1)\cdots \tau_{-\gamma_\kappa,-\delta_\kappa}(m_\kappa)}{m_1^{1-z}\cdots m_\kappa^{1-z}}c_\ell(\eta_1\overline a_1' (a_1,\ell)m_1+\dots+\eta_\kappa \overline a_\kappa' (a_\kappa,\ell)m_\kappa)\\
&=\sum_{\eta\in\{\pm 1\}^{\kappa}}\ell_j^{1-2z-\gamma_j-\delta_j}\chi_{\eta_j}(z;\gamma_j,\delta_j) \sum_{\substack{m_1,\dots m_\kappa}} \frac{\tau_{-\gamma_1,-\delta_1}(m_1)\cdots \tau_{-\gamma_\kappa,-\delta_\kappa}(m_\kappa)}{m_1^{1-z}\cdots m_\kappa^{1-z}}c_\ell(\rho)\\
}
where 
\est{
\rho:=\eta_1 (a_1,\ell)m_1\frac{a_1'\cdots a_\kappa'}{ a_1'}+\dots+\eta_\kappa (a_\kappa,\ell)m_1\frac{a_1'\cdots a_\kappa'}{ a_\kappa'}. 
}
Then we divide $\mathfrak S(z)$ into  $\mathfrak S(z)=\mathfrak S^{*}(z)+\mathfrak S^{**}(z)$  according to whether $\rho\neq 0$ or $\rho=0$ (notice that $\mathfrak S^{**}(z)=0$ if $\kappa=1$). For the terms with $\rho=0$, we observe that by Lemma~\ref{absc}  we have
\est{
\sum_{\substack{m_1,\dots, m_\kappa\geq1\\ \rho=0}} \frac{\tau_{-\gamma_1,-\delta_1}(m_1)\cdots \tau_{-\gamma_\kappa,-\delta_\kappa}(m_\kappa)}{m_1^{1-z}\cdots m_\kappa^{1-z}}\ll_\eps  \pr{\frac{A \kappa}\eps}^{4\kappa}
}
for 
$$
\Re(z)\leq \frac1\kappa-\frac\eps \kappa-\frac1\kappa\sum_{i=1}^\kappa\max(\Re(\gamma_i),\Re(\delta_i)).
$$
Thus from Stirling's formula in the crude form
\est{
\Gamma(\Re(z)+it)&\ll c^{-1} (1+A|\Re(z)|)^{|\Re(z)|}(1+|\Im(z)|)^{\Re(z)-\frac12}e^{-\frac{\pi}{2}|\Im(z)|},\qquad \Re(z)\geq c>0,
} 
and since $\ell_j\leq \ell$, for $\Im (\gamma_j),\Im(\delta_j)\ll1$ and $|\Re(\gamma_i)|,|\Re(\delta_i)|\leq \frac12$ we have 
\est{
\mathfrak S^{**}(z)&\ll_\eps   \ell^{\kappa+1-2\kappa\Re(z)-\sum_{j\in J}\Re(\gamma_j+\delta_j)}\pr{\frac{A \kappa}\eps}^{4\kappa}\sum_{\eta\in\{\pm 1\}^{\kappa}}|\chi_{\eta_j}(z;\gamma_j,\delta_j)|\\
&\ll_\eps   \ell^{\kappa+1-2\kappa\Re(z)-\sum_{j\in J}\Re(\gamma_j+\delta_j)}\pr{\frac{A \kappa}\eps}^{4\kappa}(1+A|\Re(z)|)^{2\kappa |\Re(z)|}\prod_{j=1}^\kappa(1+|\Im(z)|)^ {1-2\Re(z)-\Re(\gamma_j+\delta_j)}
}
and~\eqref{awem} follows.

In order to prove~\eqref{awem2} it is sufficient to show that for all fixed $\eps>0$ we have
\es{\label{eewe}
Z:=\sum_{\ell}\ell^{-u'}\Big|\sum_{\substack{m_1,\dots, m_\kappa,\\ \rho\neq0}} \frac{\tau_{-\gamma_1,-\delta_1}(m_1)\cdots \tau_{-\gamma_\kappa,-\delta_\kappa}(m_\kappa)}{m_1^{1-z}\cdots m_\kappa^{1-z}}c_\ell(\rho)\Big|\ll |a_1\cdots a_\kappa|  (A\kappa/\eps)^{4\kappa}
}
for $\Re(z)\leq -\frac\eps {3\kappa}-\max_{i}(\max(\gamma_i,\delta_i))$ $u'\geq1+\eps$. 
Moreover, we observe that we can assume $z,\gamma_i,\delta_i$ are real and we can also assume $z=-\frac\eps {3\kappa}-\max_{i}(\max(\gamma_i,\delta_i))$.

Since $|c_\ell(\rho)|\leq \sum_{d|(\rho,\ell)}d\ll_\eps (\rho,\ell)^{1+\eps}$, we have
\est{
Z&\ll_\eps
\sum_{\substack{m_1,\dots, m_\kappa}} \frac{\tau_{-\gamma_1,-\delta_1}(m_1)\cdots \tau_{-\gamma_\kappa,-\delta_\kappa}(m_\kappa)}{m_1^{1-z}\cdots m_\kappa^{1-z}}\sum_{\substack{\ell\geq1,\\\rho\neq0}}\frac{(\ell,\rho)^{1+\eps}}{\ell^u}.\\
}
(Notice that $\rho$ depends on $\ell$.) Now, we write 
\est{
\rho':=\eta_1 d_1m_1\frac{a_1\cdots a_\kappa d_1}{ a_1 d_1\cdots d_\kappa}+\dots+\eta_\kappa d_km_1\frac{a_1\cdots a_\kappa d_\kappa}{ a_\kappa d_1\cdots d_\kappa}
}
for $d_i=(a_i,\ell)$, so that
\est{
\sum_{\substack{\ell\geq1,\\\rho\neq0}}\frac{(\ell,\rho)^{1+\eps}}{\ell^u}=
\sum_{\substack{d_1|a_1,\dots,d_\kappa|a_\kappa,\\\rho'\neq0}}\sum_{\substack{\ell,\\ d_i|\ell,\ (\ell,a_i/d_i)=1}}\frac{(\ell,\rho')^{1+\eps}}{\ell^u}\ll_\eps \frac1{u-1} \sum_{\substack{d_1|a_1,\dots,d_\kappa|a_\kappa,\\\rho'\neq0}} \sigma_{1-u+\eps}(\rho'),
}
where $\sigma_{\alpha}(n):=\sum_{d|n}d^{\alpha}$, since for all $c\in\N$, $u>1$ we have
\est{
\sum_{\ell\geq1}\frac{(\ell,c)^{1+\eps}}{\ell^u}=\sum_{d|c}d^{1+\eps}\sum_{\substack{d|\ell,\\ (c/d,\ell)=1}}\frac{1}{\ell^u}\ll\zeta(1+u)\sum_{d|c}d^{1-u+\eps}\ll_\eps \frac1{u-1}\sigma_{1-u+\eps}(c).
}
Thus, for $u\geq1+\eps$
\es{\label{dadsa}
Z&\ll_\eps  \frac1{u-1} \sum_{\substack{d_1|a_1,\dots,d_\kappa|a_\kappa}} 
\sum_{\substack{m_1,\dots, m_\kappa\geq1\\\rho'\neq0}} \frac{\tau_{-\gamma_1,-\delta_1}(m_1)\cdots \tau_{-\gamma_\kappa,-\delta_\kappa}(m_\kappa)}{m_1^{1-z}\cdots m_\kappa^{1-z}}d(\rho').\\
}
We divide the sum into a sum of $k$ sums, according to which of the $m_i$ is the largest. For the contribution where $m_1\geq \max_{i=2}^\kappa(m_i)$ we observe that, for $z_1:=1-z-\max(\gamma_1,\delta_1)>1$ we have
\est{
\sum_{\substack{m_1\geq \max_{i=2}^\kappa(m_i),\\ \rho'\neq0}}\frac{d(\rho')\tau_{-\gamma_1,-\delta_1}(m_1)}{m_1^{1-z}}&\ll \sum_{\substack{m_1\geq \max_{i=2}^\kappa(m_i),\\ \rho'\neq0}}\frac{d(\rho')d(m_1)}{m_1^{z_1}}\\
&\ll \Big(\sum_{\substack{m_1\geq \max_{i=2}^\kappa(m_i),\\ \rho'\neq0}}\frac{d(\rho')^2}{m_1^{z_1}}\Big)^\frac12 \Big(\sum_{\substack{m_1\geq \max_{i=2}^\kappa(m_i),\\ \rho'\neq0}}\frac{d(m_1)^2}{m_1^{z_1}}\Big)^{\frac12},\\
}
by the Cauchy-Schwarz inequality. Then, since $|\rho'|\leq \kappa m_1 |a_1\cdots a_\kappa|$ we have that the above sum is
\est{
&\leq \Big( |\kappa a_1\cdots a_\kappa|^{z_1}\sum_{\substack{\rho'\in\Z_{\neq0}}}\frac{d(\rho')^2}{{|\rho'|}^{z_1}}\Big)^\frac12 \Big(\sum_{m_1\geq 1}\frac{d(m_1)^2}{m_1^{z_1}}\Big)^{\frac12}\leq | \kappa m_1 a_1\cdots a_\kappa|^\frac12\Big(2\sum_{m_1\geq 1}\frac{d(m_1)^2}{m_1^{z_1}}\Big)\\
&\ll |\kappa m_1 a_1\cdots a_\kappa|^\frac12(z_1-1)^{-4}
}
since $z_1\leq2$ and $\sum_{\substack{n\geq1}}\frac{d(n)^2}{n^{2s}}=\frac{\zeta(2s)^4}{\zeta(4s)}$. 
Thus,~\eqref{dadsa} can be bounded by
\est{
Z\ll_\eps \frac{\kappa ^2}{(u-1)} |a_1\cdots a_\kappa|\prod_{i=1}^k(-z-\max(\gamma_i,\delta_i))^{-4}
}
and so~\eqref{eewe} follows.
\end{proof}

\subsection{The case $|I|>|J|$}
Here we treat $S_{I;\nu;\balpha,\bbeta}(s)$, which was defined in~\eqref{dfso}, in the case $|I|>|J|$. First, we move the lines of integration $c_{w}$ and $c_{u_1}$ in~\eqref{dfso} to
\est{
&c_{w}=2\eps+|\Re(\alpha_1-\beta_1)|, \qquad c_{u_1}=-4k-\Re(\alpha_1)+\tfrac {c_w}2+ \eps \\
&c_{v_2}=\cdots =c_{v_k}=\tfrac\eps k,\qquad c_{u_j}=-\max(\Re(\alpha_j),\Re(\beta_j))-2\tfrac \eps k+\nu_j\ \forall j\in J. \\
}
Bounding as before (in this case one could also simply use the convexity bound for the Estermann functions) one obtains
\est{
S_{I;\nu;\balpha,\bbeta}(s)
&\ll 
X_{s,k,\ba,\eps}^{Ak} Y_{\balpha,\bbeta}^{7k}  N_1^{-1+\eta_{\balpha,\bbeta}+2\eps}(N_1\cdots N_k)^{1-\sigma}\bigg(\prod_{i\in I}\pr{\frac{N_i}{N_1}}^{\nu_i}\bigg){\prod_{j\in J}\frac{N_j^{\nu_j-1}}{N_1^{\nu_j}}}.\\
}
In particular, if $\nu_j>0$ for some $j\in J$, then the above bound implies
\est{
S_{I;\nu;\balpha,\bbeta}(s)
&\ll 
X_{s,k,\ba,\eps}^{Ak} Y_{\balpha,\bbeta}^{7k}  N_1^{-2+\eta_{\balpha,\bbeta}+2\eps}(N_1\cdots N_k)^{1-\sigma}.
}

For the terms with $\nu_j=0$ for all $j\in J$, we take $j^*\in J$ and move the lines of integrations $c_{u_{j^*}}$ and $c_{v_{j^*}}$ to the left and to the right by $1$ respectively, 
picking up the residue from the simple pole of $\Psi_{\epsilon,4k}$  at $v_{j^*}=0$. 
In the contribution coming from the integrals of the new lines of integration we move $c_{u_1}$ to the right by $1$ (as we can now do without passing through other poles of $\Psi_{\epsilon,4k}$) so that bounding trivially we obtain a contribution which is
\est{
&\ll 
X_{s,k,\ba,\eps}^{Ak} Y_{\balpha,\bbeta}^{7k}  N_1^{-2+\eta_{\balpha,\bbeta}+2\eps}(N_1\cdots N_k)^{1-\sigma}\bigg(\prod_{i\in I}\pr{\frac{N_i}{N_1}}^{\nu_i}\bigg)N_{j^*}\prod_{j\in J}N_j^{-1}\\
&\ll 
X_{s,k,\ba,\eps}^{Ak} Y_{\balpha,\bbeta}^{7k}  N_1^{-2+\eta_{\balpha,\bbeta}+2\eps}(N_1\cdots N_k)^{1-\sigma}.
}
We repeat this for all $j\in J$ obtaining 
\es{\label{stt}
S_{I;\nu;\balpha,\bbeta}(s)
&=T_{I;\nu;\balpha,\bbeta}(s)+O\bigg(
X_{s,k,\ba,\eps}^{Ak} Y_{\balpha,\bbeta}^{7k}  \frac{(N_1\cdots N_k)^{1-\sigma}}{N_1^{2-\eta_{\balpha,\bbeta}-2\eps}}\bigg)
}
where $T_{I;\nu;\balpha,\bbeta}(s)$ is obtained by $S_{I;\nu;\balpha,\bbeta}(s)$ by taking the residue in at $v_j=0$ for all $j\in J$, that is
\est{
&T_{I;\nu;\balpha,\bbeta}(s):=\frac1{(2\pi i)^{k+1}}\int_{(c_w,c_{u_1})}\Big(\prod_{i\in I}\int_{(c_{v_i})}\Big)\Psi'_{I_1,\epsilon}(\bv_{I_1})\frac{\tilde P(u_1-s)N_1^{u_1-s}}{|a_1|^{1-\alpha_1-u_1+\frac w2}}\\
&\qquad\times  \frac{G(B+\alpha_1+u_1-\tfrac w2-\sum_{i\in I}v_i  )}{B+\alpha_1+u_1-\tfrac w2-\sum_{i\in I}v_i  }\sum_{ \ell\geq1}\frac{\prod_{i\in I_1}(a_i,\ell)^{1-\alpha_i+\beta_i}}{\ell^{w+\sum_{i\in I_1}(1-\alpha_i+\beta_i)}(\ell,a_1)^{-w }}\frac{G_{\alpha_1,\beta_1}(w)}wdu_1\\
&\qquad \times\zeta(1-\alpha_1+\beta_1+w) \Big(\prod_{i\in I}\tilde P(1-\alpha_i-v_i+\nu_i-s)N_i^{1-\alpha_i-v_i+\nu_i-s}|a_i|^{\nu_i-v_i}dv_i\Big)dw\\
&\qquad \times \sumstar_{h\mod \ell}\prod_{j\in J}\int_{(c_{u_j})}\tilde P(u_j-s)N_i^{u_j-s} D_{\alpha_j,\beta_j}(u_j,\tfrac {ha_j}{\ell})du_j.\\
}
with $I_1:=I\cup\{1\}$, $\bv_I:=(v_i)_{i\in I_1}$, where we put $v_1:=1-\alpha_1-u_1+\tfrac w2$ and
\es{\label{dfpsp}
\Psi_{I_1,\epsilon}(\bv_{I_1})&:=\frac{\prod_{i\in I_1}\Gamma(v_i)}{\Gamma(V^+_{I;\epsilon}(\bv_{I_1}))\Gamma(V^-_{I_1;\epsilon}(\bv_{I_1}))},\qquad V^\pm_{I_1;\epsilon}(\bv_{I_1}):=\sum_{\substack{i\in I_1,\\\pm_i1=\pm1}}v_{i}.
}
Notice that in the integral defining $T_{I;\nu;\balpha,\bbeta}(s)$ we have a fast decaying function for each of the variables of integration and so we don't have to worry anymore about the convergence of the integrals.

Next, we move the line of integrations to
\est{
c_{u_j}=1-\max(\alpha_j,\beta_j)-\tfrac\eps k\quad \forall j\in J,
\qquad
c_w=-|I|+\sum_{i=1}^k|\alpha_j-\beta_j|+2\eps
}
moving also $c_{u_1}$ so that we still have $c_{u_1}=-4k-\Re(\alpha_1)+\tfrac {c_w}2+ \eps $. We pass through a simple pole at $w=0$ only, since the pole of $\zeta(1-\alpha_1+\beta_1+w)$ is canceled by the zero of $G_{\alpha_1,\beta_1}(w)$ (cf. Remark~\ref{rmdfG}).
Notice that on the new line of integrations the convexity bound~\eqref{convbo} gives
\est{
D_{\alpha_j,\beta_j}(u_j,\tfrac {ha_j}{\ell})\ll (k/\eps)^{2}(\ell(|u_j|+|\alpha_j|)^\frac12(|u_j|+|\beta_j|)^\frac12)^{|\Re(\alpha_j-\beta_j)|+\frac\eps k}
}
which suffices for the convergence of the sum over $\ell$. Thus the integral on the new lines of integrations gives a contribution bounded by
\est{
&X_{s,k,\ba,\eps}^{Ak} Y_{\balpha,\bbeta}^{7k}  N_1^{-|I|+\eta_{\balpha,\bbeta}+2\eps} (N_1\cdots N_k)^{1-\sigma}.\\
}
Thus, since $|I|>|J|$ with $|I|+|J|=k-1\geq 2$ implies $|I|\geq2$, then by~\eqref{stt} we have
\es{\label{fbs}
S_{I;\nu;\balpha,\bbeta}(s)&=Y_{I;\nu;\balpha,\bbeta}(s)+O\Big(
X_{s,k,\ba,\eps}^{Ak} Y_{\balpha,\bbeta}^{7k} {(N_1\cdots N_k)^{1-\sigma}}{ N_1^{-2+\eta_{\balpha,\bbeta}+2\eps}}\Big)
}
where $Y_{I;\nu;\balpha,\bbeta}(s)$ is the contribution from the residue at $w=0$.

Now, we move the line of integration $c_{u_1}$ to $c_{u_1}=1-\Re(\alpha_1)+\frac\eps k$ and we make the change of variables $v_i\to v_i+\nu_i$ for all $i\in I$ moving the lines of integration $c_{v_i}$ so that we still have $c_{v_i}=\frac\eps{k}$ for all $i\in I$. Since $B=\sum_{i\in I}\nu_i$ (as we only have to consider the terms with $\nu_j=0$ for all $j\in J\cup\{1\}$) we have that the only factor depending on $\nu$ is the function $\Psi_{I_1,\epsilon}'(\bv_i+\bnu)$, where $\bnu=(0,\nu_2,\dots,\nu_k)$. Thus, summing over $\nu$ we are left with
\est{
&\sum_{\substack{\nu=(\nu_2,\dots,\nu_k)\in\Z_{\geq0}^{k-1},\\\nu_2+\cdots+\nu_k=4k,\\\nu_{i}=0\text{ if $\pm_i=-1$ or $i\in J$}}}\frac{(4k)!}{\nu_2!\cdots \nu_k!}\Psi_{I_1,\epsilon}'(\bv_i+\bnu)\\[-1.5em]
&\hspace{14em}=\frac{\prod_{\substack{i\in I^-_{1}}}\Gamma(v_i)}{\Gamma(V^-_{I_1;\epsilon}(\bv_{I_1}))}
\sum_{\substack{\nu_{I}=(\nu_i)_{i\in I_1^+},\,\nu_i\in\Z_{\geq0},\\\sum_{i\in I_1^+}\nu_i=4k}}\frac{(4k)!}{\prod_{i\in I_1^+}\nu_i!}\frac{\prod_{i\in I_1^+}\Gamma(v_i+\nu_i)}{\Gamma(\sum_{i\in I_1^+}(v_i+\nu_i))},
}
where $I_1^{\pm}:=\{i\in I\mid \pm_i1=\pm1\}$. The identity  $\B(s_1+1,s_2)+\B(s_1,s_2+1)=\B(s_1,s_2)$ for the Beta function $\B(s_1,s_2):=\Gamma(s_1)\Gamma(s_2)\Gamma(s_1+s_2)^{-1}$ generalizes to 
\es{\label{beid}
\sum_{\substack {(r_1,\dots,r_m)\in\Z_{\geq0}^m,\\ r_1+\cdots+r_m=r }}\frac{r!}{r_1!\cdots r_m!}\frac{\Gamma(s_1+r_1)\cdots\Gamma(s_m+r_m)}{\Gamma(r+s_1+\cdots+s_m)}=\frac{\Gamma(s_1)\cdots\Gamma(s_m)}{\Gamma(s_1+\cdots+s_m)},
}
for $m,r\geq1$, $s_1,\dots,s_m\in\C$, and so we have
\est{
&\sum_{\substack{\nu=(\nu_2,\dots,\nu_k)\in\Z_{\geq0}^{k-1},\\\nu_2+\cdots+\nu_k=4k,\\\nu_{i}=0\text{ if $\pm_i=-1$ or $i\in J$}}}\frac{(4k)!}{\nu_2!\cdots \nu_k!}\Psi_{I_1,\epsilon}'(\bv_i+\bnu)=\frac{\prod_{\substack{i\in I_{1}}}\Gamma(v_i)}{\Gamma(V^-_{I_1;\epsilon}(\bv_{I_1}))\Gamma(V^+_{I_1;\epsilon}(\bv_{I_1}))}=\Psi_{I_1,\epsilon}'(\bv_i).
}
Thus, after the change of variables $u_j\rightarrow u_j+s$ for all $j\in J\cup\{1\}$ and $1-\alpha_i-v_i-s\to u_i$ for $i\in I$, we obtain
\es{\label{fbs2}
\sum_{\substack{\nu=(\nu_2,\dots,\nu_k)\in\Z_{\geq0}^{k-1},\\\nu_2+\cdots+\nu_k=4k,\\\nu_{i}=0\text{ if $\pm_i=-1$ or $i\in J$}}}\frac{(4k)!}{\nu_2!\cdots \nu_k!} Y_{I;\nu;\balpha,\bbeta}(s) = \zeta(1-\alpha_1+\beta_1) Z_{ I_1;\balpha,\bbeta}(s)
}
where for any set ${\mathcal I}$ with $|\mathcal I|>\frac{k+1}{2}$  we define
\es{\label{dfnz}
Z_{\mathcal I;\balpha,\bbeta}(s)&= \frac{1}{(2\pi i)^{k}}\int_{(c_{u_1},\dots,c_{u_k})}
\Big(\prod_{i=1}^k\tilde P(u_i)N_i^{u_i}\Big)\Psi_{\mathcal I,\epsilon}(\bv_{\mathcal I}')\Big(\prod_{i\in \mathcal I}|a_i|^{-1+\alpha_i+u_i+s}\Big)\frac{G(1-\sum_{i\in I_1}v_i' )}{1-\sum_{i\in I}v_i'  }\\
&\quad \times \sum_{ \ell\geq1}\frac{\prod_{i\in \mathcal I}(a_i,\ell)^{1-\alpha_i+\beta_i}}{\ell^{\sum_{i\in \mathcal I}(1-\alpha_i+\beta_i)}}\sumstar_{h\mod \ell}\prod_{j\in \mathcal J} \Big( D_{\alpha_j,\beta_j}(u_j+s,\tfrac {ha_j}{\ell})\Big)du_1\cdots d u_k)
}
where $\bv'_{\mathcal I}=(v_i')_{i\in \mathcal I}=(1-\alpha_i-u_i-s)_{i\in \mathcal I}$, $\mathcal J:=\{1,\dots,k\}\setminus \mathcal I$ and where we can take the lines of integrations to be
\est{
c_{u_j}=1-\max(\alpha_j,\beta_j)-\sigma-\tfrac\eps k\quad \forall j\in \mathcal J,\quad
c_{u_i}=1-\alpha_i-\sigma-\tfrac\eps k\quad \forall i\in \mathcal I.
}
Notice that for any $i\in \mathcal I$ if we move $c_{u_i^*}$ to $-\alpha_{i^*}-\sigma+ \eps$ we obtain the bound
\es{\label{bfy1}
Z_{\mathcal I;\balpha,\bbeta}(s)\ll 
X_{s,k,\ba,\eps}^{Ak} Y_{\balpha,\bbeta}^{k} (N_1\cdots N_k)^{1-\sigma}
N_{i^*}^{-1+\eta_{\balpha,\bbeta}+\eps}.
}
Also, for all $j^*\in \mathcal J$ (notice we can assume $k\geq4$ since for $k=3$ we have $|\mathcal I|>2$ and so $\mathcal J=\emptyset$) we have
\es{\label{bfy2}
Z_{\mathcal I;\balpha,\bbeta}(s)\ll 
X_{s,k,\ba,\eps}^{Ak} Y_{\balpha,\bbeta}^{k}(N_1\cdots N_k)^{1-\sigma-\frac1k}
(N_{j^*}^{-1+\eta_{\balpha,\bbeta}+\eps})
}
as can be seen by moving the lines of integrations to
\est{
&c_{u_j}=1-\max(\alpha_j,\beta_j)-\sigma-\tfrac1{k}\quad \forall j\in J\setminus \{j^*\},
\qquad c_{u_i}=1-\alpha_i-\sigma-\tfrac1 k+\tfrac\eps k\quad \forall i\in \mathcal I,\\
&c_{u_{j^*}}=-\sigma-\tfrac 1k-\max(\alpha_{j^*},\beta_{j^*})+\eta_{\balpha,\bbeta}.\\
}
(Notice that since $|\mathcal I |\geq [\frac{k+3}2]$, with this choice the sum over $\ell$ is absolutely convergent by the convexity bound~\eqref{convbo}.)
%

To summarize, by~\eqref{fbs} and~\eqref{fbs2} in this section we proved that for $|I|>|J|$
\es{\label{il}
\sum_{\substack{\nu=(\nu_2,\dots,\nu_k)\in\Z_{\geq0}^{k-1},\\\nu_2+\cdots+\nu_k=4k,\\\nu_{i}=0\text{ if $\pm_i=-1$}}}\frac{(4k)!}{\nu_2!\cdots \nu_k!} S_{I;\nu;\balpha,\bbeta}(s) = \zeta(1-\alpha_1+\beta_1) Z_{ I_1;\balpha,\bbeta}(s)+E_4(s)
}
where $E_4(s)$ satisfies
\est{
E_4(s)\ll
X_{s,k,\ba,\eps}^{Ak} Y_{\balpha,\bbeta}^{7k} {(N_1\cdots N_k)^{1-\sigma}}{N_1^{-2+\eta_{\balpha,\bbeta}+2\eps}}
}
and $Z_{ I_1;\balpha,\bbeta}(s)$ satisfies~\eqref{bfy1} and~\eqref{bfy2}.

\subsection{Conclusion of the proof of Lemma~\ref{mmlle}}\label{cl}
By~\eqref{fs},~\eqref{be1},~\eqref{gweo},~\eqref{is} and~\eqref{il}
\es{\label{fsd}
 A_{\balpha,\bbeta}(s)=\sum_{\substack{\mathcal I\cup \mathcal J=\{1,\dots,k\},\\ \mathcal I\cap \mathcal J=\emptyset,\\ 1\in \mathcal I,\ |\mathcal I|>\frac{k+1}2}}\sum_{\substack{\{\alpha_i^*,\beta_i^*\}=\{\alpha_i^*,\beta_i^*\}\ \forall i\in \mathcal I,\\ (\alpha_j^*,\beta_j^*)=(\alpha_j,\beta_j)\ \forall j\in \mathcal J}}\prod_{i\in \mathcal I}\zeta(1-\alpha_i+\beta_i)Z_{\mathcal I;\balpha^*,\bbeta^*}(s),
 +E_5(s),
}
where $E_5(s)$ is an entire function of $s$ satisfying 
\est{
E_5(s)&\ll
X_{s,k,\ba,\eps}^{Ak} Y_{\balpha,\bbeta}^{7k}N_1^{\eta_{\balpha,\bbeta}+2\eps} \Bigg(\frac{N_1^{\frac{k-1}2}}{(N_1\cdots N_k)^{\sigma}}+\frac{N_1^{\frac{k}2-1-\frac1{k-1}}}{(N_1\cdots N_k)^{\sigma-\frac1{k-1}}}\\
&\hspace{11em}+\frac{ (N_1\cdots N_k)^{1-\frac1k-\sigma}}{N_1}+N_1^{\frac{k-1}2-\frac{k+1}{2}\sigma}\Bigg)
}
for $\frac12\leq \sigma\leq 1$. Thus, to obtain~\eqref{vsy} and~\eqref{bvsy} we just need to extend the sum over $\mathcal I$ in~\eqref{fsd} to include also the sets $\mathcal I$ which do not contain $1$ at a cost of an eligible error given by the bound~\eqref{bfy2} with $j^*=1$. Finally, we conclude by observing that the analyticity of $E_1(s)$ for $(s,\balpha,\bbeta)\in\C\times\Omega_k$ (where $\Omega_k$ is defined in~\eqref{dfok}) can be immediately verified from the definition of the various error terms.

\begin{remark}\label{fnlr}
Notice that the main term on the right of~\eqref{fsd} is symmetric in $N_1,\dots,N_k$ and, by the definition~\eqref{dfpsp} of $\Psi_{\mathcal I,\epsilon}$, with respect to $\ba\leftrightarrow -\ba$ (i.e. $\epsilon\leftrightarrow-\epsilon$).
\end{remark}

\section{Proof of Lemma~\ref{mmlle2}}\label{pmmlle2}
As in the proof of Lemma~\ref{mmlle} we assume $N_1$ is the maximum among $N_1,\dots,N_k$. 
Writing the partitions of unity in terms of their Mellin transform, we obtain
\es{\label{fdff}
 A_{\balpha,\bbeta}(s)=\int_{(c_{u_1},\cdots c_{u_k})}
\mathcal A_{\balpha',\bbeta'}(s') 
 \tilde P(u_1)\cdots \tilde P(u_k) N_1^{u_1}\cdots N_k^{u_k}\,du_1\cdots du_k
 }
where $s':=1+s-\sigma+\tfrac\eps k-\eta'_{\balpha,\bbeta}$ with $\eta'_{\balpha,\bbeta}:=\frac1k\sum_{i=1}^k\min(\Re(\alpha_i),\Re(\beta_i))$, $\balpha':=(-1+\alpha_i+u_i-\frac\eps k+\sigma+\eta'_{\balpha,\bbeta})_{i\in\{1,\dots,k\}}$ and $\bbeta':=(-1+\beta_i+u_i-\frac\eps k+\sigma+\eta'_{\balpha,\bbeta})_{i\in\{1,\dots,k\}}$. We move the lines of integration to 
\est{
&c_{u_i}=1-\sigma-\eta'_{\balpha,\bbeta}+\tfrac\eps k\quad\forall i\in\{2,\dots,k\},\qquad
c_{u_1}=-\sigma-\eta'_{\balpha,\bbeta}+\tfrac\eps k
}
so that on the new lines of integration we have $|\Re(\alpha_i')|,|\Re(\beta_i')|\leq \frac1{2(k-1)}$ for $i=2,\dots,k$ and
\est{
\Re(s')\geq1-\frac1k+\frac\eps k-\frac1k\sum_{i=1}^k\min(\Re(\alpha_i'),\Re(\beta_i')).
} 
We cannot apply directly Lemma~\ref{absc} since we would need also $|\Re(\alpha_1')|,|\Re(\beta_1')|\leq\frac1{2(k-1)}$, whereas we have $\Re(\alpha_1')=-1+\Re(\alpha_1)$, $\Re(\beta_1')=-1+\Re(\beta_1)$. However, we observe that
\est{
|\mathcal A_{\balpha,\bbeta}(s')|&\ll_\eps  \sum_{\substack{a_1n_1+\cdots+a_kn_k=0}}\Big|\frac{n_1^{-s_1'+\eps}d(n_2)\cdots d(n_k)}{n_2^{s'_2}\cdots n_{k}^{s'_k}} \Big|\\
&\leq \big(1+(k\max_{i=2}^k|a_i|)^{-s_1'+\eps}\big)  \sum_{\ell=2}^k\sum_{\substack{\max_{j=2}^kn_j=n_\ell}} \Big|\frac{d(n_2)\cdots d(n_k)}{n_\ell ^{s_1'-\eps}n_2^{s'_2}\cdots n_{k}^{s'_k}} \Big|
}
since $(\max_{j=2}^kn_j)\leq n_1\leq k(\max_{j=2}^kn_j)( \max_{i=2}^k|a_i|)$ and where $s_i'=1+\frac\eps k-\eta'_{\balpha,\bbeta}+\min(\Re(\alpha_i),\Re(\beta_i))$ for $i=2,\dots,k$ and $s_1'=\frac\eps k-\eta'_{\balpha,\bbeta}+\min(\Re(\alpha_1),\Re(\beta_1))$. Then, we can proceed exactly as in the proof of Lemma~\ref{absc} obtaining 
\est{
|\mathcal A_{\balpha,\bbeta}(s')|\ll \max_{i=2}^k|a_i|  \pr{\frac{A k}\eps}^{4k}.
}
Thus,~\eqref{fdff} gives
\est{
 A_{\epsilon;\balpha,\bbeta}(s)&\ll_\eps \max_{i=2}^k|a_i|  \pr{\frac{A k}\eps}^{4k} N_1^{-1+k\eta'_{\balpha,\bbeta}} (N_1\cdots N_k)^{1-\sigma}\ll_\eps X_{s,k,\ba,\eps}^{Ak} Y_{\balpha,\bbeta}^{k}(N_1\cdots N_k)^{1-\sigma}
N_{\max}^{-1+\eta_{\balpha,\bbeta}+\eps}
}
since $k\eta'_{\balpha,\bbeta}\leq \eta_{\balpha,\bbeta}$ and the Lemma follows since by~\eqref{bfy1} and~\eqref{bfy2} the main term on the right hand side of~\eqref{mmlle2e} also satisfies the bound~\eqref{tbb}.

\section{The completion of the proof of Theorem~\ref{mtmt}}\label{pcompl}
We define
\est{
E^*(s)=\sumdagger_{\substack{N_1,\cdots, N_k,\\ B_1<B_2}}E_1(s)+\sumdagger_{\substack{N_1,\cdots, N_k,\\ B_1> B_2}}E_2(s)
}
with $E_1(s)$ and $E_2(s)$ as in Lemma~\ref{mmlle} and~\ref{mmlle2},
where
\est{
B_1&:=\frac{N_{\max}^{\frac{k-1}2}}{(N_1\cdots N_k)^{\sigma}}+\frac{N_{\max}^{\frac{k}2-1-\frac1{k-1}}}{(N_1\cdots N_k)^{\sigma-\frac1{k-1}}}+\frac{ (N_1\cdots N_k)^{1-\frac1k-\sigma}}{N_{\max}}+N_{\max}^{\frac{k-1}2-\frac{k+1}{2}\sigma}
\\
B_2&:=(N_1\cdots N_k)^{1-\frac1k-\sigma}N_{\max}^{-1}
}

Since $E_1(s)$ and $E_2(s)$ are entire for all $N_1,\cdots, N_k$ we have that $E^*(s)$ is holomorphic in the half-plane where the above sums converge absolutely (and uniformly in $s$). Thus, writing $N_1\cdots N_k=:N_{\max}^x$ with $1\leq x\leq k$, we see that $E^*(s)$ is holomorphic provided that $\frac12\leq \sigma\leq 1$, $(\balpha,\bbeta)\in\Omega_k$ and
\est{
&\max_{1\leq x\leq k}\min(\max_{i=1,\dots,4}(L_i(x)),(1-\tfrac1k-\sigma)x-1)+\eta_{\balpha,\bbeta}+3\eps\leq 0
}
where 
\est{
&L_1(x):=\tfrac {k-1}2-\sigma x,\quad L_2(x):=\tfrac k2-1-\tfrac1{k-1}-(\sigma-\tfrac1{k-1})x\\
&L_3(x):=(1-\tfrac1k-\sigma)x-1,\quad
L_4(x):=\tfrac{k-1}2-\tfrac{k+1}{2}\sigma.
}
Equivalently, we need 
\est{
M_i:=\max_{1\leq x\leq k}\min(L_i(x),(1-\sigma)x-1)+\eta_{\balpha,\bbeta}+3\eps<0
}
for $i=1,\dots,4$. For $\frac12\leq \sigma\leq 1$ one has
\est{
M_1=M_2=M_4=(1-\sigma)\tfrac{k+1}2-1,\quad M_3=\max(k-2-k\sigma,-\tfrac1k-\sigma).
}
Thus, we need $ 1-\frac{2(1-\eta_{\balpha,\bbeta}-3\eps)}{k+1}\leq \sigma\leq1$ and $(\balpha,\bbeta)\in\Omega_k$. Also, in this strip we have the bound
\es{\label{bffe}
E^*(s)\ll_\eps X_{s,k,\ba,\eps}^{Ak} Y_{\balpha,\bbeta}^{7k}.
}

Finally, for any function $F(u)$ which is analytic and grows at most polynomially on a strip $|\Re(u)|<c$ for some $c>0$ we have
\est{
\sumdagger_N \frac1{2\pi i}\int_{(0)}F(u)\tilde P(u)N^{u} du=F(0).
}
Thus, by the definition~\eqref{dfnz} of $Z_{\mathcal I;\balpha,\bbeta}(s)$, for 
\es{\label{cds}
1-\frac1k+\frac\eps k-\frac1k\sum_{i=1}^k\min(\Re(\alpha_i),\Re(\beta_i))\leq \sigma< 1-\max_{1\leq i\leq k}\max(\Re(\alpha_i),\Re(\beta_i)),\hspace{0.5em} (\balpha,\bbeta)\in\Omega_k
}
we have
\est{
\sumdagger_{N_1,\dots,N_k}\sum_{\substack{\mathcal I\cup \mathcal J=\{1,\dots,k\},\\ \mathcal I\cap \mathcal J=\emptyset,\\ |\mathcal I|>\frac{k+1}{2}}}\sum_{\substack{\{\alpha_i^*,\beta_i^*\}=\{\alpha_i^*,\beta_i^*\}\ \forall i\in \mathcal I,\\ (\alpha_j^*,\beta_j^*)=(\alpha_j,\beta_j)\ \forall j\in \mathcal J}}\Big(\prod_{i\in \mathcal I}\zeta(1-\alpha_i+\beta_i)\Big)Z_{\mathcal I;\balpha^*,\bbeta^*}(s)&=\mathcal W_{\balpha,\bbeta}(s),
}
where $\mathcal W_{\balpha,\bbeta}(s)$ is defined as
\est{
\mathcal W_{\balpha,\bbeta}(s)&:=\sum_{\substack{\mathcal I\cup \mathcal J=\{1,\dots,k\},\\ \mathcal I\cap \mathcal J=\emptyset,\\ |\mathcal I|>\frac{k+1}{2}}}\sum_{\substack{\{\alpha_i^*,\beta_i^*\}=\{\alpha_i^*,\beta_i^*\}\ \forall i\in \mathcal I,\\ (\alpha_j^*,\beta_j^*)=(\alpha_j,\beta_j)\ \forall j\in \mathcal J}}\Psi_{\epsilon,\mathcal I}(\bv_{\mathcal I}')\Big(\prod_{i\in \mathcal I}\zeta(1-\alpha_i^*+\beta_i^*)|a_i|^{-1+\alpha_i^*+s} \Big)\\
&\times \frac{G(1-\sum_{i\in \mathcal I}(1-\alpha_i^*-s) )}{1-\sum_{i\in \mathcal I}(1-\alpha_i^*-s) } \sum_{ \ell\geq1}\frac{\prod_{i\in \mathcal I}(a_i,\ell)^{1-\alpha_i^*+\beta_i^*}}{\ell^{\sum_{i\in\mathcal I}(1-\alpha_i^*+\beta_i^*)}}\sumstar_{h\mod \ell}\prod_{j\in \mathcal J} D_{\alpha_j^*,\beta_j^*}(s,\tfrac {ha_j}{\ell})
}
where $\bv'_{\mathcal I}=(1-\alpha_i-s)_{i\in \mathcal I}$ (notice that the inequality~\eqref{cds} ensures the convergence of the series over $\ell$). 

We notice that we can assume that
\es{\label{dsdscd}
1-\tfrac1k+\tfrac\eps k-\tfrac1k\sum_{i=1}^k\min(\Re(\alpha_i),\Re(\beta_i))< 1-\tfrac\eps k-\max_{1\leq i\leq k}\max(\Re(\alpha_i),\Re(\beta_i))
}
so that  the strip for $s$ in~\eqref{cds} is non-empty. Indeed, if~\eqref{dsdscd} doesn't hold then we would have $$-\sum_{i=1}^k\min(\Re(\alpha_i),\Re(\beta_i))>1-\eps-k\max_{1\leq i\leq k}\max(\Re(\alpha_i),\Re(\beta_i))$$ and so  
$\frac23\eta_{\balpha,\bbeta}>1-\eps-k\max_{1\leq i\leq k}|\max(\Re(\alpha_i),\Re(\beta_i))|$ which implies
\est{
(2k-1)\sum_{i=1}^k(|\Re(\alpha_i)|+|\Re(\beta_i)|)&>(2k-1)(1-2\eps)-
(2k^2-k)\max_{1\leq i\leq k}|\max(\Re(\alpha_i),\Re(\beta_i))|\\
&\hspace{-2em}>k-1-\tfrac1{2(k-1)}-\eps(4k-2)-(k+1)\max_{1\leq i\leq k}|\max(\Re(\alpha_i),\Re(\beta_i))|,
}
since $\max_{1\leq i\leq k}|\max(\Re(\alpha_i),\Re(\beta_i))|\leq\frac1{2(k-1)}$. Thus, one also has
\est{
(2k-1)\sum_{i=1}^k(|\Re(\alpha_i)|+|\Re(\beta_i)|)+(k-1)\sum_{i=1}^k|\max(\Re(\alpha_i),\Re(\beta_i))|>k-1-\tfrac1{2(k-1)}-\eps(4k-2)
}
and so
\est{
2k\eta_{\balpha,\bbeta}+(k+1)\sum_{i=1}^k\min(\Re(\alpha_i),\Re(\beta_i))>k-1-\tfrac1{2(k-1)}-\eps(4k-2).
}
In particular, choosing suitably the values of $\eps$ in the two statements, Theorem~\ref{mtmt} is implied by Lemma~\ref{absc} in this case.

Now, by the definition of $A_{\balpha,\bbeta}(s)$, if $\sigma\leq 1-\max_{1\leq i\leq k}\max(\Re(\alpha_i),\Re(\beta_i))$ we have
\est{
\sumdagger_{N_1,\dots,N_k} A_{\balpha,\bbeta}(s)=\mathcal A_{\ba;\balpha,\bbeta}(s),
}
whence we have $\mathcal A_{\ba;\balpha,\bbeta}(s)=\mathcal W_{\balpha,\bbeta}(s)+E^*(s)$ for $(s,\balpha,\bbeta)$ satisfying~\eqref{cds}.
Then, we observe that, since $G(0)=1$, one has that $\mathcal W_{\balpha,\bbeta}(s)-\mathcal M_{\ba,\balpha,\bbeta}(s)$ is holomorphic for $\frac12\leq \sigma\leq 1-\max_{1\leq i\leq k}\max(\Re(\alpha_i),\Re(\beta_i))-\frac \eps k$ and $(\balpha,\bbeta)\in\Omega_k$ (with $\mathcal M_{\ba;\balpha,\bbeta}(s)$ defined in~\eqref{dfm}), and one easily checks that in this region $\mathcal W_{\balpha,\bbeta}(s)-\mathcal M_{\ba,\balpha,\bbeta}(s)$ satisfies the same bound as in~\eqref{bffe}. Thus,  taking $$E(s):=E^*(s)+\mathcal W_{\balpha,\bbeta}(s)-\mathcal M_{\ba,\balpha,\bbeta}(s)$$ we obtain Theorem~\ref{mtmt} for 
$$1-\tfrac{2(1-\eta_{\balpha,\bbeta}-3\eps)}{k+1}\leq \sigma\leq 1-\max_{1\leq i\leq k}\max(\Re(\alpha_i),\Re(\beta_i))-\tfrac \eps k.$$ Finally, by~\eqref{dsdscd} one has that for $\sigma\geq 1-\max_{1\leq i\leq k}\max(\Re(\alpha_i),\Re(\beta_i))-\frac \eps k$ Theorem~\ref{mtmt} is a consequence of Lemma~\ref{absc} and so the proof of Theorem~\ref{mtmt} is complete.

\section*{Acknowledgment}
This paper was completed while the author was visiting the Centre de Recherches Math\'ematiques in Montr\'eal. He wishes to thank this institution for providing an excellent research environment.

\addresses


\begin{thebibliography}{0}

\bibitem[Bag]{Bag}
Bagchi, B. {\it  Nyman, Beurling and Baez-Duarte's Hilbert space reformulation of the Riemann hypothesis}. Proc. Indian Acad. Sci. Math. Sci. 116 (2006), no. 2, 137--146; arxiv math.NT/0607733

\bibitem[Bet15]{Bet15}
Bettin, S. {\it On the distribution of a cotangent sum}. Internat. Math. Res. Research Notices (2015) no. 21, 11419--11432. 

\bibitem[Bet16]{Bet16}
Bettin, S.; {\it On the reciprocity law for the twisted second moment of Dirichlet L-functions}. Trans. Amer. Math. Soc. 368 (2016), 6887--6914.

\bibitem[Bet]{Bet}
Bettin, S. {\it High moments of the Estermann function}, preprint. 

\bibitem[BC13a]{BC13a}
Bettin, S.; Conrey, J.B. {\it A reciprocity formula for a cotangent sum}. Internat. Math. Res. Research Notices, 2013 no. 24, 5709--5726.

\bibitem[BC13b]{BC13b}
Bettin, S.; Conrey, J.B. {\it Period functions and cotangent sums}. Algebra Number Theory 7 (2013), no. 1, 215--242.

\bibitem[Blo]{Blo}
Blomer, V. {\it On triple correlations of divisor functions}, to appear in Bull. Lond. Math. So.

\bibitem[BBa]{BB1}
Blomer, V.; Br\"udern, J. {\it  Counting in hyperbolic spikes: the diophantine analysis of multihomogeneous diagonal equations}, to appear in J. reine angew. Math., arxiv math.NT/1402.1122

\bibitem[BBb]{BB2}
Blomer, V.; Br\"udern, J. {\it   Rational points on the inner product cone via the hyperbola method}. Preprint, arxiv math.NT/1701.07278

\bibitem[dlBB]{dlBB}
de la Bret\`eche R.; Browning T.D, {\it Le probl\`eme des diviseurs pour des formes binaires de degr\'e $4$}. J. reine angew. Math. 646 (2010), 1--44

\bibitem[Bro]{Bro}
Browning, T.D. {\it  The divisor problem for binary cubic forms}.  J. Th\'eor. Nombres Bordeaux 23 (2011), no. 3, 579--602.
  
\bibitem[CK]{CK}
Conrey, B.; Keating J.P. {\it  Moments of zeta and correlations of divisor-sums: V}, preprint.

\bibitem[DI]{DI}
Deshouillers, J.-M.; Iwaniec, H. {\it Kloosterman sums and Fourier coefficients of cusp forms}. Invent. Math. 70 (1982/83), no. 2, 219--288.  

\bibitem[FMT]{FMT}
Franke, J.; Manin, Y.I.; Tschinkel. Y. {\it Rational points of bounded height on Fano varieties.} Inventiones Mathematicae 95, no. 2 (1989): 421--35.

\bibitem[GT]{GT}
Green, B.J.; Tao T.C. {\it Linear equations in primes}, Annals of Math., 171 (2010), No. 3, 1753--1850.

\bibitem[Ing]{Ing}
Ingham, A.E. {\it The distribution of prime numbers}.  Cambridge University Press, Cambridge, 1990. ISBN: 0-521-39789-8 

\bibitem[MR16a]{MR16a}
Maier, H.; Rassias, M.T.; {\it Generalizations of a cotangent sum associated to the Estermann zeta function}. Commun. Contemp. Math. 18 (2016), no. 1, 1550078, 89 pp. 

\bibitem[MR16b]{MR16b}
Maier, H.; Rassias, M.T.; {\it The rate of growth of moments of certain cotangent sums}. Aequationes Mathematicae, 90, no. 3 (2016), 581--595.
 
\bibitem[MT]{MT}
Manin, Yu.I.; Tschinkel, Yu. {\it  Points of bounded height on del Pezzo surfaces.} Compositio Math. 85 (1993), no. 3, 315--332. 

\bibitem[Mat]{Mat}
Matthiesen, L. {\it Correlations of the divisor function}. Proc. Lond. Math. Soc. (3) 104 (2012), no. 4, 827--858. 

\bibitem[Rob]{Rob}
Robbiani, M. {\it On the number of rational points of bounded height on smooth bilinear hypersurfaces in biprojective space}. J. London Math. Soc. 63 (2001), 33--51.

\bibitem[Shi76]{Shi76}
Shintani, T. {\it On evaluation of zeta functions of totally real algebraic number fields at non-positive integers}. J. Fac. Sci. Univ. Tokyo Sect. IA Math. 23 (1976), no. 2, 393--417. 

\bibitem[Shi78]{Shi78}
Shintani, T. {\it On special values of zeta functions of totally real algebraic number fields}. Proc. Int. Congress Math., Helsinki, 1978, 591--597.

\bibitem[Spe]{Spe}
Spencer, C.V. {\it The Manin Conjecture for $x_0y_0+\cdots+x_sy_s=0$}. J. Number Theory 129 (2009), 1505--1521.

\bibitem[Thu]{Thu}
Thunder, J.L. {\it Asymptotic estimates for rational points of bounded height on flag varieties.} Compositio Mathematica 88, no. 2 (1993): 155--86.

\bibitem[Tit]{Tit}
Titchmarsh, E.C. {\it The Theory of the Riemann Zeta-function}. Oxford Science Publications, second edition, 1986.

\bibitem[VW]{VW}
Vaughan, R.C.; Wooley, T.D. {\it On a certain nonary cubic form and related equations.} Duke Math. J. 80 (1995), no. 3, 669--735.
 
\bibitem[You]{You}
Young, M.P. {\it The fourth moment of Dirichlet L-functions}. Ann. of Math. (2) 173 (2011), no. 1, 1--50.


\end{thebibliography}
\end{document}